\documentclass{article}
\usepackage{geometry}
 \geometry{
 a4paper,
 total={170mm,257mm},
 left=20mm,
 top=20mm,
 }
\usepackage[utf8]{inputenc}
\usepackage[english]{babel}
\usepackage{amssymb, amsthm, amsmath}
\usepackage{amsfonts}
\usepackage{booktabs}
\usepackage{cases}

\usepackage{comment}
\usepackage{xcolor}
\usepackage{amssymb}
\usepackage{scrextend}
\usepackage{graphicx}
\usepackage{tikz}
\usepackage{hyperref}
\usepackage[capitalise, noabbrev, nameinlink]{cleveref}
\usepackage{subcaption}

\newcommand{\G}{\mathcal{G}}
\DeclareMathOperator{\tr}{tr}

%%%%%

\newtheorem{thm}{Theorem}[section]
\newtheorem{cor}[thm]{Corollary}
\newtheorem{lem}[thm]{Lemma}
\newtheorem{prop}[thm]{Proposition}

\newtheorem{obs}[thm]{Observation}

\newtheorem{defn}[thm]{Definition}
\newtheorem{ex}[thm]{Example}

\newtheorem*{claim*}{Claim}
\newtheorem{case}{Case}

\newcommand{\abs}[1]{\left\vert#1\right\vert}

\title{
The Strong Spectral Property of Graphs: Graph Operations and Barbell Partitions}
\author{Sarah Allred \thanks{Department of Mathematics, Vanderbilt University, Nashville, TN, USA (sarah.e.allred@vanderbilt.edu)}
\and Emelie Curl
\thanks{Department of Mathematics, Statistics, and Computer Science, Hollins University, Roanoke, VA, USA (curlej@hollins.edu)}
\and Shaun Fallat
\thanks{Department of Mathematics and Statistics, University of Regina, Regina, Saskatchewan, CA (shaun.fallat@uregina.ca)}
\and Shahla Nasserasr \thanks{School of Mathematical Sciences, Rochester Institute of Technology, Rochester, NY (sxnsma@rit.edu)}
\and Houston Schuerger \thanks{Department of Mathematics, Trinity College, Hartford, CT, USA (houston.schuerger@trincoll.edu)}
\and
Ralihe R. Villagr\'{a}n \thanks{Department of Mathematics and Computer Science, Eindhoven University of Technology, The Netherlands (r.r.villagran.olivas@tue.nl)}
\and Prateek K. Vishwakarma \thanks{Department of Mathematics and Statistics, University of Regina, Regina, Saskatchewan, CA (prateek.vishwakarma@uregina.ca)}
}
\date{\today}
\begin{document}
\maketitle
\begin{abstract}
The utility of a matrix satisfying the Strong Spectral Property has been well established particularly in connection with the inverse eigenvalue problem for graphs. More recently the class of graphs in which all associated symmetric matrices possess the Strong Spectral Property (denoted $G^{SSP}$) were studied, and along these lines we aim to study properties of graphs that exhibit a so-called barbell partition. Such a partition is a known impediment to membership in the class $G^{SSP}$. In particular we consider the existence of barbell partitions under various standard and useful graph operations.

\end{abstract}

\noindent {\bf Keywords:} graphs, barbell partitions, Strong Spectral Property, graph operations, eigenvalues.

\noindent {\bf AMS subject classification:} 05C50, 15A29

\section{Introduction}

The {\em inverse eigenvalue problem of a graph (IEPG)} refers to determining all possible spectra of real symmetric matrices whose pattern of nonzero off-diagonal entries is described by the edges of a given graph (see \cite{IEPG2, FH, Fal, HLA2, HLS2022inverse}).  

Studies on the IEPG have focused on topics such as: determining the maximum eigenvalue multiplicity, or equivalently, maximum nullity, or equivalently again, minimum rank of matrices described by the graph.  Computing the maximum multiplicity in general remains unresolved and an active area of research (see \cite{FH, HLA2} for  extensive bibliographies).  More recently, there has been progress on the related question of determining the minimum number of distinct eigenvalues of matrices  described by a given graph \cite{AACF, BFH}.  

Maximum nullity, minimum number of distinct eigenvalues, and other related notions help to resolve instances of the inverse eigenvalue problem for a specific graph or family of graphs, but a general solution is far from known. Recently, new developments building upon a known matrix property called the Strong Arnold Property (see \cite{Arnold71, CdVF, CdV}), known as the Strong Spectral Property (SSP) and the Strong Multiplicity Property (SMP), have been used in connection with the IEPG \cite{BFH,IEPG2} and seem to be promising tools for working on the IEPG in more general terms.

All matrices are real and symmetric; $O$ and $I$ denote  zero and identity matrices of appropriate size, respectively. 
%For an $n\times n$ matrix $M$ and $\alpha,\beta \subseteq \{1,2,\ldots,n\}$,
%the submatrix of $M$ lying in rows indexed by
%$\alpha$ and columns indexed by $\beta$ is denoted
%by $M[\alpha,\beta]$; in the case that $\beta=\{1,2,\ldots,n\}$ this can be denoted by $M[\alpha, :]$, and similarly for $\alpha=\{1,2,\ldots,n\}$.
A symmetric matrix $A$ has the {\it Strong Arnold Property} (or $A$ has the SAP for short)
if the only symmetric matrix $X$ satisfying $A\circ X=O$, $I\circ X=O$ and $AX=O$ is $X=O$ (recall that product $\circ$ is the entry-wise matrix product).  
An $n \times n$ symmetric matrix $A$ satisfies the {\it Strong Multiplicity Property} (or $A$ has the SMP)
provided the only symmetric matrix $X$ satisfying $A\circ X=O$, $I\circ X=O$, $[A,X]=O$, and $\tr(A^iX)=0$ for $i=2,\ldots, n-1$ is $X=O$ \cite[Definition~18 and Remark~19]{BFH}. 
A symmetric matrix $A$ has the {\it Strong Spectral Property} (or $A$ has the SSP)
if the only symmetric matrix $X$ satisfying $A\circ X=O$, $I\circ X=O$ and $[A,X]=O$ is $X=O$ \cite[Definition~8]{BFH}.  It follows from the definitions above that the SSP implies the SMP, and the SMP implies $A+\lambda I$ has the SAP for every real number $\lambda$ (see \cite{BFH,IEPG2}).

The {\em graph} $\G(A)$ of a real symmetric $n\times n$ matrix $A=[a_{ij}]$  is the (simple, undirected, finite) graph with vertices $\{1,\dots,n\}$ and edges $ij$ such that $i\ne j$ and $a_{ij} \neq 0$.   For a  graph $G=(V,E)$ with  vertex set $V=\{1,\dots,n\}$ and edge set $E$, 
the {\em set of symmetric matrices described by $G$}, $\mathcal{S}(G)$,  is the set of all real symmetric $n \times n$ matrices $A=[a_{ij}]$  such that  $\G(A)=G$.  The IEPG for $G$ asks for the determination of all possible spectra of matrices in $\mathcal{S}(G)$. The number of distinct eigenvalues of $A$ is denoted by $q(A)$, and   $q(G) = \min\{q(A)\; \mbox{:}\; A \in \mathcal{S}(G)\}$. 
%For any square matrix $A$, we let $\sigma(A)$ denote the multiset consisting of the  spectrum of $A$. \textcolor{blue}{I don't think we use $\sigma(A)$ throughout the paper.}

Given a graph $G=(V,E)$, and $S\subset V$, we let $N_{G}(S)$ denote the set of all vertices adjacent to some vertex in $S$. In particular, if $v$ is a vertex of a graph $G$, the {\em neighborhood} of $v$ is the set of vertices adjacent to $v$, and is denoted by $N_G(v)$. Further, the degree of $v$, denoted by $\deg(v)$, is equal to $|N_G(v)|$. If $S \subseteq V$, then we let $G[S]$ denote the subgraph of $G$ induced by the vertices in $G$ (sometimes referred to as the vertex induced subgraph of $G$). An edge $e$ in $G$ with end points $u$ and $v$ is denoted by $e=\{u,v\}$ of for brevity we may just write $uv\in E(G)$. We let $G \pm e$ denote the graph obtained from $G$ by adding a new edge $e$ or by removing the existing edge $e$ from $G$. Similarly, if $S\subseteq V$, we let $G-S$ denote the induced subgraph of $G$ obtained by removing $S$ from $V$. 
%\textcolor{blue}{Sometimes $\setminus$ is used}. 
If $S=\{v\}$, we use abbreviation $G-v$ to denote the graph obtained from $G$ by removing the vertex $v$. If $G$ and $H$ are two graphs, then the join of $G$ and $H$, denoted by $G \vee H$, is the graph obtained from the disjoint union of $G$ and $H$ and adding all possible edges between the vertices in $G$ and the vertices in $H$. Finally, as is standard, we let $K_n$ and $P_n$, $n\geq 1$, and $C_n$, $n\geq 3$, denote the complete graph, the path graph, and the cycle on $n$ vertices, respectively. 

Suppose $G$ is a given graph and $v$ is a fixed vertex in $G$. The notion of duplicating (or cloning) a vertex is a natural graph operation and has interesting implications on the inverse eigenvalue problem for graphs (see \cite{AAB, LOS}). There are two versions of duplicating a vertex, namely with an edge or without an edge. The graph $jdup(G,v)$ ($dup(G,v)$) is the graph obtained from $G$ by adding a new vertex $u$ and connecting $u$ to all the vertices in $N_{G}(v)$ and $v$ (connecting $u$ to all the vertices in $N_{G}(v)$).

The main focus of this work is to study when the SSP is preserved under certain standard graph operations. One inherent flaw with this approach is that the SSP is a matrix property and not necessarily a graph property. As such, we are very interested in the graphs $G$ for which the SSP holds for all  $A \in \mathcal{S}(G)$. This class is denoted by $G^{SSP}$ and was introduced in \cite{SSPforGs}. In \cite{SSPforGs}, the concept of a barbell partition was noted and a connection to the complement of the class $G^{SSP}$ was established. In this paper, we work with barbell partitions extensively and work out numerous relationships regarding barbell partitions and standard graph operations. Such analysis leads to a better understanding of graphs that do not belong to the set $G^{SSP}$. In section 2 of this work we recall the definition of barbell partitions and verify classes of graphs that exhibit barbell partitions. In the remaining sections (sections 3-5) we discuss the preservation of barbell partitions under various graph operations and graph products.

Concerning graphs $G$ and the class $G^{SSP}$, it is known that the cycle on 4 or more vertices does not belong to the class $G^{SSP}$. On the other hand it is also known (see \cite{FHLS}) that for the 4-cycle any realizable multiplicity list can be realized by a matrix with the SSP. However, for any $n$-cycle it is still not clear if every realizable multiplicity list can be realized by a matrix with the SSP. As a matter of pushing the narrative a bit further we end the introduction with an example of a class of matrices in $S(C_n)$ with $n$ even that possess the SSP.

\begin{prop}
For $A \in \mathcal{S}(C_n)$, if $n$ is even and $0 \neq \lambda = a_{11} = \cdots = a_{\frac{n}{2}\frac{n}{2}}$, $0 \neq -\lambda = a_{\frac{n}{2}+1 \frac{n}{2}+1} = \cdots = a_{nn}$, and all other entries in $A$ are a common value $b \neq 0$, then $A$ has the SSP. 
\end{prop}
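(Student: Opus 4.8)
The plan is to translate the SSP directly into the commutator equation and then extract enough linear constraints from the special diagonal/edge structure to force any candidate commuting matrix to vanish. Write $A = D + bB$, where $D=\mathrm{diag}(d_1,\dots,d_n)$ with $d_i=\lambda$ for $1\le i\le n/2$ and $d_i=-\lambda$ otherwise, and $B$ is the adjacency matrix of $C_n$ (all indices read modulo $n$). Suppose $X=[x_{ij}]$ is symmetric with $A\circ X=O$, $I\circ X=O$, and $[A,X]=O$. The first two conditions say exactly that $x_{ij}=0$ whenever $i=j$ or $ij\in E(C_n)$, so $X$ is supported on the non-edges of $C_n$, and the goal is to deduce $X=O$. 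Reading off the $(i,j)$ entry of $[A,X]=O$ gives the scalar recurrence
\[
(d_i-d_j)\,x_{ij}+b\,(x_{i-1,j}+x_{i+1,j}-x_{i,j-1}-x_{i,j+1})=0 \qquad \text{for all } i,j.
\]

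It is convenient to record the entries along each diagonal by $y^{(k)}_i:=x_{i,i+k}$; symmetry gives $y^{(k)}_i=y^{(n-k)}_{i+k}$, while the pattern conditions read $y^{(0)}\equiv y^{(1)}\equiv 0$. In these coordinates the recurrence becomes
\[
(d_i-d_{i+k})\,y^{(k)}_i+b\,\bigl(y^{(k+1)}_{i-1}+y^{(k-1)}_{i+1}-y^{(k-1)}_i-y^{(k+1)}_i\bigr)=0,
\]
which couples three consecutive diagonals. First I would run this at $k=1$: since $y^{(0)}\equiv y^{(1)}\equiv 0$ it collapses to $y^{(2)}_{i}=y^{(2)}_{i-1}$ for every $i$, so all distance-two entries share a common value $c_2$. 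Solving the general recurrence for the top diagonal yields $y^{(k+1)}_i-y^{(k+1)}_{i-1}=\tfrac1b(d_i-d_{i+k})\,y^{(k)}_i+y^{(k-1)}_{i+1}-y^{(k-1)}_i$, so each diagonal is determined by the two below it up to an additive constant, subject to the solvability condition $\sum_i(d_i-d_{i+k})\,y^{(k)}_i=0$ obtained by summing around the cycle.

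The heart of the argument is to propagate these relations upward and close the loop, and the even-ness of $n$ enters twice. It makes $C_n$ bipartite, which keeps the even- and odd-distance diagonals from interfering, and it produces the middle diagonal $k=n/2$, on which the symmetry $y^{(k)}_i=y^{(n-k)}_{i+k}$ forces the periodicity $y^{(n/2)}_{i+n/2}=y^{(n/2)}_i$. The sign pattern enters through the coefficients $d_i-d_{i+k}$: the vector $(d_i)$ has minimal period exactly $n$, so for every $1\le k\le n-1$ there is an index with $d_i\neq d_{i+k}$, and for $k=n/2$ one has $d_i-d_{i+n/2}=2d_i\neq0$ for all $i$. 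The closing step is to show that the additive constants accumulated by the upward recurrence are incompatible with the middle-diagonal periodicity unless every diagonal is constant, whence the solvability conditions $(d_i-d_{i+k})c_k=0$ with a nonvanishing coefficient force $c_k=0$. In the base case $C_4$ this is immediate, since the recurrence at the two antipodal positions gives $2\lambda x_{13}=2\lambda x_{24}=0$; for $C_6$, comparing the level-two relations at the two sign-changing indices already yields $2\lambda c_2/b=-2\lambda c_2/b$, so $c_2=0$ and then $X=O$.

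I expect the main obstacle to be exactly this bookkeeping: controlling how the additive constants cascade along the chain of diagonals and verifying that the solvability conditions together with the middle-diagonal periodicity kill $c_2,c_3,\dots$ in turn. A cleaner alternative worth trying is to exploit the signed permutation $Q=SP$, where $P$ is rotation by $n/2$ and $S=\mathrm{diag}((-1)^i)$: a direct computation gives $QAQ^{-1}=-A$, from which $X\mapsto QXQ^{-1}$ is seen to preserve both the commutant of $A$ and the non-edge support, so the solution space is $Q$-invariant. The hope is that combining this involution with a reflection symmetry of $C_n$ collapses the space to $\{O\}$ without the explicit accumulation, giving a more conceptual route to the same conclusion.
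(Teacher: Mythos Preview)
Your approach is essentially the same as the paper's: read off the commutator $[A,X]=O$ entry by entry, first on the edge positions to force all distance-two entries equal, then propagate outward along successive diagonals. The paper's own proof is in fact brisker than yours---after observing that the distance-two entries agree it simply asserts that ``considering the next two diagonals \dots yields $0=x_{13}=\cdots$'' and then ``continuing the argument in this manner will show that $X=0$''---so your more explicit recurrence $y^{(k+1)}_i-y^{(k+1)}_{i-1}=\tfrac{1}{b}(d_i-d_{i+k})y^{(k)}_i+y^{(k-1)}_{i+1}-y^{(k-1)}_i$ together with the cyclic solvability condition and the middle-diagonal periodicity actually articulates what the paper leaves implicit. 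The bookkeeping you flag as the main obstacle is exactly the step the paper elides; there is no additional idea in the paper's argument that you are missing.
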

\begin{proof}
Suppose $A \in S(C_n)$ such that $n$ is even and 
\[A = \begin{bmatrix}
\lambda & b & 0 & 0 & 0 & \cdots & 0 & \cdots & 0 & b\\
b & \lambda & b & 0 & 0 & \cdots & 0 & \cdots & 0 & 0\\
0 & b & \lambda & b & 0 & \cdots & 0 & \cdots & 0 & 0\\
\vdots & \ddots & \ddots & \ddots & \ddots & \cdots & \vdots & \cdots & \vdots & \vdots\\
0 & 0 & \cdots & b &  \lambda & b & 0 & \cdots & 0 & 0\\
0 & 0 & \cdots & 0 & b & -\lambda & b & \cdots & 0 & 0\\
\vdots & \vdots & \cdots & \vdots & \ddots & \ddots & \ddots & \ddots & \vdots & \vdots\\
0 & \cdots & \cdots & \cdots  & \cdots &  0 & b & -\lambda & b & 0\\
0 & \cdots & \cdots & \cdots & \cdots &  \cdots  &  0 & b & -\lambda & b\\
b & \cdots & \cdots & \cdots & \cdots & \cdots & \cdots & 0 & b & -\lambda\\
\end{bmatrix}\]
such that $\lambda$ and $b$ are real numbers such that $\lambda, b \neq 0$.  Consider an $n \times n$ real symmetric matrix $X=[x_{ij}]$ such that: (1) $A \circ X = O$, (2) $X \circ I = O$, and (3) $AX = XA$. Then $XA = AX$, yields the equations $bx_{13} = \cdots = bx_{kk+2} = bx_{k+1 k -1} = \cdots = bx_{n2}$ along the super and sub diagonals and in the entry in the $(1,n)$ and $(n,1)$ positions. Since $b \neq 0$, this yields that 
$x_{13} = \cdots = x_{k k+2} = x_{k+1 k - 1} = \cdots = x_{n2}$. Considering the next two diagonals and entries in positions $(1,n-1)$, $(2,n)$, $(n-1,1)$, and $(n,2)$ yields the equations $0 = x_{13} = \ldots = x_{k k+2} = x_{k+1 k -1} = \ldots = x_{n2}$. Continuing the argument in this manner will show that $X=0$ and hence $A$ has the SSP.
\end{proof}

%\begin{conj}
%For $A \in \mathcal{S}(C_n)$ in order for $A$ to possess the SSP, one of the following conditions must be met
%\begin{itemize}
%    \item[1.] If $n$ is even, $0 \neq \lambda = a_{11} = \cdots = a_{\frac{n}{2}\frac{n}{2}}$, $0 \neq -\lambda = a_{\frac{n}{2}+ 1\frac{n}{2} + 1} = \cdots = a_{nn}$ (half the diagonal entries are the same and the other half have opposite sign) and $|b_{34}| = |b_{56}| = |b_{89}| = |b_{i+2 i+2}| \cdots \neq |b_{12}| = |b_{45}| = |b_{78}| = |b_{i+3 i+3}|$ (the super or sub diagonal entries alternate in absolute value) 
 %   \item[2.]If $n$ is odd, .... ?  
%\end{itemize}
%\end{conj}

\section{Barbell Partitions}

We now turn our attention to the interesting class of graphs that do not  belong to $G^{SSP}$. 
%By definition any graph operation closed within this class necessarily will correspond to a graph operation that preserves the SSP. 
In connection with the class of graphs that do not belong to $G^{SSP}$ (see \cite{SSPforGs}), we consider the notion of a barbell partition of the vertex set. For our purposes, given a set $U$ we will consider a partition of $U$ to be a collection of pairwise disjoint subsets of $U$ such that their union is $U$. The subsets can be empty. The concept of barbell partition is defined in \cite{SSPforGs}. 

\begin{defn}
A barbell partition of a graph $G$ is a partition of $V(G)$ into three disjoint parts $\{R,W_1,W_2\}$ such that:
\begin{enumerate}
    \item $R$ is allowed to be an empty set, but $W_i \neq \emptyset$ for $i \in \{1,2\}$
    \item there are no edges between vertices in $W_1$ and vertices in $W_2$
    \item for each $v \in R$, $\left\vert N_G(v) \cap W_i \right\vert \neq 1$ for $i \in \{1,2\}$.
\end{enumerate}
\end{defn}

\noindent It was then shown that if a graph $G$ has a barbell partition, then $G \not \in  G^{SSP}$.

\begin{lem} \cite{SSPforGs}
Let G be a graph with a barbell partition. Then
there is a matrix $M \in \mathcal{S}(G)$ such that $M$ does not have the SAP (and the SSP).
\end{lem}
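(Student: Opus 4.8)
The plan is to exhibit a single nonzero symmetric matrix $X$ that witnesses the failure of both properties for a suitable $M \in \mathcal{S}(G)$. Since $M$ and $X$ are symmetric, $MX = O$ forces $XM = (MX)^\top = O$, hence $[M,X] = O$; thus any $X \neq O$ with $M \circ X = O$, $I \circ X = O$, and $MX = O$ simultaneously certifies that $M$ has neither the SAP nor the SSP. To produce such an $X$, I would take $w_1, w_2 \in \mathbb{R}^{|V(G)|}$ with $w_i$ supported on $W_i$, and set $X = w_1 w_2^\top + w_2 w_1^\top$. Because $W_1$ and $W_2$ are disjoint, the diagonal of $X$ vanishes, giving $I \circ X = O$; because (by condition 2) there are no edges between $W_1$ and $W_2$, the off-diagonal support of $X$ lies entirely in the $W_1$--$W_2$ block, where $M$ is forced to be zero, giving $M \circ X = O$. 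Choosing $w_1, w_2 \neq 0$ makes $X \neq O$. Finally, since $w_1$ and $w_2$ have disjoint supports, $MX = (Mw_1)w_2^\top + (Mw_2)w_1^\top = O$ holds if and only if $Mw_1 = 0$ and $Mw_2 = 0$, so the whole problem reduces to constructing $M \in \mathcal{S}(G)$ having $w_1$ and $w_2$ in its kernel.

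For the construction I would order the vertices as $W_1, W_2, R$ and choose each $w_i$ to be nonzero in every coordinate of $W_i$. Writing $M$ in the corresponding block form, the $W_1$--$W_2$ block is $O$ automatically, and the block $M[R,R]$ never meets $w_1$ or $w_2$ (both vanish on $R$), so it may be filled in arbitrarily subject to realizing $G[R]$. The condition $Mw_1 = 0$ decomposes coordinate-by-coordinate: for $v \in W_2$ it is automatic (no $W_1$--$W_2$ edges); for $v \in W_1$ the row reads $M_{vv}w_1(v) + \sum_{u \in N_G(v) \cap W_1} M_{vu} w_1(u) = 0$, which I solve for the free diagonal entry $M_{vv}$ after setting the within-$W_1$ edge weights to arbitrary nonzero values (legitimate since $w_1(v) \neq 0$); and for $v \in R$ the row reads $\sum_{u \in N_G(v) \cap W_1} M_{vu} w_1(u) = 0$, involving only the $R$--$W_1$ edge weights. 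The block $M[W_2,W_2]$ and the $R$--$W_2$ edges are handled identically for $Mw_2 = 0$, and crucially these entries are disjoint from those used for $Mw_1 = 0$.

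The heart of the argument, and the step where condition 3 of the barbell partition is indispensable, is the $R$-row requirement $\sum_{u \in N_G(v) \cap W_i} M_{vu} w_i(u) = 0$ in which every $M_{vu}$ must be nonzero (it is an edge weight). When $|N_G(v) \cap W_i| = 0$ the sum is empty and nothing is required; when $|N_G(v) \cap W_i| \geq 2$ there are at least two nonzero values $w_i(u)$ among which I must distribute nonzero coefficients $M_{vu}$ that make the weighted sum vanish, and this is always possible, since the hyperplane $\{c : \sum_u c_u w_i(u) = 0\}$ is not contained in any coordinate hyperplane and hence contains a point with all coordinates nonzero. The case $|N_G(v)\cap W_i| = 1$ is precisely the obstruction: a single term $M_{vu} w_i(u)$ with $M_{vu} \neq 0$ and $w_i(u) \neq 0$ can never be zero. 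This is exactly why the definition forbids $|N_G(v) \cap W_i| = 1$, and I expect the only delicate bookkeeping to be checking that the within-$W_i$, $R$--$W_i$, and diagonal choices can be made in sequence without any one disturbing the others, which the disjointness noted above guarantees. Assembling these choices yields $M \in \mathcal{S}(G)$ with $Mw_1 = Mw_2 = 0$, completing the proof.
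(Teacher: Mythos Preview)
The paper does not supply its own proof of this lemma; it is quoted verbatim from \cite{SSPforGs} and used as a black box. There is therefore nothing in the present paper to compare your argument against directly.

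That said, your proof is correct and is exactly the standard construction one would expect (and, in fact, the one given in \cite{SSPforGs}). The rank-two ansatz $X = w_1 w_2^\top + w_2 w_1^\top$ with $w_i$ supported and nowhere-vanishing on $W_i$ is the natural choice: disjointness of $W_1,W_2$ kills the diagonal, absence of $W_1$--$W_2$ edges kills $M\circ X$, and linear independence of $w_1,w_2$ reduces $MX=O$ to $Mw_1=Mw_2=0$. Your block-by-block construction of $M$ is sound; the key observation that the $R$--$W_1$, $R$--$W_2$, within-$W_1$, within-$W_2$, and diagonal constraints are decoupled is what makes the bookkeeping go through, and you have identified it correctly. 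The use of condition~3 to guarantee a nowhere-zero solution to $\sum_{u} M_{vu} w_i(u)=0$ when there are at least two summands is the crux, and your hyperplane argument handles it.
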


It thus follows that any graph with a barbell partition is not a member of $G^{SSP}$, $G^{SAP}$, or $G^{SMP}$.  Our results concerning barbell partitions will often conclude that a graph is not a member of $G^{SSP}$.  However, in each such case, it can also be said that the graph in question is neither a member of $G^{SAP}$ nor of $G^{SMP}$. We observe, for completeness, that the converse to the previous lemma need not hold in general. Consider the cycle on 4 vertices with two additional pendant vertices adjacent to non-adjacent vertices on this 4-cycle. This graph is not in $G^{SSP}$ and does not have a barbell partition (see also \cite{SSPforGs}). 

As a result, it becomes natural to discuss barbell partitions when considering SSP.  To this end, the remaining sections will focus on classes of graphs which have barbell partitions and graph operations which preserve the presence of barbell partitions or in some cases introduce barbell partitions.  Furthermore, in \cite{fort} the concept of a fort was introduced and defined to be a subset of a graph's vertices such that no vertex not in the fort is adjacent to exactly one vertex in the fort.  For the purposes of this paper, we will follow the convention that every nontrivial graph $G$ has a fort, specifically $V(G)$, with the criterion in the definition being understood to be vacuously true in this case.  The following theorem establishing the connection between forts and zero forcing (see \cite{HLA2} for more details about zero forcing) follows from \cite[Thm. 3]{fort}.

\begin{thm}\label{fort}\cite{fort}
Let $G$ be a graph and $S \subseteq V(G)$.  Then $V(G) - S$ is a zero forcing set of $G$ if and only if $S$ does not contain a fort.
\end{thm}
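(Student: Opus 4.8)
The plan is to prove the logically equivalent reformulation that $Z := V(G) - S$ is a zero forcing set if and only if $Z$ meets every fort of $G$. This is the same as the stated claim, because a fort $F$ is disjoint from $Z$ exactly when $F \subseteq S$; hence ``$Z$ meets every fort'' is precisely ``$S$ contains no fort.'' (Throughout I take forts to be nonempty, consistent with the convention that $V(G)$ is the canonical fort of a nontrivial graph, since the empty set vacuously satisfies the fort condition and would trivialize the statement.) I would establish both implications by contraposition, in each case passing between the forcing process and the fort condition through the color-change rule.

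For the direction ``$S$ contains a fort $\Rightarrow$ $V(G)-S$ is not a zero forcing set,'' suppose $F \subseteq S$ is a fort, so $F$ is disjoint from $Z$ and every vertex of $F$ starts white. The key claim is that no vertex of $F$ is ever recolored. I would prove this by looking at the first step at which some $w \in F$ is forced, say by a blue vertex $u$. Since $F \cap Z = \emptyset$ and no fort vertex has been colored before this step, every vertex of $F$ is still white; in particular $u \notin F$. Because $u$ forces $w$, the vertex $w$ is the unique white neighbor of $u$, so $w \in N_G(u) \cap F$. The fort condition gives $\abs{N_G(u) \cap F} \neq 1$, so $u$ has a second neighbor $w' \in F$ with $w' \neq w$; but $w'$ is still white, so $u$ has two white neighbors, contradicting that it forces $w$. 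Hence the nonempty set $F$ stays white forever and $Z$ is not a zero forcing set.

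For the reverse direction, I would start from $Z$ and apply the color-change rule until it stalls, obtaining the zero forcing closure $B \supseteq Z$. If $Z$ is not a zero forcing set then $B \neq V(G)$, so $F := V(G) - B$ is nonempty and disjoint from $Z$, whence $F \subseteq S$. It remains to verify that $F$ is a fort. Suppose some $u \notin F$ (that is, $u \in B$) had exactly one neighbor in $F$; that neighbor would be the unique white neighbor of the blue vertex $u$, so $u$ could force it, contradicting that $B$ is closed under the color-change rule. Therefore $\abs{N_G(u) \cap F} \neq 1$ for every $u \notin F$, so $F$ is a fort contained in $S$.

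I expect the main obstacle to be the bookkeeping in the forward direction: one must argue carefully that at the \emph{first} moment a fort vertex is forced, all fort vertices are simultaneously still white, so that the second fort-neighbor supplied by the condition $\abs{N_G(u) \cap F} \neq 1$ is genuinely a white neighbor of $u$. Formalizing this ``first forcing step'' cleanly—rather than the nonemptiness convention, which is a minor matter—is where the argument demands the most care.
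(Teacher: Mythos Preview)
Your argument is correct and is the standard proof of this equivalence. Note, however, that the paper does not actually supply a proof of this theorem: it is quoted from \cite{fort} (specifically, the authors remark that it ``follows from \cite[Thm.~3]{fort}''), so there is no in-paper proof to compare against. Your write-up would serve perfectly well as a self-contained justification, and the bookkeeping you flag in the forward direction---arguing at the \emph{first} forcing step so that every fort vertex is still white---is handled cleanly.
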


The definition and results concerning forts are worth noting because while the original definition of barbell partitions does not mention forts, one can define barbell partitions utilizing the concept of forts thus creating a connection between the two areas of study.  With this in mind, we first introduce the concept of a pair of separated forts and then identify that this concept can provide an equivalent definition for barbell partitions.

\begin{defn}
Let $G$ be a graph.  If $W_1$ and $W_2$ are disjoint nonempty forts in $G$ and no vertex in $W_1$ is adjacent to a vertex in $W_2$, then we say that $\{W_1,W_2\}$ is a pair of separated forts.
\end{defn}

\begin{obs}\label{fortbarb}
Let $G$ be a graph.  Then $\{V(G) \setminus (W_1 \cup W_2),W_1,W_2\}$ is a barbell partition of $G$ if and only if $\{W_1,W_2\}$ is a pair of separated forts of $G$.
\end{obs}

We begin our discussion of barbell partitions by providing a list of small observations which will prove useful later on.

\begin{obs}\label{discon}
If a graph is disconnected, then there is a barbell partition $W_1=H_1$, and $W_2=V(G)\setminus H_1$, and $R=\emptyset.$
\end{obs}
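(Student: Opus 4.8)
The plan is to verify the three defining conditions of a barbell partition directly, using the fact that a disconnected graph splits into components with no edges between them. Let $G$ be disconnected, and let $H_1$ be the vertex set of one connected component (or any nonempty union of components that is a proper subset of $V(G)$). I would set $W_1 = H_1$, $W_2 = V(G) \setminus H_1$, and $R = \emptyset$, exactly as in the statement.

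First I would check condition (1): since $G$ is disconnected, $H_1$ is a nonempty proper subset of $V(G)$, so both $W_1 = H_1$ and $W_2 = V(G) \setminus H_1$ are nonempty; and $R = \emptyset$ is explicitly permitted by the definition. Next I would verify condition (2): because $H_1$ is a union of connected components, there are no edges joining a vertex of $H_1$ to a vertex outside $H_1$, which is precisely the statement that there are no edges between $W_1$ and $W_2$. Finally, condition (3) is satisfied vacuously, since $R = \emptyset$ means there are no vertices $v \in R$ to check.

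There is essentially no obstacle here; the result is an immediate consequence of the definitions, and the only mild point worth stating carefully is that $H_1$ should be chosen as a union of components (most naturally, a single component), so that condition (2) holds. I would simply remark that taking $H_1$ to be the vertex set of any connected component of $G$ makes all three conditions transparent.
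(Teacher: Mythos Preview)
Your proposal is correct and follows exactly the natural direct verification the paper intends: the observation is stated without proof in the paper (it is left as self-evident), and your argument---choosing $H_1$ to be a component, checking that $W_1,W_2$ are nonempty, that no edges cross between them, and that condition (3) is vacuous since $R=\emptyset$---is precisely the routine check that justifies it.
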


\begin{obs}
If $G$ is a graph such that
\begin{itemize}
    \item $S$ is a cut-set of $G$
    \item $\mathcal H$ is the collection of components of $G-S$ and
    \item for $H \in \mathcal H$,
    \[v \in N_G\left(V(H)\right) \cap S \Longrightarrow \left\vert N_G(v) \cap V(H) \right\vert \geq 2,\]
\end{itemize}
then $G$ has a barbell partition with $R=S$, $W_1=V(H')$ for some $H' \in \mathcal H$, and $W_2=\left(\bigcup_{H \in \mathcal H}V(H)\right)\setminus V(H')$, and thus must not be in $ G^{SSP}$.
\end{obs}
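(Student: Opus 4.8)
The plan is to verify directly that the proposed triple $\{R, W_1, W_2\}$ satisfies the three defining conditions of a barbell partition, and then to invoke the preceding Lemma to conclude $G \notin G^{SSP}$. First I would confirm condition (1), that the three parts genuinely partition $V(G)$ with $W_1$ and $W_2$ nonempty. By construction $R = S$, $W_1 = V(H')$, and $W_2 = \left(\bigcup_{H \in \mathcal{H}} V(H)\right) \setminus V(H')$ are pairwise disjoint with union $V(G)$, since $\mathcal{H}$ is exactly the set of components of $G - S$. Because $S$ is a cut-set, $G - S$ is disconnected, so $\mathcal{H}$ contains at least two components; hence $W_1 = V(H')$ is nonempty, and $W_2$ contains the vertex set of some component different from $H'$ and so is nonempty as well.

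For condition (2), I would observe that $W_1$ and $W_2$ are unions of distinct components of $G - S$. Any edge of $G$ with one endpoint in $W_1$ and the other in $W_2$ would have both endpoints outside $S$, hence would be an edge of $G - S$ joining two different components, which is impossible. So there are no edges between $W_1$ and $W_2$.

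The crux is condition (3): for each $v \in R = S$ I must show $\abs{N_G(v) \cap W_i} \neq 1$ for $i \in \{1, 2\}$. For $W_1 = V(H')$ this is immediate from the hypothesis: either $v \in N_G(V(H'))$, in which case $\abs{N_G(v) \cap V(H')} \geq 2$, or $v \notin N_G(V(H'))$, in which case $\abs{N_G(v) \cap V(H')} = 0$; neither value equals $1$. For $W_2$, the step I expect to require the most care is the decomposition $N_G(v) \cap W_2 = \bigcup_{H \in \mathcal{H},\, H \neq H'} \left(N_G(v) \cap V(H)\right)$ as a disjoint union over components. By the hypothesis applied to each such $H$, each term $\abs{N_G(v) \cap V(H)}$ is either $0$ (when $v \notin N_G(V(H))$) or at least $2$; since a sum of nonnegative integers each of which is $0$ or at least $2$ can never equal $1$, we obtain $\abs{N_G(v) \cap W_2} \neq 1$.

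Having confirmed all three conditions, $\{R, W_1, W_2\}$ is a barbell partition of $G$, and by the preceding Lemma every graph admitting a barbell partition fails to lie in $G^{SSP}$, which yields the stated conclusion.
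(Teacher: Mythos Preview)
Your proof is correct and is precisely the direct verification the paper has in mind; the paper states this result as an observation without proof, and your argument---checking each of the three barbell conditions, with the disjoint-union decomposition over components to handle $W_2$---is the natural way to fill in the details.
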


In \cite{SSPforGs} the following results concerning barbell partitions, trees, and unicyclic graphs were proven.

\begin{cor}\cite{SSPforGs}
If a tree $T$ has a vertex $v$ with $\deg(v) \geq 4$ or two vertices $u,v$ with $\deg(u),\deg(v) \geq 3$, then $T$ admits a barbell partition. Hence $T \not \in  G^{SSP}$ .
\end{cor}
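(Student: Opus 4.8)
The plan is to reduce the statement to producing a \emph{pair of separated forts} and then invoke Observation~\ref{fortbarb}, which converts such a pair directly into a barbell partition; the final clause $T \notin G^{SSP}$ is then immediate from the barbell lemma stated above. The single fact that drives both cases is an observation about trees that I would record first: if $x$ is a vertex of $T$, then the components of $T-x$ (the \emph{branches} at $x$) number exactly $\deg(x)$, and the union $U$ of any collection of at least two of these branches is a fort. Indeed, each branch is joined to the rest of $T$ solely through $x$, so the only vertex of $V(T)\setminus U$ that can be adjacent to a vertex of $U$ is $x$ itself; moreover $x$ is adjacent to its neighbor lying in each chosen branch, giving $\left\vert N_T(x)\cap U\right\vert \ge 2$. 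Every other external vertex has no neighbor in $U$ at all, so no vertex outside $U$ meets $U$ in exactly one vertex, confirming the fort condition. Each such $U$ is nonempty since every branch contains a neighbor of $x$.

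For the first case, suppose $\deg(v)\ge 4$, so $v$ has branches $B_1,\dots,B_k$ with $k\ge 4$. I would set $W_1=V(B_1)\cup V(B_2)$ and $W_2=V(B_3)\cup V(B_4)$. By the building block each $W_i$ is a fort, they are disjoint since formed from distinct branches, and there is no edge between them because any edge leaving a branch at $v$ must terminate at $v\notin W_1\cup W_2$. For the second case, suppose $u\ne v$ with $\deg(u),\deg(v)\ge 3$. Exactly one branch at $u$ contains $v$, so at least $\deg(u)-1\ge 2$ branches at $u$ avoid $v$; let $W_1$ be the union of two of them, and symmetrically let $W_2$ be the union of two branches at $v$ avoiding $u$. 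Again each is a fort by the building block.

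The step I expect to require the most care is verifying, in the second case, that $W_1$ and $W_2$ are disjoint and mutually non-adjacent, since here the tree structure is essential. A vertex in a branch at $u$ avoiding $v$ reaches $v$ only through $u$, whereas a vertex in a branch at $v$ avoiding $u$ reaches $u$ only through $v$; a common vertex would force one path to $v$ passing through $u$ and another to $u$ passing through $v$, contradicting the uniqueness of paths in a tree. The same uniqueness rules out any edge between $W_1$ and $W_2$, as such an edge would yield a $u$--$v$ walk bypassing the unique $u$--$v$ path. Once disjointness and separation are established, $\{W_1,W_2\}$ is a pair of separated forts in both cases, and Observation~\ref{fortbarb} produces the barbell partition $\{V(T)\setminus(W_1\cup W_2),W_1,W_2\}$, whence $T\notin G^{SSP}$.
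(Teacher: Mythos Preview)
Your argument is correct. The paper itself does not supply a proof of this corollary; it is imported from \cite{SSPforGs} and stated without justification. Your write-up therefore fills a genuine gap, and it does so cleanly by exploiting the paper's own fort machinery (Observation~\ref{fortbarb}) rather than constructing the barbell partition from scratch.

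A couple of minor remarks on presentation. In Case~2, your disjointness argument is sound but could be stated more crisply: if $w\in W_1$, the unique $w$--$v$ path in $T$ passes through $u$, so $d(w,v)=d(w,u)+d(u,v)$; if also $w\in W_2$, the unique $w$--$u$ path passes through $v$, giving $d(w,u)=d(w,v)+d(v,u)$; summing forces $d(u,v)=0$, contradicting $u\neq v$. For the non-adjacency, once disjointness is in hand you can simply note that any neighbor of $w_1\in W_1$ lies in $W_1\cup\{u\}$, and $u\notin W_2$ since $W_2$ is built from branches at $v$ that avoid $u$. This is essentially what you wrote, just with the dependence on disjointness made explicit.
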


%\textcolor{blue}{statements like "(admits a barbell partition)" or $G \not \in  G^{SSP}$ appear in the statements of this section but not later. Should we delete the ones here?}

\begin{cor}\cite{SSPforGs}
If a unicyclic graph $G$ has a vertex $v$ such that $\deg(v) \geq 4$, or a vertex $u$ not contained in the cycle with $\deg(u) \geq 3$, then $G$ admists a barbell partition. Hence $G \not \in G^{SSP}$ .
\end{cor}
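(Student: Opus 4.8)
The plan is to prove the statement in its fort formulation: by \Cref{fortbarb} it suffices to produce a pair of separated forts, i.e.\ two disjoint nonempty forts of $G$ with no edge between them. Before splitting into cases I would record the structural effect of deleting a vertex $w$ that does \emph{not} lie on the unique cycle $C$. Since a $w$-avoiding path between two neighbors of $w$ would, together with $w$, close a second cycle, no two neighbors of $w$ survive in the same component of $G-w$; hence $G-w$ has exactly $\deg(w)$ components, each containing a single neighbor of $w$. Exactly one of them, call it $H_0$, contains $C$, and being a connected subgraph with exactly one cycle it is again unicyclic; every other component is a tree. This is the only structural input I need.

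The first case is when the high-degree witness is off the cycle, which covers the hypothesis $\deg(u)\ge 3$ with $u\notin C$ as well as any off-cycle vertex of degree at least $4$. Write $u_0$ for the unique neighbor of $u$ inside $H_0$ and let $W_1$ be the union of the $\deg(u)-1\ge 2$ tree-components of $G-u$. Then $W_1$ is a fort of $G$ essentially for free: the only vertex outside $W_1$ with a neighbor inside it is $u$, and it has $\deg(u)-1\ge 2$ of them. The task therefore reduces to finding a second fort $W_2\subseteq V(H_0)$ with $u_0\notin W_2$; unwinding the definitions (and using that $u$ touches $H_0$ only at $u_0$), such a set is a fort of $G$ precisely when it is a nonempty fort of $H_0$ avoiding $u_0$, and it is automatically separated from $W_1$ because the two sets lie in different components of $G-u$ while $u\notin W_1\cup W_2$.

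The heart of the argument is the existence of that fort of $H_0$ missing the prescribed vertex $u_0$, and here I would invoke \Cref{fort}. Applying it with $S=V(H_0)\setminus\{u_0\}$, the set $S$ contains a fort if and only if $\{u_0\}$ is \emph{not} a zero forcing set of $H_0$. But a single vertex can force a graph only when that graph is a path and the vertex is an endpoint, whereas $H_0$ is unicyclic and therefore not a path. Thus $\{u_0\}$ is not a zero forcing set, $S$ contains a nonempty fort $W_2$ avoiding $u_0$, and $\{W_1,W_2\}$ is the desired pair of separated forts.

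The remaining case is $v\in C$ with $\deg(v)\ge 4$. Now $G-v$ has one component $H^{\ast}$ carrying the path $C-v$, in which $v$ has its two cycle-neighbors, together with $\deg(v)-2\ge 2$ tree-components; taking $W_1=V(H^{\ast})$ and $W_2$ the union of those tree-components gives two forts (as $v$ has $2$ and $\deg(v)-2\ge 2$ neighbors in them respectively, and all other outside vertices have none) that meet only through $v$, hence are separated. In both cases \Cref{fortbarb} converts the separated forts into a barbell partition, and the cited lemma of \cite{SSPforGs} then yields $G\notin G^{SSP}$. I expect the genuine obstacle to be the off-cycle degree-exactly-$3$ situation: the naive attempt of placing $u$ in $R$ and distributing its three branches fails on parity, since one side must receive a single neighbor of $u$. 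The zero-forcing/fort-existence step is exactly what sidesteps this by extracting the second fort from inside the cycle-component rather than from the branches; the only other point needing care is the routine bookkeeping at $u$ and $u_0$ verifying that a fort of $H_0$ is indeed a fort of all of $G$.
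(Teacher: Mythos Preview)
The paper does not supply its own proof of this corollary; it is quoted verbatim from \cite{SSPforGs} and left unproved. So there is nothing in the present paper to compare your argument against.

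That said, your proof is correct, and it is very much in the spirit of the surrounding material. The key step in your off-cycle case---observing that $\{u_0\}$ cannot be a zero forcing set of the unicyclic component $H_0$ (since $H_0$ is not a path) and then invoking \Cref{fort} to extract a fort of $H_0$ avoiding $u_0$---is precisely the device the paper later deploys in \Cref{oplus-thm1} and \Cref{ssp-fortthm}. Your verification that a fort of $H_0$ missing $u_0$ is automatically a fort of $G$ is the right bookkeeping, and the on-cycle case $\deg(v)\ge 4$ is handled cleanly by splitting $G-v$ into the path-component $H^{\ast}$ and the $\deg(v)-2\ge 2$ pendant trees. The case division is exhaustive: the only way the hypothesis can hold without an off-cycle vertex of degree $\ge 3$ is for the degree-$\ge 4$ witness to lie on $C$. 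One purely cosmetic remark: your final paragraph is commentary on why the naive approach fails rather than part of the argument itself, and could be dropped or moved to a remark.
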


With these results in mind, we now present a result concerning cacti, where a cactus is defined to be a connected graph such that no edge is contained in more than one cycle.

\begin{thm}
Let $G$ be a cactus with at least two cycles.  Then $G$ has a barbell partition, and thus $G \not \in  G^{SSP}$.
\end{thm}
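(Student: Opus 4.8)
The plan is to exhibit a pair of separated forts and then invoke \cref{fortbarb} to obtain a barbell partition, whence $G \notin G^{SSP}$ by the lemma of \cite{SSPforGs} recalled above. The engine of the argument is a single observation about cut vertices lying on cycles, which I would isolate as a claim: if $c$ is a cut vertex of $G$ lying on a cycle $C$, and $W$ denotes the vertex set of the component of $G - c$ containing $V(C) \setminus \{c\}$ (this is a single component, since $C - c$ is a path and hence connected), then $W$ is a fort of $G$. To verify this I would check the fort condition for each $u \notin W$. If $u = c$, then the two neighbors of $c$ along $C$ both lie in $V(C)\setminus\{c\} \subseteq W$, so $\abs{N_G(c) \cap W} \geq 2$. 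If $u \neq c$, then $u$ lies in a different component of $G - c$, and since any edge from $u$ to $W$ would avoid $c$ and thus join two distinct components of $G - c$, we get $\abs{N_G(u) \cap W} = 0$. In either case the count is not $1$, so $W$ is a fort; it is nonempty because $\abs{V(C)} \geq 3$.

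With this claim in hand, I would select the two forts as follows. Fix two distinct cycles $C_1, C_2$ of $G$, which exist by hypothesis. Passing to the block--cut tree of $G$, the path from the block $C_1$ to the block $C_2$ begins with a cut vertex $c_1 \in V(C_1)$ and ends with a cut vertex $c_2 \in V(C_2)$, where $c_1$ separates $V(C_1)\setminus\{c_1\}$ from $C_2$, and symmetrically for $c_2$. Let $W_1$ be the component of $G - c_1$ containing $V(C_1)\setminus\{c_1\}$ and $W_2$ the component of $G - c_2$ containing $V(C_2)\setminus\{c_2\}$; by the claim both are nonempty forts. It then remains only to show that $\{W_1, W_2\}$ is a pair of separated forts, i.e. that they are disjoint with no edges between them.

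This separation is the step requiring the most care, and I would split it into two cases. If $c_1 = c_2 =: c$, then $C_1$ and $C_2$ are distinct cycles meeting only at $c$ — in a cactus two distinct cycles share at most one vertex, since a shared edge, or two shared vertices yielding two internally disjoint paths, would force some edge to lie on two cycles — so $V(C_1)\setminus\{c\}$ and $V(C_2)\setminus\{c\}$ lie in different components of $G - c$; hence $W_1 \neq W_2$ are distinct components of $G - c$ and no edge joins them. If $c_1 \neq c_2$, then $c_1$ lies on the $C_1$-side and $c_2$ on the $C_2$-side of the path between them, so $c_1 \notin W_2$; thus $W_2$ is a connected subset of $G - c_1$ lying in a component different from $W_1$ (as $W_2$ meets $V(C_2)$, which is separated from $V(C_1)\setminus\{c_1\}$ by $c_1$), and again no edge joins $W_1$ to $W_2$. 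In both cases $\{W_1,W_2\}$ is a pair of separated forts, so \cref{fortbarb} produces a barbell partition of $G$ and the earlier lemma gives $G \notin G^{SSP}$. The main obstacle, as indicated, is precisely this last bookkeeping: correctly orienting $c_1$ and $c_2$ through the block--cut tree so that the two localized forts fall on opposite sides and cannot be adjacent; once that is set up, the fort claim itself is routine.
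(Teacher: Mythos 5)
Your argument is correct, but it takes a genuinely different route from the paper's. The paper splits on whether some vertex lies on two cycles and, in each case, writes down $R$, $W_1$, $W_2$ explicitly and verifies the three barbell conditions by hand (the second case requires some bookkeeping to route the connecting path and its component into $R$). You instead isolate a small reusable lemma --- for any cut vertex $c$ lying on a cycle $C$, the component of $G-c$ containing $V(C)\setminus\{c\}$ is a fort, a fact that uses nothing about cacti --- and then use the block--cut tree to place two such forts on opposite sides of the relevant cut vertices, finishing via Observation \ref{fortbarb}. This buys a shorter verification (the fort condition is checked once, and $R$ is simply whatever is left over) and makes the proof stylistically parallel to the paper's own fort-based argument for vertex sums in Theorem \ref{oplus-thm1}; your case split $c_1=c_2$ versus $c_1\neq c_2$ is essentially the block-tree shadow of the paper's two cases. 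What the paper's version buys is self-containedness: it never invokes the block--cut tree or the fact that two cycles of a cactus meet in at most one vertex, only that vertices of degree at least $3$ in a cactus are cut vertices. The two standard facts you use without full proof (distinct blocks through a cut vertex $c$ lie in distinct components of $G-c$, and two cycles of a cactus share at most one vertex) are at the same level of ``standard'' as what the paper itself asserts without proof, so I would not count them as gaps.
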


\begin{proof}
We will prove the theorem by breaking into two cases, and then showing that in either case $G$ has a barbell partition and thus $G \not \in  G^{SSP}$.
\vspace{0.1in}

\noindent \textbf{Case 1:} There exists a vertex $v \in V(G)$ such that $v$ is in two cycles $H_1$ and $H_2$. 
\vspace{0.1in}\\
%\begin{figure}[h]
  %\centering
  %\includegraphics[width=0.9\linewidth]{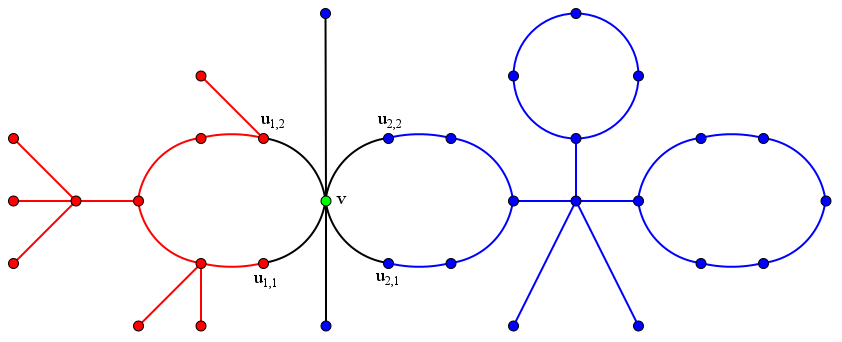}
  %\caption{A example of a cactus $G$ with a vertex $v$ in two cycles}
  %\label{cactus_barbell_1}
%\end{figure}
Since $G$ is a cactus, no edge in $E(G)$ is in more than one cycle.  As a result, every vertex of degree larger than two is a cut-vertex, and so in particular, $v$ is a cut-vertex.  Let $u_{1,1},u_{1,2},u_{2,1},u_{2,2} \in N_G(v)$ such that $u_{1,1}, u_{1,2} \in V(H_1-v)$ and $u_{2,1}, u_{2,2} \in V(H_2-v)$, and let $H$ be the component of $G-v$ containing $H_1 - v$.  Finally, let $R=\{v\}$, $W_1=V(H)$, and $W_2=V(G)\setminus \left(V(H)\cup\{v\} \right)$.
\vspace{0.1in}
\begin{addmargin}[0.87cm]{0cm}{\bf Claim 1:}
$W_1,W_2 \not = \emptyset$ and there do not exist vertices $w_1 \in W_1$ and $w_2 \in W_2$ such that $w_1w_2 \in E(G)$.
\vspace{0.1in}

\noindent {\em Proof of Claim 1.}  Since $u_{1,1},u_{1,2} \in W_1$ and $u_{2,1},u_{2,2} \in W_2$, it follows that $W_i \neq \emptyset$ for $i \in \{1,2\}$.  Furthermore, since $v$ is a cut-vertex of $G$ and $H$ is a component of $G-v$, it follows that there are no vertices $w_1 \in W_1=V(H)$ and $w_2= W_2\in V(G) \setminus \left(V(H) \cup \{v\}\right)$ such that $w_1w_2 \in E(G)$.
\end{addmargin}
\vspace{0.1in}

\begin{addmargin}[0.87cm]{0cm}{\bf Claim 2:}
$\left\vert N_G(v) \cap W_i \right\vert \neq 1$ for $i \in \{1,2\}$.
\vspace{0.1in}

\noindent {\em Proof of Claim 2.} 
\vspace{0.1in}\\
Since $u_{1,1},u_{1,2} \in W_1$ and $u_{2,1},u_{2,2} \in W_2$, it follows that $\left\vert N_G(v) \cap W_i \right\vert \geq 2$ for $i \in \{1,2\}$.
\end{addmargin}
\vspace{0.1in}

\noindent \textbf{Case 2:} There does not exist a vertex $v \in V(G)$ such that $v$ is in two cycles.
%\begin{figure}[h]
  %\centering
  %\includegraphics[width=0.9\linewidth]{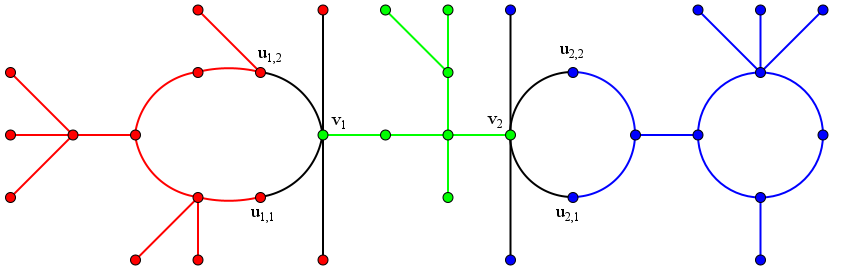}
  %\caption{Example of a cactus $G$ with vertices $v_1$ and $v_2$ in two disjoint cycles}
  %\label{cactus_barbell_2}
%\end{figure}
\vspace{0.1in}\\
Since $G$ is a cactus containing at least two cycles, there exist vertices $v_1,v_2 \in V(G)$ such that $v_1$ is contained in a cycle $H_1$, $v_2$ is contained in a cycle $H_2$, and the $v_1v_2$-path $P$ in $G$ contains no vertices which are incident to a cycle except for $v_1$ and $v_2$. Note that the path $P$ could be could be an edge. As before, let $u_{1,1},u_{1,2} \in N_G(v_1)$ and $u_{2,1},u_{2,2} \in N_G(v_2)$ such that $u_{1,1}, u_{1,2} \in V(H_1-v_1)$ and $u_{2,1}, u_{2,2} \in V(H_2-v_2)$.  Since $v_1$ and $v_2$ are vertices in a cactus each contained in a cycle $H_i$ and a path $P$ such that $V(P-v_i) \cap V(H_i)=\emptyset$, it follows that each of $v_1$ and $v_2$ are cut-vertices of $G$.  If $V(P)\setminus \{v_1,v_2\}=\emptyset$, then let $R=\{v_1,v_2\}$; and otherwise let $R=V(H_P)\cup\{v_1,v_2\}$ for $H_P$ the component of $G- \{v_1,v_2\}$ containing $P-\{v_1,v_2\}$.  Additionally, let $W_1=V\left(\bigcup \mathcal H \right)$ for $\mathcal H$ the collection of components of $G-(\{v_1\}\cup R)$ not containing $v_2$ and $W_2=V(G)\setminus \left(R \cup W_1 \right)$.  
\vspace{0.1in}
\begin{addmargin}[0.87cm]{0cm}{\bf Claim 1:}
$W_1,W_2 \not = \emptyset$ and there do not exist vertices $w_1 \in W_1$ and $w_2 \in W_2$ such that $w_1w_2 \in E(G)$.
\vspace{0.1in}

\noindent {\em Proof of Claim 1.}
Since $v_1$ and $v_2$ are cut vertices of $G$, we have $W_1,W_2 \not = \emptyset$.  Furthermore, since $W_1=V(\bigcup \mathcal H)$ and $\mathcal H$ is a collection of components of $G-v_1$, it follows that the only vertex not in $W_1$ adjacent to vertices in $W_1$ is $v_1$.  Since $v_1 \not \in W_2$, there do not exist vertices $w_1 \in W_1$ and $w_2 \in W_2$ such that $w_1w_2 \in E(G)$.
\end{addmargin}

\vspace{0.1in}

\begin{addmargin}[0.87cm]{0cm}{\bf Claim 2:}
For $v \in R$ and $i \in \{1,2\}$, $\left\vert N_G(v) \cap W_i \right\vert \neq 1$.
\vspace{0.1in}

\noindent {\em Proof of Claim 2.} 
Again, the only vertex not in $W_1$ adjacent to vertices in $W_1$ is $v_1$.  Likewise, the only vertex not in $W_2$ adjacent to vertices in $W_2$ is $v_2$.  Furthermore, since $u_{1,1},u_{1,2} \in W_1$ and $u_{2,1},u_{2,2} \in W_2$, it follows that $\left\vert N_G(v_i) \cap W_i \right\vert \geq 2$ for $i \in \{1,2\}$, and thus for $v \in R$ and $i \in \{1,2\}$, $\left\vert N_G(v) \cap W_i \right\vert \neq 1$.
\end{addmargin}
\vspace{0.1in}
\end{proof}

%\textcolor{blue}{(from Ralihe) I think that 1.39 can also be proven has follows:  First, suppose $P$ admits a barbell partition $(R, W_1,W_2)$. Then there exists $v_i\in W_i$ and $r\in R$ such that $r$ is adjacent to $v_1$ and to $v_2$ since the diameter of the Petersen graph is $2$. Moreover, we have that $P$ is $3$-regular. Therefore $|N(r)\cap W_i|=1$ for some $i=1,2$. Concluding that $P$ does not admit a barbell partition.}\\

In an effort to develop a catalog of graphs that are excluded from the class $G^{SSP}$, we are interested in graphs that admit a barbell partition. On the other hand, we have the following graph structural result regarding the nonexistence of a barbell partition. 

\begin{lem}\label{degdiam}
Let $G$ be a connected graph with diameter $2$ and maximum degree $3$. Then $G$ does not admit a barbell partition.
\end{lem}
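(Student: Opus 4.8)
The plan is to argue by contradiction: suppose $G$ is a connected graph of diameter $2$ and maximum degree $3$ that \emph{does} admit a barbell partition $\{R, W_1, W_2\}$, and derive a contradiction from the three defining conditions together with the degree and diameter constraints. The key observation I want to exploit is that condition (2) forbids edges between $W_1$ and $W_2$, so every path from a vertex of $W_1$ to a vertex of $W_2$ must pass through $R$; combined with diameter $2$, this forces such paths to be very short and hence forces $R$ to be small and tightly connected to both sides.

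First I would fix vertices $w_1 \in W_1$ and $w_2 \in W_2$ (both nonempty by condition (1)). Since there is no edge $w_1 w_2$ and $\operatorname{diam}(G) = 2$, there must exist a common neighbor $r \in R$ adjacent to both $w_1$ and $w_2$; note $r \in R$ precisely because no vertex of $W_1$ is adjacent to a vertex of $W_2$. So every such $r$ has at least one neighbor in $W_1$ and at least one in $W_2$, and by condition (3) it must in fact have \emph{at least two} neighbors in each of $W_1$ and $W_2$. This already consumes four of the at most three edges available at $r$, which is the crux of the contradiction. Concretely, $|N_G(r) \cap W_1| \geq 2$ and $|N_G(r) \cap W_2| \geq 2$ give $\deg(r) \geq 4$, contradicting maximum degree $3$.

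The step I expect to be the genuine obstacle is establishing that such a doubly-adjacent vertex $r$ must exist, i.e. ruling out the possibility that $w_1$ and $w_2$ are joined only through a \emph{longer} route or through several distinct intermediate vertices none of which is adjacent to both sides. The diameter-$2$ hypothesis is exactly what closes this gap: for \emph{any} chosen $w_1 \in W_1$ and $w_2 \in W_2$ there is a walk of length at most $2$ between them, and since they are nonadjacent the length is exactly $2$, yielding a common neighbor $r$. The care needed here is to confirm $r \notin W_1 \cup W_2$: if $r \in W_1$ then the edge $r w_2$ would join $W_1$ to $W_2$, contradicting condition (2), and symmetrically for $r \in W_2$; hence $r \in R$. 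Once $r \in R$ is secured, condition (3) upgrades each of the two adjacencies to multiplicity at least two, and the degree count finishes the argument.

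I would then remark that the two hypotheses are used in an essentially complementary way: diameter $2$ guarantees a midpoint vertex in $R$ simultaneously feeding both lobes, while maximum degree $3$ denies that vertex the room to carry the two-fold adjacency in each lobe demanded by the barbell condition. This also suggests the result is tight, since relaxing either bound (permitting diameter $3$, so the two lobes can communicate through two separate $R$-vertices, or permitting degree $4$, so a single $R$-vertex can absorb the four required edges) would allow barbell partitions to reappear.
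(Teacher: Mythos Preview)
Your proof is correct and follows essentially the same route as the paper: use diameter $2$ to produce a common neighbor $r\in R$ of some $w_1\in W_1$ and $w_2\in W_2$, then invoke the degree bound to show $r$ cannot satisfy condition~(3). The only cosmetic difference is that the paper phrases the endgame as ``$r$ has at most one further neighbor, so $|N_G(r)\cap W_i|=1$ for some $i$'' while you phrase it contrapositively as ``condition~(3) forces $\deg(r)\geq 4$''; these are the same argument.
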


\begin{proof}
Let $W_1,W_2 \subseteq V(G)$ be arbitrary nonempty sets of vertices such that no vertex in $W_1$ is adjacent to a vertex in $W_2$.  Let $R=V(G) \setminus (W_1 \cup W_2)$.  Since $G$ is connected, $R$ is nonempty. We will show that there exists a vertex $r \in R$ such that for some $i \in \{1,2\}$, $\big{|}N_G(r) \cap W_i\big{|}=1$. 

Let $w_1 \in W_1$ and $w_2 \in W_2$.  Since $G$ is connected and of diameter 2 there exists a $w_1w_2$-path of length 2. However, since there are no edges between vertices in $W_1$ and vertices in $W_2$ and this path is of length 2, the middle vertex in this $w_1w_2$-path must be a member of $R$, call it $r$.  Since $r$ is neighbors with $w_1$ and $w_2$, but $G$ has maximum degree $3$, $r$ has at most one other neighbor.  Thus, for some $i \in \{1,2\}$, $|N_G(r)\cap W_i|=1$.
\end{proof}

\begin{cor}
The Petersen graph, $P,$ does not admit a barbell partition.
\end{cor}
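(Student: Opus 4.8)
The plan is to apply \Cref{degdiam} directly to the Petersen graph, so the entire proof reduces to verifying that the Petersen graph satisfies the two hypotheses of that lemma: it is connected, it has diameter $2$, and it has maximum degree $3$. All three are standard and well-known properties of the Petersen graph, so the argument is essentially an invocation rather than a new construction.

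First I would recall the relevant structural facts. The Petersen graph $P$ is $3$-regular (each vertex has exactly three neighbors), so its maximum degree is $3$; in particular it is trivially connected. The only genuinely substantive point is the diameter: I would state that $P$ has diameter $2$, meaning every pair of distinct vertices is joined by a path of length at most $2$. This is the classical fact that $P$ is a Moore graph of diameter $2$ and girth $5$ (indeed it is the unique $(3,5)$-cage), which forces $|V(P)| = 1 + 3 + 3\cdot 2 = 10$ and exactly realizes the Moore bound, so any two nonadjacent vertices share a common neighbor.

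With these three properties in hand, the conclusion is immediate: since $P$ is a connected graph of diameter $2$ and maximum degree $3$, \Cref{degdiam} applies and tells us that $P$ admits no barbell partition. I would phrase the proof as a single short paragraph that cites the regularity and diameter of $P$ and then invokes the lemma.

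The only place where care is needed -- the ``obstacle,'' such as it is -- is making sure the diameter claim is properly justified rather than merely asserted. Since a fully self-contained verification would require either a case analysis on the standard Kneser/pentagon-pentagram model of $P$ or an appeal to the Moore bound, I expect the cleanest route is to cite the well-known Moore-graph characterization (or a standard graph theory reference) for the diameter-$2$ property, and to note regularity directly from the definition. No new combinatorial machinery is required beyond \Cref{degdiam} itself.
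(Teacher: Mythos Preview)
Your proposal is correct and matches the paper's own proof essentially verbatim: the paper simply notes that $P$ is $3$-regular and has diameter $2$, then invokes \Cref{degdiam}. Your additional remarks about the Moore-graph characterization are more justification than the paper itself provides, but the approach is identical.
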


\begin{proof}
$P$ is $3$-regular and has diameter 2, so by Lemma \ref{degdiam}, $P$ does not admit a barbell partition. %{\color{red}(Could we also keep the alternate proof which was written specifically for the Petersen graph?)}
\end{proof}

%\begin{proof}
%Suppose $P$ admits the barbel partition $(R,W_1,W_2).$ Without loss of generality, assume that each vertex in $R$ has at least one neighbour each, outside and inside $R.$ Since $P$ is $3$-regular and $(R,W_1,W_2)$ is a barbel partition, each vertex in $R$ must have two neighbours in exactly one of the $W_i$'s and remaining one in $R$ itself. This means that the set of neighbours of $R$ in $W_i,$ $N_{W_i}(R)$ is even (and nonempty) for $i=1,2.$ Also note that $N_{W_1}(R)\cup N_{W_2}(R)$ is an independent set because $P$ does not contain $3$-cycles. Now we use the fact that the independence number, size of the largest independent vertex set for $P$ is $4$ [[cite: \textit{The Petersen Graph} by Holton and Sheehan, Cambridge University Press (1993), pp 12]]. This means that each $N_{W_i}(R)=2,$ which in turn implies that the cardinality of $R$ is $2.$ This is a contradiction because the vertex connectivity of $P$ is $3.$ [[cite: \textit{The Petersen Graph} by Holton and Sheehan, Cambridge University Press (1993), pp 14]].
%\end{proof}

It is necessary that the graph in Lemma \ref{degdiam} has all three properties, that is be connected, maximum degree $3$, and have diameter 2.  As stated in a previous theorem, every disconnected graph admits a barbell partition, and in the diagram below we provide an example of a graph of diameter 2 and a 3-regular graph which each admit barbell partitions.

\begin{figure}[h]
  \centering
  \includegraphics[width=0.7\linewidth]{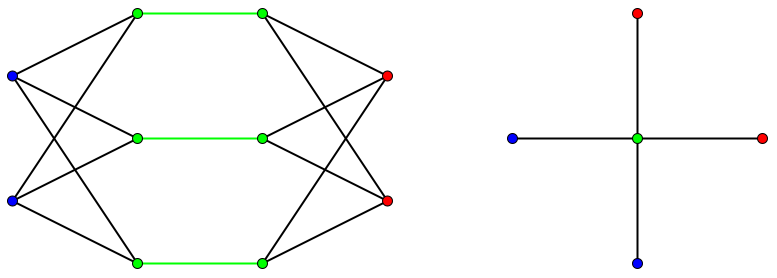}
  \caption{A 3-regular graph and a graph of diameter 2 each of which are connected but admit a barbell partition. The colors green, red, and blue represent $R, W_1$, and  $W_2$, respectively.}
\end{figure}

\section{Barbell Partitions and the Removal or Addition of Edges or Vertices}

Having identified some classes of graphs admitting barbell partitions, we now consider the effect basic graph operations such as adding or removing vertices or edges have on the presence of barbell partitions.  We first consider the effect of adding or removing an edge.

\begin{obs}
Let $G$ be a graph and $R,W_1,W_2$ be a partition of the vertices of $G$ with $W_1, W_2 \neq \emptyset$ such that no vertex in $W_1$  is adjacent to a vertex in $W_2$.  Let $e=\{u,v\} \in E(G)$ with $u,v \in W_i$ for some $i \in \{1,2\}$ or $u,v \in R$.  Then $\{R,W_1,W_2\}$ is a barbell partition of $G$ if and only if $\{R,W_1,W_2\}$ is a barbell partition of $G-e$.  
\end{obs}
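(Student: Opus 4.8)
The plan is to observe that the three defining conditions of a barbell partition separate cleanly, so that the claimed equivalence reduces to a single condition. Conditions (1) and (2) of the definition — namely $W_1, W_2 \neq \emptyset$ and the absence of edges between $W_1$ and $W_2$ — hold for the partition $\{R,W_1,W_2\}$ of $G$ by hypothesis, and since $G-e$ has the same vertex set as $G$ and a subset of its edges, $W_1$ and $W_2$ remain nonempty and no $W_1$--$W_2$ edge can be created by deleting $e$. Thus (1) and (2) hold simultaneously in $G$ and in $G-e$, and the equivalence is governed entirely by condition (3): for each $v \in R$ and $i \in \{1,2\}$, $\abs{N_G(v) \cap W_i} \neq 1$.

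The key structural fact I would use is that deleting $e=\{u,v\}$ changes only the neighborhoods of $u$ and $v$: for every vertex $w \notin \{u,v\}$ one has $N_{G-e}(w) = N_G(w)$, while $N_{G-e}(u) = N_G(u) \setminus \{v\}$ and $N_{G-e}(v) = N_G(v) \setminus \{u\}$. I would then split according to the hypothesis on the endpoints of $e$. If $u,v \in W_i$ for some $i$, then neither endpoint lies in $R$, so every $r \in R$ satisfies $r \notin \{u,v\}$ and hence $N_{G-e}(r) = N_G(r)$; consequently $\abs{N_{G-e}(r)\cap W_j} = \abs{N_G(r)\cap W_j}$ for every $r \in R$ and $j \in \{1,2\}$. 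If instead $u,v \in R$, then for $r \in R \setminus \{u,v\}$ the neighborhood is unchanged as before, while for $r \in \{u,v\}$ the only neighbor deleted is the other endpoint, which also lies in $R$ and therefore in neither $W_1$ nor $W_2$; thus again $N_{G-e}(r)\cap W_j = N_G(r)\cap W_j$ for every $r \in R$ and $j \in \{1,2\}$.

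In both cases the relevant intersection counts are identical in $G$ and in $G-e$, so condition (3) holds in one precisely when it holds in the other, and combined with the persistence of (1) and (2) this yields the stated equivalence in both directions.

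The main point to get right — rather than a genuine obstacle — is the bookkeeping verifying that deleting $e$ never alters any quantity $\abs{N_G(r)\cap W_i}$ with $r \in R$. This is exactly why the hypothesis forces both endpoints of $e$ into a common part: the argument would fail for an edge with one endpoint in $R$ and the other in some $W_i$, since deleting such an edge could drop a count from $2$ to $1$ or from $1$ to $0$ and thereby break condition (3). I would highlight this to make clear that the restriction on the endpoints of $e$ is essential.
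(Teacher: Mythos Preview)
Your proof is correct and follows essentially the same reasoning as the paper: since both endpoints of $e$ lie in the same part of the partition, deleting $e$ leaves all three defining criteria of a barbell partition unchanged. The paper states this as a one-sentence justification immediately following the observation, while you have spelled out the case analysis in more detail, but the underlying argument is identical.
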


Since the pair of vertices $u$ and $v$ are in the same element of the partition, the presence or lack of the edge $e=\{u,v\}$ has no effect on any of the three criteria determining whether $\{R,W_1,W_2\}$ is a barbell partition or not.

\begin{obs}
Let $G$ be a graph admitting a barbell partition $\{R,W_1,W_2\}$, let $u \in R$, let $v \in W_{i}$ for some $i \in \{1,2\}$, and let $e=\{u,v\} \in E(G)$.  Then $G-e$ admits a barbell partition provided $\left\vert N_G(u) \cap W_{i} \right\vert>2$.
\end{obs}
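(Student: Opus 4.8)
The plan is to show that the very same partition $\{R,W_1,W_2\}$ remains a barbell partition once the edge $e=\{u,v\}$ is deleted, so that no new partition need be constructed. The key observation is that removing an edge cannot destroy any of the three defining conditions, except possibly the third, and even then only at the single vertex $u$ whose incidences are altered. First I would dispose of the two easy conditions: since $G-e$ has the same vertex set as $G$, the sets $W_1$ and $W_2$ remain nonempty, so the first condition holds; and since deleting an edge can only remove adjacencies, the absence of edges between $W_1$ and $W_2$ in $G$ persists in $G-e$ (indeed $e$ joins $R$ to $W_i$, so it was never one of the forbidden $W_1$–$W_2$ edges in the first place).

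The heart of the verification is the third condition, namely that $\left\vert N_{G-e}(w) \cap W_j \right\vert \neq 1$ for every $w \in R$ and every $j \in \{1,2\}$. For any vertex $w \in R$ with $w \neq u$, deleting $\{u,v\}$ leaves all of $w$'s adjacencies intact, so $N_{G-e}(w)=N_G(w)$ and the condition is inherited directly from the barbell partition of $G$. Likewise, the adjacencies of $u$ into the part $W_j$ with $j \neq i$ are unaffected by removing an edge whose $W$-endpoint lies in $W_i$, so that count is also unchanged. The only quantity that moves is $\left\vert N_G(u) \cap W_i \right\vert$.

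This is precisely where the hypothesis enters. Since $v \in W_i$ and $v$ is adjacent to $u$, we have $v \in N_G(u) \cap W_i$, and deleting $e$ removes exactly this one vertex from the intersection, giving $\left\vert N_{G-e}(u) \cap W_i \right\vert = \left\vert N_G(u) \cap W_i \right\vert - 1$. The assumption $\left\vert N_G(u) \cap W_i \right\vert > 2$ then forces $\left\vert N_{G-e}(u) \cap W_i \right\vert \geq 2$, so in particular it is not equal to $1$. All three conditions therefore hold for $G-e$, completing the argument.

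There is no genuine obstacle here; the content of the statement lies entirely in pinning down why the bound must be strict, and I would emphasize this with a closing remark on sharpness. The barbell condition already guarantees $\left\vert N_G(u) \cap W_i \right\vert \neq 1$, and the presence of the edge $e$ forces this count to be at least $1$, hence at least $2$. Were it exactly $2$, the deletion would drop it to $1$ and break the third condition, which is exactly the scenario the hypothesis $\left\vert N_G(u) \cap W_i \right\vert > 2$ is designed to exclude.
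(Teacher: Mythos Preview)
Your proof is correct and takes exactly the same approach as the paper: the paper observes in a single sentence that removing $e$ only affects the number of neighbours $u$ has in $W_i$, and since $\left\vert N_G(u) \cap W_i \right\vert>2$ the same partition $\{R,W_1,W_2\}$ remains a barbell partition of $G-e$. Your write-up simply unpacks this with more care, and the sharpness remark at the end is a nice touch that the paper does not include.
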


In this second case, the removal of $e$ only effects the number of neighbors $u \in R$ has in $W_{i}$, and since $\left\vert N_G(u) \cap W_{i} \right\vert>2$, $\{R, W_1, W_2\}$ is still a barbell partition of $G-e$.  The effect of adding an edge is quite similar and we have the following.

\begin{obs}
Let $G$ be a graph admitting a barbell partition $\{R,W_1,W_2\}$, let $u \in R$, let $v \in W_{i}$ for $i \in \{1,2\}$, and let $e=\{u,v\} \not \in E(G)$.  Then $G+e$ admits a barbell partition provided $\left\vert N_G(u) \cap W_{i} \right\vert \geq 2$.
\end{obs}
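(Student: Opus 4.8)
The plan is to show that the very same partition $\{R,W_1,W_2\}$ continues to serve as a barbell partition for $G+e$. Since adding the edge $e=\{u,v\}$ neither alters the vertex set nor the partition itself and can only enlarge neighborhoods, I would verify the three defining conditions one at a time, tracking the single neighborhood that changes.

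First I would dispense with conditions (1) and (2). The sets $W_1$ and $W_2$ are unchanged, hence still nonempty, so condition (1) is inherited from $G$. For condition (2), the endpoints of the new edge are $u \in R$ and $v \in W_i$, so $e$ runs from $R$ into $W_i$ and in particular creates no edge between $W_1$ and $W_2$; thus the absence of $W_1$–$W_2$ edges in $G$ carries over to $G+e$.

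The substance of the argument is condition (3). Here I would observe that the only neighborhood affected by adding $e$ is that of $u$ (and of $v$, but $v \notin R$, so $v$ is irrelevant to condition (3)). For any $w \in R$ with $w \neq u$ we have $N_{G+e}(w)=N_G(w)$, so $|N_{G+e}(w)\cap W_j|\neq 1$ for $j\in\{1,2\}$ is inherited directly from the barbell partition of $G$. For the vertex $u$ I would split on the index $j$: when $j\neq i$ the new edge does not meet $W_j$, so $N_{G+e}(u)\cap W_j=N_G(u)\cap W_j$ and the count remains $\neq 1$; when $j=i$, the hypothesis $e\notin E(G)$ gives $v\notin N_G(u)$, so $N_{G+e}(u)\cap W_i=(N_G(u)\cap W_i)\cup\{v\}$ and the count increases by exactly one.

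This last increment is the only delicate point, and it is exactly where the hypothesis $|N_G(u)\cap W_i|\geq 2$ is needed. Indeed, adding $v$ raises $u$'s $W_i$-neighbor count by one, so had $u$ possessed no neighbors in $W_i$ in $G$, then in $G+e$ it would possess precisely one, violating condition (3); this is why the assumption cannot simply be weakened to $|N_G(u)\cap W_i|\neq 1$. Under the stated hypothesis, however, $|N_{G+e}(u)\cap W_i|\geq 3\neq 1$, so condition (3) holds for $u$ as well, completing the verification that $\{R,W_1,W_2\}$ is a barbell partition of $G+e$.
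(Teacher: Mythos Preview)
Your proof is correct and follows the same approach the paper implicitly takes: the paper offers no formal proof of this observation, noting only that ``the effect of adding an edge is quite similar'' to the edge-removal case and that the same partition $\{R,W_1,W_2\}$ survives. Your detailed verification of conditions (1)--(3), including the explanation of why the hypothesis $|N_G(u)\cap W_i|\ge 2$ (rather than merely $\ne 1$) is needed, makes this explicit.
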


%\textcolor{blue}{define $G+e$?}

The \emph{lollipop} graph, $L_{n,l}$ for $n\geq 2$ and $l\geq 1$, is defined to be a complete graph $K_n$ together with a path $P_l$ such that a leaf of $P_l$ and a vertex of $K_n$ are adjacent, and no other extra edges between the vertices of $K_n$ and $P_l$ exist. In \cite{SSPforGs} it is shown that $L_{n,l} \in G^{SSP}$.

\begin{prop}
If the pendant vertex of the lollipop graph $L_{n,1}$ is duplicated without an edge (call the new graph $G$), and  $A\in \mathcal{S}(K_n)$ then the SSP is preserved for $B \in \mathcal{S}(G)$ such that 
\[B = \begin{bmatrix}
A & \textbf{e}_{n} & \textbf{e}_{n}\\
\textbf{e}_{n}^T & \mu_1 & 0\\ \textbf{e}_{n}^T & 0 & \mu_2\\
\end{bmatrix},\] 
where $\mu_1 \neq \mu_2 \in \mathbb{R}$ and $e_n$ represents a standard basis vector with a 1 in the $n$th coordinate. 
%for $1 \leq j \leq 2$ and $\lambda_j \neq 0$. \textcolor{blue}{In ${e}_{\lambda_j}$ is $j$ representing the location of the nonzero entry or the index  $1 \leq j \leq 2$?}
\end{prop}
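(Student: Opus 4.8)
The plan is to verify the SSP directly from its definition: take an arbitrary symmetric $X$ satisfying $B\circ X=O$, $I\circ X=O$, and $[B,X]=O$, and deduce $X=O$. First I would pin down the support of $X$. The conditions $B\circ X=O$ and $I\circ X=O$ force $X_{ij}=0$ whenever $i=j$ or $ij\in E(G)$, so $X$ is supported on the non-edges of $G$. Since the vertices $1,\dots,n$ induce $K_n$, the leading $n\times n$ block of $X$ vanishes; and since each of the two duplicated vertices is adjacent only to vertex $n$, the only possibly nonzero entries of $X$ join a pendant vertex to one of $1,\dots,n-1$, together with the single entry linking the two pendants. Writing $X$ in the block form matching $B$, this yields
\[
X=\begin{bmatrix} O & \mathbf{p} & \mathbf{q}\\ \mathbf{p}^{T} & 0 & s\\ \mathbf{q}^{T} & s & 0\end{bmatrix},
\]
where $\mathbf{p},\mathbf{q}\in\mathbb{R}^{n}$ have vanishing $n$-th coordinate (the edges from the pendants to vertex $n$ force $p_n=q_n=0$) and $s\in\mathbb{R}$.

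Next I would impose $[B,X]=O$ and read off the block equations of $BX=XB$. I expect the $(2,3)$ block to give $\mu_1 s=\mu_2 s$, so that $\mu_1\neq\mu_2$ forces $s=0$. The $(1,1)$ block should read $\mathbf{e}_n(\mathbf{p}+\mathbf{q})^{T}=(\mathbf{p}+\mathbf{q})\mathbf{e}_n^{T}$; comparing entries and using $p_n+q_n=0$ yields $\mathbf{p}+\mathbf{q}=\mathbf{0}$, i.e.\ $\mathbf{q}=-\mathbf{p}$. With $s=0$, the $(1,2)$ and $(1,3)$ blocks become the eigenvector equations $A\mathbf{p}=\mu_1\mathbf{p}$ and $A\mathbf{q}=\mu_2\mathbf{q}$; substituting $\mathbf{q}=-\mathbf{p}$ into the latter gives $A\mathbf{p}=\mu_2\mathbf{p}$, whence $(\mu_1-\mu_2)\mathbf{p}=\mathbf{0}$ and therefore $\mathbf{p}=\mathbf{q}=\mathbf{0}$. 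The remaining blocks are transposes of these (using that $A$ is symmetric) or read $0=0$, so they contribute nothing new. This forces $X=O$, establishing that $B$ has the SSP.

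The block multiplication itself is routine; the crux of the argument is recognizing that the hypothesis $\mu_1\neq\mu_2$ is exploited twice—first to eliminate the coupling scalar $s$ via the $(2,3)$ block, and then to eliminate $\mathbf{p}$ (hence $\mathbf{q}$) once the $(1,1)$ block has tied $\mathbf{q}$ to $-\mathbf{p}$. The only subtlety to track carefully is that $\mathbf{p}$ and $\mathbf{q}$ have zero last coordinate, which is exactly what makes the $(1,1)$-block comparison close cleanly. Indeed, the distinctness of $\mu_1$ and $\mu_2$ is essential: were $\mu_1=\mu_2$ and equal to an eigenvalue of $A$ admitting an eigenvector with vanishing $n$-th coordinate, one could assemble a nonzero such $X$, so $B$ would fail the SSP.
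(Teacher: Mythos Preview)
Your proof is correct and follows essentially the same route as the paper's own argument: determine the block structure of $X$, use the $(2,3)$ block together with $\mu_1\neq\mu_2$ to kill the coupling scalar, use the $(1,1)$ block to force $\mathbf{q}=-\mathbf{p}$, and then read off the eigenvector equations $A\mathbf{p}=\mu_1\mathbf{p}$, $A\mathbf{q}=\mu_2\mathbf{q}$ from the $(1,2)$ and $(1,3)$ blocks to conclude $\mathbf{p}=\mathbf{q}=\mathbf{0}$. If anything, your write-up is a bit more explicit than the paper's at the final step, spelling out that $\mu_1\neq\mu_2$ is used a second time to pass from the pair of eigenvector equations to $\mathbf{p}=\mathbf{0}$.
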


\begin{proof}
%Notice from Example 5.7 in \cite{SSPforGs} that $L_{n,1} \in G^{SSP}$.\textcolor{blue}{First sentence is not needed here.} 
From the first two requirements of SSP, symmetric $X$ such that $B \circ X = O$ and $I \circ X = O$ yields that \[X = \begin{bmatrix}
O & \textbf{x}_1 & \textbf{x}_2\\
\textbf{x}_1^T & 0 & y\\ 
\textbf{x}_2^T & y & 0\\
\end{bmatrix},\] 
where $y \in \mathbb{R}$ and  the $n$th coordinate of $\textbf{x}_j$ is necessarily 0 for $j=1,2$.
%has only one possible nonzero entry in the in the $s$th coordinate \textcolor{blue}{or just $x_{is}=0$, for $i\in \{1,2\}$}. 
Then, the third condition yields $BX = XB$ where
\[BX = \begin{bmatrix}
A & \textbf{e}_{n} & \textbf{e}_{n}\\
\textbf{e}_{n}^T & \mu_1 & 0\\ \textbf{e}_{n}^T & 0 & \mu_2\\
\end{bmatrix} \begin{bmatrix}
O & \textbf{x}_1 & \textbf{x}_2\\
\textbf{x}_1^T & 0 & y\\ 
\textbf{x}_2^T & y & 0\\
\end{bmatrix} = \begin{bmatrix}
\textbf{e}_{n}\textbf{x}_1^T + \textbf{e}_{n}\textbf{x}_2^T & A\textbf{x}_1+y\textbf{e}_{n} & A\textbf{x}_2+y\textbf{e}_{n}\\
\mu_1\textbf{x}_1^T & \textbf{e}_{n}^T\textbf{x}_1 & \textbf{e}_{n}^T\textbf{x}_2+\mu_1y\\ 
\mu_2\textbf{x}_2^T & \textbf{e}_{n}^T\textbf{x}_1+\mu_2y & \textbf{e}_{n}^T \textbf{x}_2\\
\end{bmatrix}\] 
\[XB =  \begin{bmatrix}
O & \textbf{x}_1 & \textbf{x}_2\\
\textbf{x}_1^T & 0 & y\\ 
\textbf{x}_2^T & y & 0\\
\end{bmatrix}\begin{bmatrix}
A & \textbf{e}_{n} & \textbf{e}_{n}\\
\textbf{e}_{n}^T & \mu_1 & 0\\ \textbf{e}_{n}^T & 0 & \mu_2\\
\end{bmatrix} = \begin{bmatrix}
\textbf{x}_1\textbf{e}_{n}^T + \textbf{x}_2\textbf{e}_{n}^T & \mu_1\textbf{x}_1 & \mu_2\textbf{x}_2\\
\textbf{x}_1^TA+y\textbf{e}_{n}^T & \textbf{x}_1^T\textbf{e}_{n} & \textbf{x}_1^T\textbf{e}_{n}+\mu_2y\\ 
\textbf{x}_2^TA+y\textbf{e}_{n}^T & \textbf{x}_2^T\textbf{e}_{n}+\mu_1y & \textbf{x}_2^T\textbf{e}_{n}\\
\end{bmatrix}.\] 
Simplifying: 
\[\begin{bmatrix}
\textbf{e}_{n}\textbf{x}_1^T + \textbf{e}_{n}\textbf{x}_2^T & A\textbf{x}_1+y\textbf{e}_{n} & A\textbf{x}_2+y\textbf{e}_{n}\\
\mu_1\textbf{x}_1^T & \overline{0} & \overline{0}+\mu_1y\\ 
\mu_2\textbf{x}_2^T & \overline{0}+\mu_2y & \overline{0}\\
\end{bmatrix} = \begin{bmatrix}
\textbf{x}_1\textbf{e}_{n}^T + \textbf{x}_2\textbf{e}_{n}^T & \mu_1\textbf{x}_1 & \mu_2\textbf{x}_2\\
\textbf{x}_1^TA+y\textbf{e}_{n}^T & \overline{0} & \overline{0}+\mu_2y\\
\textbf{x}_2^TA+y\textbf{e}_{n}^T & \overline{0}+\mu_1y & \overline{0}\\
\end{bmatrix}.\] 
Comparing the (2,3) blocks above we conclude that $y=0$.  Further, 
$\textbf{x}_1\textbf{e}_{n}^T + \textbf{x}_2\textbf{e}_{n}^T = 
\textbf{e}_{n}\textbf{x}_1^T + \textbf{e}_{n}\textbf{x}_2^T$, together with $x_{1n}=x_{2n}=0$, imply
 $-x_{1j} = x_{2j}$, where $1 \leq j \leq n$ and $j \neq i$. Hence  $\textbf{x}_1$ is a multiple of $\textbf{x}_2$. Now comparing the (1,2) and the (1,3) blocks above we may conclude that $\textbf{x}_1= \textbf{x}_2=0$. Hence, $X = O$. 
\end{proof}

We next turn our attention to the duplication or removal of a vertex.

\begin{obs}
Let $G$ be a graph admitting a barbell partition $\{R, W_1, W_2\}$, and let $S \subset V(G)$.  If $S \subseteq R$, then $G-S$ admits a barbell partition.
\end{obs}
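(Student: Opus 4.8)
The plan is to take the obvious candidate partition $\{R \setminus S, W_1, W_2\}$ for $G - S$ and verify that it satisfies the three defining conditions of a barbell partition. Because $S \subseteq R$, the sets $W_1$ and $W_2$ are untouched by the deletion, and the vertex set of $G - S$ is exactly $(R \setminus S) \cup W_1 \cup W_2$; hence these three sets do form a partition of $V(G-S)$.

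First I would check that $W_1, W_2 \neq \emptyset$, which is immediate since they are identical to the corresponding parts of the original barbell partition of $G$. Next I would confirm that there are no edges between $W_1$ and $W_2$ in $G - S$; this holds because deleting vertices can only remove edges, and there were none between $W_1$ and $W_2$ to begin with.

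The only condition requiring any care is the neighborhood criterion on the vertices of $R \setminus S$. For $v \in R \setminus S$ one has $N_{G-S}(v) = N_G(v) \setminus S$, so $N_{G-S}(v) \cap W_i = (N_G(v) \cap W_i) \setminus S$. The key point is that $S \subseteq R$ is disjoint from $W_i$, whence $S \cap W_i = \emptyset$ and therefore $N_{G-S}(v) \cap W_i = N_G(v) \cap W_i$. It follows that $|N_{G-S}(v) \cap W_i| = |N_G(v) \cap W_i| \neq 1$ for $i \in \{1,2\}$, as needed. There is essentially no obstacle here: the crucial observation is that removing vertices from $R$ cannot change any vertex's count of neighbors inside $W_1$ or $W_2$, since such counts depend only on edges into $W_1 \cup W_2$, all of which survive the deletion. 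Membership of $G-S$ outside $G^{SSP}$ then follows from the earlier lemma linking barbell partitions to failure of the SSP.
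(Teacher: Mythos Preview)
Your argument is correct and is exactly the natural verification: the paper states this result as an observation with no explicit proof, treating as evident that removing vertices from $R$ leaves $W_1$, $W_2$, the non-adjacency between them, and each remaining $R$-vertex's neighbor counts in $W_i$ unchanged. Your final sentence about $G^{SSP}$ is extraneous to the statement itself but harmless.
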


\begin{obs}
Let $G$ be a graph admitting a barbell partition $\{R, W_1, W_2\}$.  If $v \in W_i$ for some $i \in \{1,2\}$ and either
\begin{itemize}
    \item $N_G(v) \subset W_i$ or
    \item for every $u \in \left(R \cap N_G(v)\right)$, we have that $\left\vert N_G(u) \cap W_i \right\vert>2$,
\end{itemize}
then $G-v$ admits a barbell partition.
\end{obs}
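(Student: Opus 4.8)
The plan is to exhibit an explicit barbell partition of $G-v$ and verify the three defining conditions directly, reusing as much of the original partition as possible. Assume without loss of generality that $v \in W_1$, and take as the candidate partition of $V(G-v)$ the triple $\{R,\, W_1 \setminus \{v\},\, W_2\}$. Conditions (1) and (2) are inherited almost for free: the set $W_2$ is untouched and so remains nonempty, and since $W_1 \setminus \{v\} \subseteq W_1$ and the original partition had no edges between $W_1$ and $W_2$, there are still no edges between $W_1 \setminus \{v\}$ and $W_2$. The only part of (1) needing justification is that $W_1 \setminus \{v\} \neq \emptyset$, which I defer to the end since it will follow from the hypotheses.

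The heart of the argument is condition (3): I must check that no $u \in R$ has exactly one neighbor in $W_1 \setminus \{v\}$ (the half of (3) concerning $W_2$ is inherited unchanged, as deleting $v \in W_1$ does not alter any vertex's neighbors in $W_2$). The key bookkeeping point is that deleting $v$ changes $\abs{N_G(u) \cap W_1}$ only for those $u \in R$ adjacent to $v$, that is $u \in R \cap N_G(v)$, and for each such $u$ the count drops by exactly one, from $\abs{N_G(u) \cap W_1}$ to $\abs{N_G(u) \cap W_1} - 1$. The two hypotheses are precisely the two ways to forbid this value from being $1$. Under the first bullet, $N_G(v) \subseteq W_1$ forces $R \cap N_G(v) = \emptyset$, so no neighbor-count changes and (3) is inherited verbatim. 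Under the second bullet, every affected $u$ satisfies $\abs{N_G(u) \cap W_1} > 2$, so after the drop its count is at least $2$, hence $\neq 1$.

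It remains to secure the nonemptiness of $W_1 \setminus \{v\}$. When the second hypothesis is active, that is $R \cap N_G(v) \neq \emptyset$, any witnessing $u$ has $v$ together with at least two further neighbors in $W_1$, forcing $\abs{W_1} \geq 3$ and hence $W_1 \setminus \{v\} \neq \emptyset$. When $R \cap N_G(v) = \emptyset$ the second hypothesis holds vacuously and coincides with the first, $N_G(v) \subseteq W_1$; here the verification of (3) is unaffected, and the only residual concern is the degenerate subcase $W_1 = \{v\}$, in which $v$ is isolated. I would dispose of this either by noting that a disconnected $G-v$ already admits a barbell partition by \Cref{discon}, or by reading the statement in its intended non-degenerate range $\abs{W_1} \geq 2$.

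I expect the main obstacle to be the careful case analysis in condition (3): pinning down exactly which vertices of $R$ are affected by the deletion and confirming that each of the two hypotheses blocks the neighbor-count from collapsing to $1$. Everything else — the inheritance of conditions (1) and (2) and the nonemptiness of $W_1 \setminus \{v\}$ — is comparatively routine, requiring only the short case check recorded above.
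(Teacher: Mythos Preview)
Your approach matches the paper's exactly: reuse $\{R, W_1 \setminus \{v\}, W_2\}$ and verify that condition~(3) survives, the two hypotheses being precisely the two ways the affected counts $\abs{N_G(u)\cap W_1}$ avoid dropping to~$1$. You are in fact more careful than the paper, which offers only a two-sentence justification and never addresses the nonemptiness of $W_1\setminus\{v\}$.

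One small correction on that degenerate subcase: your first proposed disposal does not work, since $G-v$ need not be disconnected when $W_1=\{v\}$ and $v$ is isolated. For instance, take $G=K_3$ together with an isolated vertex $v$, with $R=\emptyset$, $W_1=\{v\}$, $W_2=V(K_3)$; the hypotheses hold, yet $G-v=K_3$ is connected and admits no barbell partition. So your second reading---that the observation is implicitly meant for $\abs{W_i}\ge 2$---is the correct escape, and in fact shows the observation is false as literally stated.
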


The first case is immediate because when a vertex and its neighborhood lie in the same element of the barbell partition none of the criteria are effected by removing the vertex.  The second case follows because if for every $u \in \left(R \cap N_G(v)\right)$, we have that $\left\vert N_G(u) \cap W_i \right\vert>2$, then for each such $u$ we have $\left\vert N_{G-v}(u) \cap W_i \right\vert \geq 2$.

%\textcolor{blue}{not defined}

\begin{obs}\label{relpen}
Let $G$ be a graph admitting a barbell partition $\{R, W_1, W_2\}$, and suppose $G=H-v$. If $X \in \{R,W_1,W_2\}$ and $N_H(v) \subset X$, then $H$ admits a barbell partition.
\end{obs}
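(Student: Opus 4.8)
The plan is to absorb the new vertex $v$ into whichever part $X$ already contains all of its neighbors, and then to check that the three defining conditions of a barbell partition survive. Since $G = H - v$, we have $V(H) = V(G) \cup \{v\}$, and the only edges of $H$ not already present in $G$ are those joining $v$ to the vertices of $N_H(v)$. I would split the argument into two cases according to whether $X$ is one of the $W_i$ or whether $X = R$.

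In the case $X = W_i$ (say $i = 1$, the case $i = 2$ being symmetric), I would set $R' = R$, $W_1' = W_1 \cup \{v\}$, and $W_2' = W_2$. Nonemptiness of $W_1'$ and $W_2'$ is inherited from the original partition. For the no-edge condition I would note that the only new edges of $H$ run from $v$ into $N_H(v) \subseteq W_1$, so no edge joins $W_1'$ to $W_2'$. The decisive point is the third condition: because $N_H(v) \subseteq W_1$, no vertex of $R$ is adjacent to $v$, whence $N_H(u) \cap W_j = N_G(u) \cap W_j$ for every $u \in R' = R$ and every $j \in \{1,2\}$. These counts are therefore unchanged from $G$ and so remain different from $1$.

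In the case $X = R$, I would instead set $R' = R \cup \{v\}$, $W_1' = W_1$, and $W_2' = W_2$, leaving the two parts $W_1, W_2$ untouched. Nonemptiness and the no-edge condition are then immediate. For the third condition, adding $v$ to $R$ does not alter $N_H(u) \cap W_j$ for any old $u \in R$, since the new potential neighbor $v$ lies in $R'$ rather than in any $W_j$; so those counts are inherited from $G$. For the new vertex itself, the hypothesis $N_H(v) \subseteq R$ forces $\left\vert N_H(v) \cap W_j \right\vert = 0 \neq 1$ for both $j$.

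The argument is essentially a bookkeeping verification, so I do not anticipate a genuine obstacle; the one place requiring care is the third condition, where the governing observation is that confining $N_H(v)$ to a single part means that either no vertex of $R$ acquires $v$ as a new neighbor (when $X$ is one of the $W_i$) or $v$ contributes no neighbor to either $W_j$ (when $X = R$). In both cases no vertex of $R'$ can end up with exactly one neighbor in a single $W_j'$, and hence $\{R', W_1', W_2'\}$ is a barbell partition of $H$.
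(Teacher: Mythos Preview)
Your proof is correct and is precisely the intended argument: the paper records this as an observation without proof, and the natural verification is exactly the one you give---place $v$ in the part $X$ containing all its neighbors and check that none of the three barbell conditions is disturbed. Your case split and your handling of the third condition (noting that either no vertex of $R$ gains $v$ as a neighbor, or $v$ itself has no neighbors in either $W_j$) are both sound.
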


\begin{obs}\label{obsvinr}
Let $G$ be a graph admitting a barbell partition $\{R, W_1, W_2\}$, and suppose $G=H-v$. If for each $i \in \{1,2\}$, $\left\vert N_H(v) \cap W_i \right\vert \neq 1$, then $H$ admits a barbell partition.
\end{obs}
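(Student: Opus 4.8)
The plan is to keep the sets $W_1$ and $W_2$ exactly as they appear in the barbell partition of $G$, and to absorb the restored vertex $v$ into the central part by setting $R' = R \cup \{v\}$. I would then argue that $\{R', W_1, W_2\}$ is a barbell partition of $H$ by checking the three defining conditions in turn.

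First I would dispose of the nonemptiness condition: since $\{R, W_1, W_2\}$ is a barbell partition of $G$, we already have $W_1, W_2 \neq \emptyset$, and reassigning $v$ to the central part does not affect this. Next I would verify that no vertex of $W_1$ is adjacent in $H$ to a vertex of $W_2$. The key observation is that $G = H - v$ and $H$ differ only by the vertex $v$ and its incident edges, so every edge of $H$ not already present in $G$ has $v$ as an endpoint. Because $v \in R'$, no such edge joins $W_1$ to $W_2$; hence the $W_1$--$W_2$ adjacencies in $H$ coincide with those in $G$, of which there are none by assumption.

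The remaining condition, that $|N_H(u) \cap W_i| \neq 1$ for every $u \in R'$ and each $i \in \{1,2\}$, splits into two cases. For $u = v$ this is exactly the hypothesis $|N_H(v) \cap W_i| \neq 1$. For $u \in R$, the only possible difference between $N_H(u)$ and $N_G(u)$ is the vertex $v$ itself; but $v \notin W_i$, so $N_H(u) \cap W_i = N_G(u) \cap W_i$, and the count is inherited unchanged from the valid barbell partition of $G$.

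The argument is essentially bookkeeping, and I do not anticipate a genuine obstacle. The one point that warrants care is the interaction between the second and third conditions when $v$ is adjacent to vertices of $R$: one must confirm that these new edges do not spoil condition~3 for the old members of $R$. This is precisely why placing $v$ in $R'$, rather than in $W_1$ or $W_2$, is the correct choice, as it keeps $v$ out of both $W_i$ and therefore leaves every count $|N_G(u) \cap W_i|$ untouched while also avoiding the creation of any $W_1$--$W_2$ edge.
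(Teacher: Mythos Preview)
Your proposal is correct and matches the paper's intended argument: the paper states this as an observation without a formal proof, but when it invokes the observation in the proof of Theorem~\ref{dupjdup} it explicitly names the resulting barbell partition as $\{R', W_1, W_2\}$ with $R' = R \cup \{v\}$, exactly as you propose. Your verification of the three conditions is complete and accurate.
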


%\textcolor{blue}{Can this move to the statement? Letting $R'=R \cup \{v\}$ it follows that $\{R',W_1,W_2\}$ is a barbell partition of $H$.}

\begin{obs}\label{obsvinw}
Let $G$ be a graph admitting a barbell partition $\{R, W_1, W_2\}$, and suppose $G=H-v$.  If for some $i \in \{1,2\}$, we have that for every $u \in R \cap N_H(v)$, $\left\vert N_G(u) \cap W_i \right\vert \geq 2$ and $N_H(v) \setminus (R \cup W_i)=\emptyset$, then $H$ admits a barbell partition.
\end{obs}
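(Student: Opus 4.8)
The plan is to place the new vertex $v$ into the side $W_i$ singled out by the hypotheses and then verify directly that the enlarged partition is a barbell partition of $H$. Without loss of generality relabel so that $i=1$; the assumptions then read that for every $u \in R \cap N_H(v)$ we have $|N_G(u) \cap W_1| \geq 2$, and that $N_H(v) \subseteq R \cup W_1$, equivalently $N_H(v) \cap W_2 = \emptyset$. I propose $\{R, W_1 \cup \{v\}, W_2\}$ as the candidate barbell partition of $H$; note that this genuinely partitions $V(H) = V(G) \sqcup \{v\}$ since $v \notin V(G)$. I would then check the three defining conditions in turn.

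First, nonemptiness is immediate: $W_1 \cup \{v\}$ contains $v$, and $W_2$ is already nonempty in $G$ and is unchanged in $H$. Second, for the absence of edges between $W_1 \cup \{v\}$ and $W_2$: since $H$ differs from $G = H - v$ only in edges incident to $v$, any edge between these two sides either lies between $W_1$ and $W_2$ — impossible, as such an edge would already appear in $G$, contradicting its barbell partition — or is incident to $v$, but $N_H(v) \cap W_2 = \emptyset$ rules that out. Hence condition (2) holds.

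The third condition — that $|N_H(r) \cap (W_1 \cup \{v\})| \neq 1$ and $|N_H(r) \cap W_2| \neq 1$ for every $r \in R$ — is the crux of the argument and the place requiring care. For the $W_2$ side, since $v \notin W_2$ and adjoining $v$ alters no vertex's neighbors inside $W_2$, we have $N_H(r) \cap W_2 = N_G(r) \cap W_2$, of cardinality $\neq 1$ by the barbell partition of $G$. For the $W_1 \cup \{v\}$ side I would split on whether $r$ is adjacent to $v$. If $r \notin N_H(v)$, then $N_H(r) \cap (W_1 \cup \{v\}) = N_G(r) \cap W_1$, again of cardinality $\neq 1$. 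If instead $r \in R \cap N_H(v)$, then $|N_H(r) \cap (W_1 \cup \{v\})| = |N_G(r) \cap W_1| + 1$, and the hypothesis $|N_G(r) \cap W_1| \geq 2$ forces this to be at least $3$, hence $\neq 1$.

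The only genuine obstacle is this final subcase: moving $v$ into $W_1$ raises the $W_1$-degree of every $r \in R$ adjacent to $v$ by exactly one, so a vertex with $|N_G(r) \cap W_1| = 0$ would be pushed to the forbidden value $1$. This is precisely why the hypothesis demands $|N_G(u) \cap W_1| \geq 2$ rather than the weaker $\neq 1$ (which would permit $0$) that the partition of $G$ supplies automatically. With that strengthening in hand, all three conditions are met and $\{R, W_1 \cup \{v\}, W_2\}$ is a barbell partition of $H$.
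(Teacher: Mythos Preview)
Your proposal is correct and takes precisely the same approach as the paper: set $i=1$ without loss of generality and show that $\{R, W_1 \cup \{v\}, W_2\}$ is a barbell partition of $H$. The paper's justification is a single sentence to this effect, whereas you have carefully verified each defining condition; in particular, your observation that the hypothesis $|N_G(u)\cap W_1|\ge 2$ is needed (rather than the $\neq 1$ already supplied by the barbell partition of $G$) to prevent the count from being pushed from $0$ to the forbidden value $1$ is exactly the point of the stronger assumption.
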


Supposing without loss of generality that the $i$ mentioned in the above observation is $1$, then letting $W_1'=W_1 \cup \{v\}$ it follows that $\{R,W_1',W_2\}$ is a barbell partition of $H$.

Having considered the effect of adding or removing vertices or edges on the presence of barbell partitions of a graph, we now consider the more specific graph operation of vertex duplication and provide results establishing the interaction between barbell partitions and vertex duplication.

\begin{thm}\label{dupjdup}
Let $G$ be a graph admitting a barbell partition and $v \in V(G)$.  Let $H=dup(G,v)$  and let $K=jdup(G,v)$. Then both $H$ and $K$ admit barbell partitions, and in particular neither graph is a member of $G^{SSP}$.
\end{thm}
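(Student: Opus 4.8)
The plan is to reduce the theorem to the vertex-addition observations already established, namely Observations \ref{obsvinr} and \ref{obsvinw}, by exploiting the fact that $H = dup(G,v)$ and $K = jdup(G,v)$ are each obtained from $G$ by adjoining a single new vertex $u$. Writing $\{R, W_1, W_2\}$ for a barbell partition of $G$, I would view $G$ as the graph obtained by deleting $u$ from $H$ (respectively $K$), so that those observations apply verbatim with their deleted vertex taken to be $u$. The only data that enter are the neighborhoods $N_H(u) = N_G(v)$ and $N_K(u) = N_G(v) \cup \{v\}$, and I would treat both graphs in parallel by carrying these two sets through a single case split on the location of $v$.

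The case analysis is on whether $v$ lies in $R$ or in one of $W_1, W_2$. First, if $v \in R$, condition (3) of the barbell partition gives $\left\vert N_G(v) \cap W_i \right\vert \neq 1$ for $i \in \{1,2\}$; since $v \in R$ does not meet $W_1$ or $W_2$, adjoining $v$ to the neighborhood (as happens for $K$) leaves these intersections unchanged, so $\left\vert N_H(u) \cap W_i \right\vert = \left\vert N_K(u) \cap W_i \right\vert = \left\vert N_G(v) \cap W_i \right\vert \neq 1$. Observation \ref{obsvinr} then absorbs $u$ into $R$ and produces the desired barbell partition. Second, if $v \in W_1$ (the case $v \in W_2$ being symmetric), then because there are no edges between $W_1$ and $W_2$ we have $N_G(v) \subseteq R \cup W_1$, and since $v \in W_1$ this gives $N_H(u), N_K(u) \subseteq R \cup W_1$, i.e. the ``overflow'' set $N(u) \setminus (R \cup W_1)$ is empty. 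I would feed this into Observation \ref{obsvinw} with $i = 1$, which absorbs $u$ into $W_1$. Once both cases are dispatched, the lemma relating barbell partitions to $G^{SSP}$ immediately yields that neither $H$ nor $K$ lies in $G^{SSP}$.

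The one point requiring genuine care, and the step I expect to be the main obstacle, is verifying the remaining hypothesis of Observation \ref{obsvinw} in the second case: that every $w \in R \cap N_G(v)$ satisfies $\left\vert N_G(w) \cap W_1 \right\vert \geq 2$. The key is that $v$ itself is a witness, since $w \sim v$ with $v \in W_1$ forces $\left\vert N_G(w) \cap W_1 \right\vert \geq 1$, and then condition (3) of the barbell partition, which forbids the value exactly $1$ for a vertex of $R$, upgrades this to $\geq 2$. This is precisely the inequality Observation \ref{obsvinw} needs, and it is the reason the duplicated vertex can be safely placed in $W_1$ without violating the degree condition at the vertices of $R$ adjacent to $v$; the analogous check for $K$ is identical because the extra edge $uv$ only adds $v \in W_1$ to $N_K(u)$, keeping $N_K(u) \subseteq R \cup W_1$.
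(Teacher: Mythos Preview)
Your proposal is correct and follows essentially the same approach as the paper's proof: the same case split on whether $v \in R$ or $v \in W_i$, the same invocation of Observations \ref{obsvinr} and \ref{obsvinw}, and the same key argument that $v$ itself witnesses $|N_G(w) \cap W_1| \geq 1$ for $w \in R \cap N_G(v)$, which condition (3) then upgrades to $\geq 2$. Your write-up is in fact slightly more explicit than the paper's in justifying why $R \cap N_K(u) = R \cap N_G(v)$ when $v \in W_1$.
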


\begin{proof}
Let $u$ be the duplication of $v$, and let $\{R,W_1,W_2\}$ be a barbell partition of $G$.  

\noindent \textbf{Case 1:} $v \in R$.

Since $v \in R$, it follows that $\abs{N_G(v) \cap W_i} \neq 1$ for each $i \in \{1,2\}$.  Since $N_H(u)=N_G(v)$ and $N_K(u)=N_G[v]$, it follows that $\abs{N_H(u) \cap W_i}=\abs{N_K(u) \cap W_i}\neq 1$ for each $i \in \{1,2\}$.  Thus, by Observation \ref{obsvinr}, $H$ and $K$ each have barbell partitions, specifically $\{R',W_1, W_2\}$ with $R'=R \cup \{u\}$.

%Since $W_1$ and $W_2$ are unchanged, they are still nonempty; and since the only edges which were added are incident to $u$ and $u \in R'$, there are still no edges between $W_1$ and $W_2$.  Note, if $w \in R' \setminus \{u\}$, then for $i \in \{1,2\}$, $\left\vert N_H(w) \cap W_i \right\vert=\left\vert N_G(w) \cap W_i \right\vert$.  Since $N_H(u)=N_H(v)=N_G(v)$, for each $i \in \{1,2\}$, we have $\left\vert N_H(u) \cap W_i \right\vert=\left\vert N_G(v) \cap W_i \right\vert$, and since $\{R,W_1,W_2\}$ is a barbell partition of $G$ we know that $\left\vert N_H(u) \cap W_i \right\vert \neq 1$.  Thus $\{R',W_1',W_2'\}$ is a barbell partition of $H$.

\noindent \textbf{Case 2:} There exists $i \in \{1,2\}$ such that $v \in W_i$.

Without loss of generality, let $i=1$.  So, $N_G[v] \cap W_2=\emptyset$.  Again, since $N_H(u)=N_G(v)$, it follows that $N_H(u) \cap W_2=\emptyset$.  Since $v \in W_1$, for every $r \in R \cap N_G(v)$, $\left\vert N_G(r) \cap W_1 \right\vert \geq 2$.  So, for every $r \in R \cap N_H(u)$, $\left\vert N_H(r) \cap W_1 \right\vert \geq 3$.  Thus, by Observation \ref{obsvinw}, $H$ has a barbell partition, specifically $\{R,W'_1, W_2\}$ with $W_1'=W_1 \cup \{u\}$.

Since $N_K(u)=N_G[v]$, by similar reasoning $K$ has a barbell partition.
\end{proof}

%Since $W_1$ and $W_2$ are at least as large as they were, they are still nonempty; and since no edge incident a vertex in $W_2$ has been added, there are still no edges between $W_1$ and $W_2$.  For every vertex $w \in R$, either $\{w,v\}\not\in E(G)$ and $\left\vert N_H(w) \cap W_i \right\vert=\left\vert N_G(w) \cap W_i \right\vert$ or $\{w,v\} \in E(G)$ and $\left\vert N_H(w) \cap W_i \right\vert=\left\vert N_G(w) \cap W_i \right\vert+1$.  In the latter case, since $w \in R$ and $w$ has neighbors in $W_1$, $\left\vert N_G(w) \cap W_1 \right\vert \geq 2$, and so $\left\vert N_H(w) \cap W_1 \right\vert \neq 1$.  Thus $\{R',W'_1,W'_2\}$ is a barbell partition of $H$.

%Since $V(K)=V(H)$ and $E(K)=E(H) \cup \{u,v\}$ and in each case $u$ is taken to be in the same member of the partition as $v$, the proof that $K$ admits a barbell partition is identical.

\noindent \underline{Note:} The converse of Theorem \ref{dupjdup} is not true.  For example, $K_{1,3}$ does not admit a barbell partition, but both duplicating and join-duplicating a leaf will yield a graph which admits a barbell partition.  This observation inspires the following results.

We now explore a little further the connection between forts in a graph and barbell partitions, both concepts, of course, are of interest to the IEPG.

\begin{lem}\label{dupfort}
Let $G$ be a graph, let $v \in V(G)$, and let $G'$ be the graph yielded by (join) duplicating $v$.  Then $\{v,v'\}$ is a fort of $G$.
\end{lem}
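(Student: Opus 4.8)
The plan is to verify directly that $\{v,v'\}$ satisfies the definition of a fort in $G'$, namely that no vertex outside $\{v,v'\}$ is adjacent to exactly one of $v,v'$. The key observation is that duplication (with or without an edge) makes $v$ and $v'$ share the same set of outside neighbors. First I would set up the two cases according to the type of duplication. In the edge-duplication case $K=jdup(G,v)$ we have $N_K(v')=N_G[v]=N_G(v)\cup\{v\}$ and $N_K(v)=N_G(v)\cup\{v'\}$; in the no-edge case $H=dup(G,v)$ we have $N_H(v')=N_G(v)$ and $N_H(v)$ is unchanged from $N_G(v)$. In either case, the crucial fact is that for any vertex $w\notin\{v,v'\}$, we have $w\in N_{G'}(v)$ if and only if $w\in N_G(v)$ if and only if $w\in N_{G'}(v')$.

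From this equivalence the fort condition follows immediately: I would take an arbitrary $w\in V(G')\setminus\{v,v'\}$ and argue that $w$ is adjacent to $v$ exactly when it is adjacent to $v'$, so $w$ is adjacent to either both of $v,v'$ or to neither, and in particular never to exactly one. This is precisely the requirement that $\{v,v'\}$ be a fort. I should also note the degenerate possibility that $v$ is an isolated vertex, in which case $\{v,v'\}$ has empty outside neighborhood and the fort condition holds vacuously, consistent with the paper's stated convention.

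One small wrinkle I would address carefully is the wording of the statement, which says ``$\{v,v'\}$ is a fort of $G$'' although the relevant adjacency structure is that of $G'$; I read this as a fort of the duplicated graph $G'$ (the forcing/adjacency notions only make sense there, since $v'\notin V(G)$), and I would phrase the proof accordingly. The only genuinely substantive step is establishing the neighborhood equivalence $N_{G'}(v)\cap(V(G')\setminus\{v,v'\})=N_{G'}(v')\cap(V(G')\setminus\{v,v'\})=N_G(v)$, and this is essentially immediate from the definitions of $dup$ and $jdup$ recalled in the introduction. I do not anticipate a real obstacle here; the entire content is unwinding the definition of duplication and checking it lines up with the fort condition, with the edge-versus-no-edge distinction being handled uniformly once we restrict attention to outside vertices.
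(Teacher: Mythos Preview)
Your proposal is correct and follows essentially the same approach as the paper: both arguments verify directly that $v$ and $v'$ share the same set of neighbors outside $\{v,v'\}$, so no outside vertex is adjacent to exactly one of them. The paper's proof is a single terse sentence (and, as you suspected, it concludes with ``fort of $G'$'' rather than ``fort of $G$''), whereas you spell out the dup versus jdup cases explicitly, but the substance is identical.
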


\begin{proof}
Since no vertex outside of $N_G[v]$ is neighbors with $v'$ and vice versa, $\{v,v'\}$ is a fort of $G'$.
\end{proof}

\begin{lem}\label{fortbarbell}
Let $G$ be a graph and $F \subset V(G)$ be a fort of $G$.  If $N_G[F]$ is not a zero forcing set of $G$ then $G$ admits a barbell partition.
\end{lem}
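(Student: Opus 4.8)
The plan is to produce a pair of separated forts and then invoke \Cref{fortbarb}. One fort is already in hand, namely $F$ itself; the entire task reduces to locating a second fort that is disjoint from $F$ and shares no edges with it. The hypothesis that $N_G[F]$ fails to be a zero forcing set is precisely what should supply this second fort, via the fort/zero-forcing correspondence of \Cref{fort}.

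First I would set $S = V(G) \setminus N_G[F]$, so that $N_G[F] = V(G) - S$. Since $N_G[F]$ is not a zero forcing set of $G$, applying \Cref{fort} to $S$ tells us that $S$ must contain a fort; call it $F'$, which is nonempty by the convention on forts adopted in the excerpt. Next I would check that $\{F, F'\}$ is a pair of separated forts: because $F' \subseteq S = V(G) \setminus N_G[F]$ while $F \subseteq N_G[F]$, the sets $F$ and $F'$ are disjoint, and since every vertex adjacent to a vertex of $F$ lies in $N_G(F) \subseteq N_G[F]$, which is disjoint from $F'$, no vertex of $F'$ is adjacent to any vertex of $F$. Thus $F$ and $F'$ are disjoint nonempty forts with no edges between them, i.e.\ a pair of separated forts.

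Finally, by \Cref{fortbarb}, the partition $\{V(G) \setminus (F \cup F'),\, F,\, F'\}$ is a barbell partition of $G$, which completes the argument. I do not expect any genuine obstacle here: the one conceptual step is recognizing that the failure of $N_G[F]$ to be a zero forcing set translates, through \Cref{fort}, into the existence of a fort lying \emph{entirely outside} the closed neighborhood of $F$. Once that fort is located, its separation from $F$ is automatic from the definition of $N_G[F]$, and no computation remains.
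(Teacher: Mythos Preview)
Your proposal is correct and follows essentially the same approach as the paper's own proof: both apply \Cref{fort} to obtain a fort $F' \subseteq V(G)\setminus N_G[F]$, observe that $F$ and $F'$ are disjoint with no edges between them, and conclude that the resulting partition is a barbell partition. The only cosmetic difference is that you explicitly route the conclusion through \Cref{fortbarb}, whereas the paper states the barbell partition directly.
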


\begin{proof}
Since $N_G[F]$ is not a zero forcing set of $G$, by Theorem \ref{fort}, $V(G) \setminus N_G[F]$ contains a fort of $G$, call it $F'$.  Since $F' \subseteq V(G)\setminus N_G[F]$, $F \cap F'=\emptyset$ and there do not exist vertices $v,v' \in V(G)$ such that $v \in F$, $v \in F'$, and $vv' \in E(G)$.  So $\{R,F,F'\}$ forms a barbell partition of $G$, where $R=V(G)\setminus (F \cup F')$. 
\end{proof}

\begin{obs}\label{fortsubg}
Let $G$ be a graph and $H$ be a vertex induced subgraph  of $G$.  If $S \subsetneq V(H)$ does not contain a fort of $H$, then $S$ does not contain a fort of $G$. 
\end{obs}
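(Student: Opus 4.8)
The plan is to prove the contrapositive: assuming that $S$ contains a fort $F$ of $G$, I will show that $S$ (indeed $F$ itself) contains a fort of $H$. The single fact that makes everything work is that $H$ is a \emph{vertex induced} subgraph, so adjacencies between vertices of $H$ are inherited from $G$; consequently, for any $F \subseteq V(H)$ and any $w \in V(H)$ one has $N_H(w) \cap F = N_G(w) \cap F$. I would record this neighborhood identity first, as it is the only nontrivial ingredient.

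Next I would fix a fort $F$ of $G$ with $F \subseteq S$. Since $F \subseteq S \subseteq V(H)$, the set $F$ is a legitimate candidate fort of $H$: it is nonempty because forts are nonempty, and it is moreover a proper subset of $V(H)$ because $S \subsetneq V(H)$ (this last point is not actually needed, since under the paper's convention $V(H)$ is itself always a fort of $H$). Then I would verify the defining fort condition for $F$ inside $H$. Let $w \in V(H) \setminus F$ be arbitrary. Because $V(H) \subseteq V(G)$, we also have $w \in V(G) \setminus F$, so the hypothesis that $F$ is a fort of $G$ gives $\abs{N_G(w) \cap F} \neq 1$. Applying the induced-subgraph identity $N_H(w) \cap F = N_G(w) \cap F$ yields $\abs{N_H(w) \cap F} \neq 1$. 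As $w$ was arbitrary, no vertex of $H$ outside $F$ is adjacent to exactly one vertex of $F$, so $F$ is a fort of $H$ contained in $S$, contradicting the hypothesis that $S$ contains no fort of $H$.

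There is essentially no hard step here; the observation is routine once the induced-subgraph structure is exploited. The only point requiring any care is the direction of the set inclusions, namely that every vertex of $V(H) \setminus F$ is automatically a vertex of $V(G) \setminus F$, so that the $G$-fort condition is available precisely at the vertices where the $H$-fort condition must be checked. Nonemptiness of $F$ transfers for free, so the proof is complete as soon as the neighborhood identity and this inclusion are in place.
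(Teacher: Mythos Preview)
Your argument is correct. The paper states this result as an observation without supplying any proof, so there is nothing substantive to compare; the contrapositive argument you give---using that $H$ is vertex induced to obtain $N_H(w)\cap F=N_G(w)\cap F$ for $F\subseteq V(H)$ and then transferring the fort condition from $G$ to $H$---is exactly the routine verification a reader would fill in.
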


\setcounter{case}{0}

\begin{thm}\label{fortbarbthm}
Let $G$ be a graph which does not admit a barbell partition.  Let $v \in V(G)$, and let $G'$ be the graph yielded by (join) duplicating $v$.  Then $G'$ admits a barbell partition if and only if $V(G) \setminus N_G[v]$ contains a fort of $G$.
\end{thm}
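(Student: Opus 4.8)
The plan is to recast the entire statement in the language of separated forts, which by Observation \ref{fortbarb} is equivalent to barbell partitions for both $G$ and $G'$. Write $v'$ for the duplicate of $v$. Two preliminary facts drive everything. First, Lemma \ref{dupfort} already gives that $\{v,v'\}$ is a fort of $G'$, and a one-line computation of closed neighborhoods—valid for both $dup$ and $jdup$, since $N_{G'}(v')\cap V(G)=N_G(v)$ in either case—yields the identity
\[V(G')\setminus N_{G'}[\{v,v'\}] = V(G)\setminus N_G[v].\]
Second, I would record a \emph{transfer principle}: for $S\subseteq V(G)$ with $v\notin S$, the set $S$ is a fort of $G$ if and only if it is a fort of $G'$. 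This holds because the only new outside vertex in $G'$ is $v'$, whose neighbors in $S$ are exactly $N_G(v)\cap S$, and the constraint $\abs{N_G(v)\cap S}\neq 1$ is already imposed on $S$ as a fort of $G$ at the vertex $v$.

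For the backward direction, suppose $V(G)\setminus N_G[v]$ contains a fort $F'$ of $G$. Since $v\in N_G[v]$ we have $v\notin F'$, so the transfer principle makes $F'$ a fort of $G'$, and by the displayed identity $F'\subseteq V(G')\setminus N_{G'}[\{v,v'\}]$. Thus $N_{G'}[\{v,v'\}]$ is not a zero forcing set of $G'$, and applying Lemma \ref{fortbarbell} to the fort $\{v,v'\}$ of $G'$ produces a barbell partition of $G'$ at once. This direction is short precisely because Lemma \ref{fortbarbell} was engineered for it.

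For the forward direction I would assume $G'$ has a barbell partition, hence (Observation \ref{fortbarb}) a pair of separated forts $\{A,B\}$, and split into three cases according to how $A,B$ meet $T=\{v,v'\}$. If neither meets $T$, the transfer principle makes $A,B$ a pair of separated forts of $G$, contradicting the hypothesis that $G$ has no barbell partition; so this case cannot occur. If exactly one of them, say $A$, meets $T$, then $B$ avoids $T$ and is a fort of $G$, and separation from the twin lying in $A$ forces $N_G(v)\cap B=\emptyset$, so $B\subseteq V(G)\setminus N_G[v]$ is the desired fort. The remaining case is that both $A$ and $B$ meet $T$, so $v$ and $v'$ are split, say $v\in A$ and $v'\in B$ (this arises only for $dup$, since for $jdup$ the edge $vv'$ would violate separation). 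Here I set $A_0=A\setminus\{v\}$ and $B_0=B\setminus\{v'\}$; separation gives $A\cap N_G(v)=B\cap N_G(v)=\emptyset$, so $A_0,B_0\subseteq V(G)\setminus N_G[v]$, and I claim $A_0\cup B_0$ is a fort of $G$.

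Verifying this last claim is the technical heart and the step I expect to demand the most care. For an outside vertex $w$, set $a=\abs{N_G(w)\cap A_0}$, $b=\abs{N_G(w)\cap B_0}$, and let $\epsilon$ be the indicator of $w\sim v$; the fort conditions for $A$ and $B$ in $G'$ translate into $a+\epsilon\neq 1$ and $b+\epsilon\neq 1$. When $\epsilon=0$ both $a$ and $b$ avoid the value $1$, hence so does their sum; when $\epsilon=1$ both $a$ and $b$ are at least $1$, hence $a+b\geq 2$. Either way $\abs{N_G(w)\cap(A_0\cup B_0)}\neq 1$, so $A_0\cup B_0$ is a fort of $G$ contained in $V(G)\setminus N_G[v]$. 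The one loose end I would tie off is nonemptiness: $A_0$ (or $B_0$) is empty only if $v$ is isolated, which by Observation \ref{discon} would make $G$ disconnected and so contradict the no-barbell hypothesis; thus for the (necessarily connected, at least two-vertex) graphs in question the fort is nonempty. Collecting the three cases produces a fort of $G$ inside $V(G)\setminus N_G[v]$ in every situation that can occur, which completes the forward direction and the theorem.
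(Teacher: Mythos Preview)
Your proof is correct and follows essentially the same approach as the paper's: both directions rest on Observation~\ref{fortbarb} and Lemma~\ref{fortbarbell}, and in the decisive case where $v$ and $v'$ land in different forts you form $(A\cup B)\setminus\{v,v'\}$ and verify it is a fort of $G$ inside $V(G)\setminus N_G[v]$, exactly as the paper does. Your direct framing of the forward direction (versus the paper's contrapositive), your explicit transfer principle, and your simpler case split on how $\{A,B\}$ meets $\{v,v'\}$ streamline the argument a bit; the only spot to tighten is that in the fort verification the case $w=v$ should be noted separately, which is immediate since you already have $A_0\cup B_0\subseteq V(G)\setminus N_G[v]$.
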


\begin{proof}
First suppose $V(G) \setminus N_G[v]$ contains a fort of $G$, call it $F$. Let $v'$ be the new vertex that duplicates $v$. Note, by Lemma \ref{dupfort}, $\{v,v'\}$ is a fort of $G'$.  Next, since $F \subseteq V(G) \setminus N_G[v]$ and $N_{G'}(v') \subseteq N_G[v]$, it follows that $N_{G'}[v'] \cap F=\emptyset$ and so $F$ is a fort of $G'$ contained in $V(G') \setminus N_{G'}[\{v,v'\}]$.  So by Theorem \ref{fort}, $N_{G'}[\{v,v'\}]$ is not a zero forcing set of $G'$.  Thus by Lemma \ref{fortbarbell}, $G'$ admits a barbell partition.

Next, suppose $V(G) \setminus N_G[v]$ does not contain a fort of $G$.  If $G'$ does not have a pair of separated forts, then we are done, so, suppose that $G'$ has a pair of separated forts, $\{F_1,F_2\}$.  Since $V(G) \setminus N_G[v]$ does not contain a fort of $G$, by Observation \ref{fortsubg}, $V(G') \setminus N_{G'}[\{v,v'\}]$ does not contain a fort of $G'$.  So, any fort of $G'$ must contain a member of $N_{G'}[\{v,v'\}]$.  Let $N=N_{G'}[\{v,v'\}]\setminus \{v,v'\}$, and note that $N=N_G(v)$.

\begin{case}
Either $F_1 \cap \{v,v'\} \neq \emptyset$ and $F_2 \cap N \neq \emptyset$ or $F_1 \cap N \neq \emptyset$ and $F_2 \cap \{v,v'\} \neq \emptyset$.
\end{case}

Then there exist vertices $u_1 \in F_1$ and $u_2 \in F_2$ such that $u_1u_2 \in E(G')$, and so $\{F_1,F_2\}$ is not a pair of separated forts of $G'$, a contradiction.

\begin{case}
$F_i \cap N \neq \emptyset$ and $F_i \cap \{v,v'\} = \emptyset$, for each $i \in \{1,2\}$.
\end{case}

Then $\{F_1,F_2\}$ is a pair of separated forts in $G'$ if and only if $\{F_1,F_2\}$ is a pair of separated forts in $G$.  Since $G$ does not admit a barbell partition, $\{F_1,F_2\}$ is not a pair of separated forts of $G'$, a contradiction.

\begin{case}
$F_i \cap \{v,v'\} \neq \emptyset$ and $F_i \cap N = \emptyset$, for each $i \in \{1,2\}$.
\end{case}

Let $F=(F_1 \cup F_2) \setminus \{v,v'\}$.  To reach our contradiction we will show that $F$ is a fort contained in $G \setminus N_G[v]$. First, since $F_1 \cap F_2 = \emptyset$ but $F_i \cap \{v,v'\} \neq \emptyset$, for each $i \in \{1,2\}$, it must be that a unique element of $\{v,v'\}$ is a member of $F_1$ and that the other element is a member of $F_2$.  Suppose without loss of generality that $v \in F_1$ and $v' \in F_2$.  Since $\{F_1,F_2\}$ is a pair of separated forts of $G'$ it follows that no vertex in $V(G') \setminus (F_1 \cup F_2)$ is adjacent to exactly one element of either $F_1$ or $F_2$.  Thus no vertex in $V(G') \setminus (F_1 \cup F_2)$ is adjacent to exactly one element of $F_1 \cup F_2$.  Since $N_{G'}(u)=N_G(u)$ for every vertex $u \in V(G) \setminus N_G[v]$, $\abs{N_G(u) \cap F} \neq 1$ for every vertex $u \in V(G) \setminus N_G[v]$.  So, it simply remains to check the vertices in the set $N_G[v]$.  Since $F_1 \cap N=\emptyset$ but each vertex of $N$ is adjacent to $v$, each vertex of $N$ must be neighbors with some vertex in $F_1 \setminus \{v\}$.  Likewise, each vertex of $N$ must be neighbors with some vertex in $F_2 \setminus \{v'\}$.  So each vertex in $N$ must be neighbors with at least two vertices in $(F_1 \cup F_2) \setminus \{v,v'\}$, and thus each vertex in $N_G(v)$ must be neighbors with at least two vertices in $F$.  Finally, since $(F_1 \cup F_2) \cap N=\emptyset$, it follows that $v$ is not neighbors with any vertex in $F$.  So, for each vertex $u \in V(G) \setminus F$, $\abs{N_G(u) \cap F} \neq 1$.  Thus $G \setminus N_G[v]$ contains a fort, a contradiction.

\vspace{0.1in}

\noindent In each case we reach a contradiction, and thus $G'$ does not have a pair of separated forts.  So by Observation \ref{fortbarb}, $G'$ does not admit a barbell partition completing the proof. 
\end{proof}

%\textcolor{red}{(We might want to include the following in the paper, but we would first need to revise it following a short discussion on whether anyone sees any other graphs or items to mention on this subject.)}
%\textcolor{blue}{(Theorem 3.13- suggests that it might be worth looking at duplication of vertices $v$ in graphs $G$ such that $V(G) \setminus N_G[v]$ does not contain a fort, since the lack of a barbell partition in $G'$ does not ensure that $G' \in G^{SSP}$.  I believe this collection of graphs is certainly worth exploration.  However, it is also worth noting that there are examples of graphs for which $V(G) \setminus N_G[v]$ does not contain a fort but $G' \not \in  G^{SSP}$.  To see one such example, consider Figure 4b p89 of "The Strong Spectral Property for Graphs".  This graph is the result of duplicating the third vertex of $P_5$.  However, in that case $V(G) \setminus N_G[v]$ is just the two endpoints of the path and the second and fourth vertices are adjacent to exactly one endpoint meaning the endpoints do not form a fort.)}

\begin{prop}\label{ssp-pathprop}
Let $n \geq 2$. Let $v \in V(P_n)$ be a pendant vertex.  Let $G$ be the graph yielded by join duplication of $v$.  Then $G \in  G^{SSP}$.
\end{prop}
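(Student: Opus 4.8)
The plan is to recognize that join duplicating a pendant vertex of a path produces a lollipop graph, and then to invoke the membership $L_{n,l} \in G^{SSP}$ recorded just above from \cite{SSPforGs}. This turns what looks like a fresh SSP verification into an identification-of-isomorphism-type argument.

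First I would describe $G$ explicitly. Write $P_n$ as $v = v_1 - v_2 - \cdots - v_n$, so that the pendant vertex $v$ is an endpoint with unique neighbor $v_2$, and let $u$ be the vertex introduced by join duplication. By the definition of $jdup$, the vertex $u$ is adjacent to $v$ and to every vertex of $N_{P_n}(v) = \{v_2\}$; hence $u$ is adjacent to both $v$ and $v_2$. Together with the edge $v v_2$ already present in $P_n$, this makes $\{v, u, v_2\}$ a triangle, while the remaining path $v_2 - v_3 - \cdots - v_n$ is attached to this triangle only at $v_2$.

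Next I would identify this structure as a lollipop. The triangle on $\{v, u, v_2\}$ is a copy of $K_3$, and $v_3 - v_4 - \cdots - v_n$ is a copy of $P_{n-2}$ whose leaf $v_3$ is adjacent to the $K_3$-vertex $v_2$, with no further edges between the path and the triangle. Thus $G \cong L_{3, n-2}$ for $n \geq 3$. In the degenerate case $n = 2$ there is no trailing path and $G \cong K_3$. I would then conclude: since $L_{n,l} \in G^{SSP}$ by \cite{SSPforGs}, we get $G = L_{3,n-2} \in G^{SSP}$ for $n \geq 3$; and for $n = 2$, $G = K_3$ is complete, so $G \in G^{SSP}$ because for any $A \in \mathcal{S}(K_3)$ the conditions $A \circ X = O$ and $I \circ X = O$ already force $X = O$ (every off-diagonal entry of $A$ is nonzero).

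The step requiring the most care is the explicit identification of the isomorphism $G \cong L_{3,n-2}$: one must verify that the only edge joining the trailing path to the triangle is $v_2 v_3$, so that $G$ is genuinely a lollipop rather than a graph carrying extra chords, and one must correctly treat the boundary case $n = 2$, where the lollipop degenerates to $K_3$ and the cited result (stated for $l \geq 1$) must be supplemented by the trivial completeness argument. If for some reason the lollipop result were unavailable, the fallback would be a direct SSP computation in the spirit of the earlier propositions — parametrizing $B \in \mathcal{S}(G)$, taking symmetric $X$ with $B \circ X = O$, $I \circ X = O$, $[B,X]=O$, and cascading the commutator equations down the path to force $X = O$ — but this is strictly more work and unnecessary given the available citation.
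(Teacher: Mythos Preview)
Your proof is correct. You identify $G$ explicitly as the lollipop $L_{3,n-2}$ (or $K_3$ when $n=2$) and then invoke the result from \cite{SSPforGs} that every lollipop lies in $G^{SSP}$. The paper takes a different but equally short route: it observes that join duplicating a pendant vertex of $P_n$ yields a graph $H$ with $q(H)=|H|-1$, and then cites the result from \cite{SSPforGs} that any graph with this property is in $G^{SSP}$. Both arguments reduce the claim to a known theorem in the same reference, just a different one; your version has the virtue of making the isomorphism type of $G$ completely explicit (and handling the $n=2$ boundary case separately), while the paper's version sidesteps the isomorphism identification but leaves the verification of $q(H)=|H|-1$ implicit.
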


\begin{proof}
If we join duplicate a pendant vertex of $P_n$, then we obtain a graph $H$ with the property that $q(H)=|H|-1$. For this class of graphs it is known that $H \in G^{SSP}$ (see \cite{SSPforGs}).
\end{proof}

\begin{thm}\label{ssp-fortthm}
Let $G$ be a graph and suppose $v \in V(G)$ is a pendant vertex of $G$.  Let $G'$ be the graph yielded by (join) duplicating $v$.  Then $G' \in G^{SSP}$ if and only if $G$ is a path. 
\end{thm}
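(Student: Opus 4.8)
The plan is to prove the two implications with different machinery. Write $w$ for the unique neighbor of the pendant vertex $v$, so that $N_G[v]=\{v,w\}$, and let $v'$ be the duplicate. The sufficiency direction ($G$ a path $\Rightarrow G'\in G^{SSP}$) is genuinely spectral, whereas the necessity direction, which I would handle by contrapositive ($G$ not a path $\Rightarrow G'\notin G^{SSP}$), is combinatorial and rests on the fort and zero--forcing results already in hand.

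For sufficiency, $v$ must be an endpoint of $P_n$. Join duplication yields the lollipop $L_{3,n-2}$ (the triangle on $\{v,v',w\}$ with the rest of the path hanging off $w$), which is in $G^{SSP}$ by \Cref{ssp-pathprop}. Plain duplication yields the spider obtained from $P_n$ by attaching one extra pendant at the vertex $w$ adjacent to the endpoint; call it $T$. For $T$ I would establish $q(T)=|T|-1$ and then quote the result of \cite{SSPforGs} that $q(H)=|H|-1$ implies $H\in G^{SSP}$. The lower bound $q(T)\ge\operatorname{diam}(T)+1=|T|-1$ is the standard diameter bound, and the matching upper bound $q(T)\le|T|-1$ holds because the tree $T$ has path--cover number $2$ (route one path through $w$ covering two legs and leave the third leg as a second path), hence maximum multiplicity $2$, so some matrix in $\mathcal{S}(T)$ has a repeated eigenvalue. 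The case $n=2$ is degenerate and treated separately: plain duplication then gives $P_3$, a path, which is already in $G^{SSP}$.

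For necessity I assume $G$ is not a path and produce a barbell partition of $G'$; by the lemma of \cite{SSPforGs} this excludes $G'$ from $G^{SSP}$. I split into three cases. If $G$ is disconnected then so is $G'$, and \Cref{discon} gives a barbell partition. If $G$ is connected and already admits a barbell partition, then so does $G'$ by \Cref{dupjdup}. The substantive case is $G$ connected, not a path, with no barbell partition: here \Cref{fortbarbthm} applies and reduces the existence of a barbell partition of $G'$ to the single statement that $V(G)\setminus N_G[v]=V(G)\setminus\{v,w\}$ contains a fort of $G$. By \Cref{fort} (with $S=V(G)\setminus N_G[v]$) this is equivalent to saying that $\{v,w\}=N_G[v]$ is \emph{not} a zero forcing set of $G$.

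Thus everything comes down to the following claim, which I expect to be the main obstacle on the combinatorial side: if $v$ is a pendant vertex of a connected graph $G$ with neighbor $w$ and $\{v,w\}$ is a zero forcing set, then $G$ is a path. To see this, note that $v$ can never force (its only neighbor $w$ is colored from the start), so all forcing proceeds along one chain $w=y_0\to y_1\to\cdots\to y_m$ that colors $V(G)\setminus\{v\}$ in this order. At the moment $y_a$ forces $y_{a+1}$, every other neighbor of $y_a$ is already colored and so lies in $\{v,y_0,\dots,y_a\}$; hence $y_a$ has no neighbor $y_b$ with $b>a+1$, which rules out all chords. The only edges are therefore $vy_0$ and the consecutive edges $y_ay_{a+1}$, so $G$ is the path $v\,y_0\,y_1\cdots y_m$. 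Contrapositively, a connected non-path makes $\{v,w\}$ fail to be a zero forcing set, which by the reduction above supplies the required fort and hence the barbell partition of $G'$. The conceptual point worth flagging is that the barbell framework by itself cannot close the sufficiency direction---absence of a barbell partition is necessary but not sufficient for membership in $G^{SSP}$---so the spectral input $q(G')=|G'|-1$ is indispensable there.
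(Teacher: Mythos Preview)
Your proposal is correct and follows essentially the same route as the paper. For necessity, both arguments reduce via \Cref{fortbarbthm} and \Cref{fort} to a zero forcing statement: the paper shows $\{w\}$ is not a zero forcing set of $G-v$ (invoking the standard fact that a single vertex zero forces a graph only if it is an endpoint of a path) and then lifts the resulting fort to $G$, whereas you argue directly that $\{v,w\}$ is not a zero forcing set of $G$ by analyzing the forcing chain---these are equivalent since the pendant $v$ never forces. Your three-way case split is more careful than the paper, which invokes \Cref{fortbarbthm} without checking its hypothesis that $G$ has no barbell partition; fortunately only the forward implication is needed, and that direction does not actually use the hypothesis. For sufficiency the paper simply cites \Cref{ssp-pathprop}, which as stated covers only join duplication; your separate treatment of plain duplication via $q(T)=|T|-1$ (diameter lower bound plus path cover number $2$ giving a repeated eigenvalue) fills that gap using the same $q=|V|-1$ criterion from \cite{SSPforGs} that underlies \Cref{ssp-pathprop}.
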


\begin{proof}
First, by Proposition \ref{ssp-pathprop} it follows that if $G$ is a path, then $G' \in  G^{SSP}$.

Now suppose, $G$ is not a path.  Since $v$ is a pendant vertex, we can let $N_G(v)=\{u\}$.  Since $G$ is not a path and $v$ is a pendant vertex of $G$, it follows that either $G-v$ is a path and $u$ is not an endpoint of $G-v$ or $G-v$ is not a path.  In either case, $\{u\}$ is not a zero forcing set of $G-v$.  Thus by Theorem \ref{fort} $V(G-v)\setminus \{u\}$ contains a fort of $G-v$, call it $F$.  Since $N_G[v]=\{u,v\}$, it follows that $F$ is a fort of $G$ contained in $V(G) \setminus N_G[v]$.  Thus, by Theorem \ref{fortbarbthm}, $G'$ admits a barbell partition.  Finally, it follows that $G' \not \in  G^{SSP}$.
\end{proof}

\section{Barbell Partitions, Vertex Sums, and Joins}

In this section we consider barbell partitions associated with some standard graph operations (namely, vertex sums and joins), and we begin with the following result concerning a basic necessary condition for the existence of a barbell partition in a graph.

%\textcolor{blue}{I think in the following lemma $G$ should be connected as $\overline{K_2}$ has a barbell partition. Then the statement is kind of obvious: 1) if $R$ is empty the graph is disconnected, 2) if $R$ is not empty then $W_i$ can't be just one vertex as otherwise there would be only one neighbor from $R$ to it.} 

\begin{lem}\label{noiso}
Let $G$ be a graph with no isolated vertices.  If $\{R,W_1,W_2\}$ is a barbell partition of $G$, then $\abs{W_1},\abs{W_2} \geq 2$.
\end{lem}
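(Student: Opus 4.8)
The plan is to argue by contradiction, using the no-isolated-vertices hypothesis to force a neighbor of any singleton part into $R$, where condition 3 of the barbell partition is then violated. Because $W_1$ and $W_2$ play symmetric roles, it suffices to rule out $\abs{W_1}=1$; since condition 1 of the definition guarantees $W_1 \neq \emptyset$, establishing $\abs{W_1}\neq 1$ immediately yields $\abs{W_1}\geq 2$, and the same argument applied to $W_2$ completes the proof.

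So suppose toward a contradiction that $W_1=\{w\}$ is a singleton. Since $G$ has no isolated vertices, $w$ has at least one neighbor $x$, and $x\neq w$ because $G$ is simple. First I would locate $x$ within the partition $\{R,W_1,W_2\}$. It cannot lie in $W_1=\{w\}$ since $x\neq w$, and it cannot lie in $W_2$: an edge $wx$ with $w\in W_1$ and $x\in W_2$ would violate condition 2, which forbids edges between $W_1$ and $W_2$. Hence $x\in R$.

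Now I apply condition 3 to the vertex $x\in R$. Since $w\in N_G(x)$ and $W_1=\{w\}$, we have $N_G(x)\cap W_1=\{w\}$, so $\abs{N_G(x)\cap W_1}=1$. This directly contradicts condition 3, which requires $\abs{N_G(v)\cap W_i}\neq 1$ for every $v\in R$ and each $i\in\{1,2\}$. The contradiction shows $\abs{W_1}\neq 1$, hence $\abs{W_1}\geq 2$; interchanging the roles of $W_1$ and $W_2$ gives $\abs{W_2}\geq 2$. The argument is short, and the only step requiring any care—the sole obstacle, such as it is—is verifying that the neighbor $x$ must belong to $R$ rather than to $W_1$ or $W_2$; everything else is a direct unwinding of the definition. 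Note that the no-isolated-vertices hypothesis is precisely what guarantees the existence of such an $x$, and without it the disjoint union of $K_1$ with any graph would furnish a barbell partition having a singleton part, so the hypothesis cannot be dropped.
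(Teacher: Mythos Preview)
Your proof is correct and uses essentially the same idea as the paper's: a neighbor of a vertex in $W_1$ cannot lie in $W_2$, and if it lies in $R$ then condition~3 forces $\abs{W_1}\geq 2$. The paper organizes this as a direct argument with a case split on whether the component containing $w_1$ meets $R$, whereas your contradiction argument (assuming $\abs{W_1}=1$) collapses the two cases into one and is slightly cleaner.
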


\begin{proof}
Let $w_1 \in W_1$ and $w_2 \in W_2$.  First suppose that the component of $G$ containing $w_1$ contains no members of $R$.  Since this means every vertex in this component is either in $W_1$ or $W_2$, it follows that every vertex in this component must be in $W_1$, otherwise there exists vertices $u,v$ with $u \in W_1$, $v \in W_2$, and $uv \in E(G)$ which would imply that $\{R,W_1,W_2\}$ is not a barbell partition of $G$.  Since $G$ has no isolated vertices, there must be another vertex $w_1' \in W_1$ in this component.  So, $\abs{W_1} \geq 2$.

Now suppose the component of $G$ containing $w_1$ contains at least one member of $R$.  Since this component is connected and contains members of both $R$ and $W_1$, there must exist a pair of vertices $r \in R$ and $w'_1 \in W_1$ such that $rw'_1 \in E(G)$.  Since $\left\vert N_G(r) \cap W_1 \right\vert \neq 0$ and $\{R,W_1,W_2\}$ is a barbell partition of $H$, it follows that $\left\vert N_G(r) \cap W_1 \right\vert \geq 2$, and thus $|W_1| \geq 2$.

An identical argument shows that $|W_2| \geq 2$.
\end{proof}

It is of course not necessary for a graph $G$ to not have any isolated vertices for it to admit a barbell partition $\{R,W_1,W_2\}$ for which $\abs{W_1},\abs{W_2} \geq 2$, as witnessed by the graph $G$ with vertex set $V(G)=\{v_i\}_{i=1}^4$ and edge set $E(G)=\emptyset$.  However, this can be viewed as establishing that this property occurs anytime a graph $G$ possesses a barbell partition such that neither $W_1$ nor $W_2$ is a single isolated vertex.  It also provides the following biconditional result concerning the interaction between the join of two graphs and barbell partitions.

\begin{thm}
Let $G$ and $H$ each be graphs with no isolated vertices and $K=G \vee H$.  Then $K$ admits a barbell partition, if and only if either $G$ or $H$ admits a barbell partition $\{R,W_1,W_2\}$ for which $\abs{W_1},\abs{W_2} \geq 2$.
\end{thm}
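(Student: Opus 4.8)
The plan is to prove the two directions of the biconditional separately, in each case exploiting the defining feature of the join, namely that in $K = G \vee H$ every vertex of $G$ is adjacent to every vertex of $H$.

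For the ``if'' direction, suppose without loss of generality that $G$ admits a barbell partition $\{R, W_1, W_2\}$ with $\abs{W_1}, \abs{W_2} \geq 2$. I would propose $\{R \cup V(H), W_1, W_2\}$ as a partition of $V(K)$ and verify the three defining conditions. The first two are inherited from $G$ since $W_1, W_2 \subseteq V(G)$ and the only new edges of $K$ incident to $W_1 \cup W_2$ run to $V(H)$, so no new edge appears between $W_1$ and $W_2$. For the third condition, a vertex $v \in R$ satisfies $N_K(v) \cap W_i = N_G(v) \cap W_i$, so its counts are unchanged, while each new vertex $v \in V(H)$ is adjacent in $K$ to all of $V(G) \supseteq W_i$, whence $\abs{N_K(v) \cap W_i} = \abs{W_i} \geq 2 \neq 1$. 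This is exactly the step that consumes the hypothesis $\abs{W_i} \geq 2$.

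For the ``only if'' direction, suppose $K$ admits a barbell partition $\{R, W_1, W_2\}$, and set $A = W_1 \cap V(G)$, $B = W_1 \cap V(H)$, $C = W_2 \cap V(G)$, $D = W_2 \cap V(H)$. Since all edges between $V(G)$ and $V(H)$ are present in $K$, the absence of edges between $W_1$ and $W_2$ forces $A = \emptyset$ or $D = \emptyset$ (else some pair in $A \times D$ is a join edge between $W_1$ and $W_2$), and likewise $B = \emptyset$ or $C = \emptyset$. Enumerating the four resulting combinations and discarding the two that make $W_1$ or $W_2$ empty, I would conclude that either both $W_1, W_2 \subseteq V(G)$ or both $W_1, W_2 \subseteq V(H)$. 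This structural dichotomy is the heart of the argument and the step I expect to require the most care to state cleanly.

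Assuming without loss of generality that $W_1, W_2 \subseteq V(G)$, it follows that $V(H) \subseteq R$, and I would claim that $\{R \cap V(G), W_1, W_2\}$ is a barbell partition of $G$: conditions 1 and 2 are immediate, and for each $v \in R \cap V(G)$ one has $N_G(v) \cap W_i = N_K(v) \cap W_i$ because $W_i \subseteq V(G)$, which preserves the count condition. Finally, since $G$ has no isolated vertices, applying Lemma \ref{noiso} to this barbell partition yields $\abs{W_1}, \abs{W_2} \geq 2$, completing the proof; this is precisely where the no-isolated-vertices hypothesis is used.
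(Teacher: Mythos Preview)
Your proposal is correct and follows essentially the same approach as the paper: for the ``if'' direction you absorb the other factor into $R$ and use $\abs{W_i}\geq 2$ to handle the new vertices, and for the ``only if'' direction you use the complete bipartite structure of the join to force $W_1\cup W_2$ into one factor, restrict the partition there, and invoke Lemma~\ref{noiso}. The only differences are cosmetic (you name the intersections $A,B,C,D$ and enumerate, while the paper phrases the dichotomy via implications, and the roles of $G$ and $H$ are swapped).
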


%\begin{figure}[h]
  %\centering
  %\includegraphics[width=0.8\linewidth]{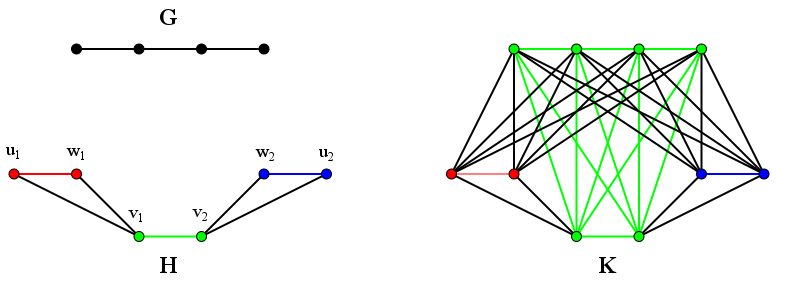}
  %\label{join-0}
%\end{figure}

\begin{proof}
First suppose $\{R,W_1,W_2\}$ is a barbell partition of $H$ for which $\abs{W_1},\abs{W_2} \geq 2$, and let $R'=R \cup V(G)$.  We will show that $\{R',W_1,W_2\}$ is a barbell partition of $K$.

\vspace{0.1in}
\begin{addmargin}[0.87cm]{0cm}{\bf Claim 1:}
There do not exist vertices $w_1 \in W_1$ and $w_2 \in W_2$ such that $w_1w_2 \in E(K)$.
\vspace{0.1in}

\noindent {\em Proof of Claim 1.}
Note, for vertices $u,v \in V(H)$, $uv \in E(H) \Longleftrightarrow uv \in E(K)$.  Since $\{R,W_1,W_2\}$ is a barbell partition of $H$, it follows that there do not exist vertices $w_1 \in W_1$ and $w_2 \in W_2$ such that $w_1w_2 \in E(K)$.
\end{addmargin}

\vspace{0.1in}

\begin{addmargin}[0.87cm]{0cm}{\bf Claim 2:}
For each $r \in R'$, $\left\vert N_K(r) \cap W_i \right\vert \neq 1$ for $i \in \{1,2\}$.
\vspace{0.1in}

\noindent {\em Proof of Claim 2.} 
Again, for vertices $u,v \in V(H)$, $uv \in E(H) \Longleftrightarrow uv \in E(K)$.  Since $\{R,W_1,W_2\}$ is a barbell partition of $H$, it follows that for every vertex $r \in R$ and each $i \in \{1,2\}$, $\left\vert N_K(r) \cap W_i\right\vert \neq 1$.  Next, since $K=G \vee H$ and $W_1 \cup W_2 \subset V(H)$, every vertex $v \in V(G)$ is adjacent to every vertex in $W_1 \cup W_2$.  Since $\abs{W_1},\abs{W_2} \geq 2$ for each $v \in V(G)$ and each $i \in \{1,2\}$, $\left\vert N_K(v) \cap W_i \right\vert \geq 2$.  Since $R'=R \cup V(G)$, it follows that for every vertex $r \in R'$ and for each $i \in \{1,2\}$, $\left\vert N_K(r) \cap W_i \right\vert \neq 1$.
\end{addmargin}
\vspace{0.1in}

Thus $\{R', W_1, W_2\}$ forms a barbell partition of $K=G \vee H$.

\vspace{0.1in}

Next suppose $\{R,W_1,W_2\}$ is a barbell partition of $K=G \vee H$.  Since for every pair of vertices $g \in V(G)$ and $h \in V(H)$, we have $gh \in E(K)$, it follows that for each $i \in \{1,2\}$, 
\[W_i \cap V(G) \neq \emptyset \Longrightarrow W_{3-i} \subset V(G) \text{ and } W_i \cap V(H) \neq \emptyset \Longrightarrow W_{3-i} \subset V(H).\]

%\textcolor{blue}{\[W_i \cap V(G) \neq \emptyset \Longrightarrow W_{3-i} \subset V(G) \text{ and } W_i \cap V(H) = \emptyset \Longrightarrow W_{3-i} \subset V(G).\]}

Since $W_1,W_2 \neq \emptyset$, it must be that either $W_1 \cup W_2 \subseteq V(G)$ or $W_1 \cup W_2 \subseteq V(H)$.  Without loss of generality, suppose that $W_1 \cup W_2 \subseteq V(H)$.  It thus follows that $V(G) \subseteq R$.  Finally, since for vertices $u,v \in V(H)$, $uv \in E(H) \Longleftrightarrow uv \in E(K)$, it follows that there do not exist vertices $w_1 \in W_1$ and $w_2 \in W_2$ such that $w_1w_2 \in E(H)$ and for each $r \in R \cap V(H)$ and each $i \in \{1,2\}$, it also follows that $\left\vert N_H(r) \cap W_i\right\vert \neq 1$.  Thus $\{R \cap V(H), W_1,W_2\}$ is a barbell partition of $H$.  Finally, since $H$ has no isolated vertices, by Lemma \ref{noiso} $\abs{W_1},\abs{W_2} \geq 2$.
\end{proof}

%\textcolor{blue}{here $G+v$ is used for dominating vertex $v$ but I think before (like page 8) it is used differently.}

\begin{cor}\label{domin}
Let $G$ be a graph with no isolated vertices, and let $H$ be the graph obtained from $G$ by adding a dominating vertex $v$.  If $\{R,W_1,W_2\}$ is a barbell partition of $G$, then $\{R \cup \{v\},W_1,W_2\}$ is a barbell partition of $H$.
\end{cor}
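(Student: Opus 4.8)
The plan is to verify directly the three defining conditions of a barbell partition for the candidate partition $\{R \cup \{v\}, W_1, W_2\}$ of $H$, observing that adding a dominating vertex $v$ to $G$ produces exactly the join $H = G \vee K_1$, so the argument will mirror the forward direction of the preceding theorem on joins with the second factor taken to be the single vertex $v$.

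First I would record the structural fact that the only edges present in $H$ but not in $G$ are those joining $v$ to every vertex of $G$; equivalently, for any $u \in V(G)$ and any $S \subseteq V(G)$ with $v \notin S$, we have $N_H(u) \cap S = N_G(u) \cap S$. The nonemptiness condition is then immediate: $W_1$ and $W_2$ are unchanged and remain nonempty because $\{R, W_1, W_2\}$ is a barbell partition of $G$ (and the part $R \cup \{v\}$ is permitted to be nonempty). For the second condition, the only new edges are incident to $v$, and since $v \in R \cup \{v\}$ lies outside $W_1 \cup W_2$, no new edge runs between $W_1$ and $W_2$; the absence of $W_1$–$W_2$ edges therefore persists from $G$ to $H$.

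The third condition splits into two cases. For a vertex $u \in R$, since $v \notin W_i$ the identity above gives $N_H(u) \cap W_i = N_G(u) \cap W_i$, so $\abs{N_H(u) \cap W_i} \neq 1$ is inherited directly from the barbell partition of $G$. The remaining and crucial case is the new vertex $u = v$ itself: because $v$ is dominating, $N_H(v) \cap W_i = W_i$, so the condition $\abs{N_H(v) \cap W_i} \neq 1$ is equivalent to $\abs{W_i} \neq 1$.

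I expect this last point to be the main obstacle, and it is precisely where the hypothesis that $G$ has no isolated vertices is used. By Lemma \ref{noiso}, any barbell partition $\{R, W_1, W_2\}$ of a graph with no isolated vertices satisfies $\abs{W_1}, \abs{W_2} \geq 2$; hence $\abs{N_H(v) \cap W_i} = \abs{W_i} \geq 2 \neq 1$ for each $i \in \{1,2\}$, completing the verification. Without the no-isolated-vertices hypothesis one could have $\abs{W_i} = 1$, in which case $v$ would fail the third condition and the conclusion would break down, so invoking Lemma \ref{noiso} is the essential step that makes the corollary go through.
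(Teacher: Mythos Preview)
Your proof is correct and follows essentially the same approach as the paper: you verify the three barbell-partition conditions directly, inheriting the first two and the $r\in R$ case of the third from $G$, and then invoke Lemma~\ref{noiso} to handle the dominating vertex $v$ via $\abs{W_i}\geq 2$. The framing through $H=G\vee K_1$ and the explicit neighborhood identity are nice clarifying touches, but the substance matches the paper's argument step for step.
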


\begin{proof}
Since $\{R,W_1,W_2\}$ is a barbell partition of $G$ and given $u,v \in V(G)$, $uv \in E(H)$ if and only if $uv \in E(G)$, it follows that there do not exist vertices $w_1 \in W_1$, $w_2 \in W_2$ such that $w_1w_2 \in E(H)$.  Similarly, it follows that for each $r \in R$, $\abs{N_H(r) \cap W_i} \neq 1$ for each $i \in \{1,2\}$. Since $G$ has no isolated vertices, by Lemma \ref{noiso}, it follows that $\abs{W_1},\abs{W_2} \geq 2$.  Furthermore, since $v$ is adjacent to every vertex in $W_1 \cup W_2$ and $\abs{W_1},\abs{W_2} \geq 2$, it follows that $\abs{N_H(v) \cap W_i} \neq 1$ for each $i \in \{1,2\}$.  Thus $\{R \cup \{v\},W_1,W_2\}$ is a barbell partition of $H$.
\end{proof}

Let $G$ and $H$ be two graphs. The graph obtained from $G$ and $H$ by identifying a vertex $v$ in both $G$ and $H$ is called the vertex sum of $G$ and $H$ at $v$ and is denoted by $G \oplus_v H$. Observe that $v$ is necessarily a cut vertex of $G \oplus_v H$. Along these lines, it follows as an immediate corollary of Theorem 4.3 and Corollary 2.4 in \cite{SSPforGs} that, for two path graphs $P_n$ and $P_m$, $P_n \oplus_v P_m$ admits a barbell partition if and only if $\deg_{P_n}(v)=\deg_{P_m}(v)=2$.  The next result is concerned with the vertex sum of two graphs excluding paths.

%\begin{proof}
%If $\deg_G(v)=\deg_H(v)=2$, then $G \oplus_v H$ is a spider with 4 legs.  Letting $R=\{v\}$ and each of $W_1$ and $W_2$ be two of the components of $G-v$, clearly forms a barbell partition.

%On the other hand if either $\deg_G(v) \neq 2$ or $\deg_H(v) \neq 2$, then $G \oplus_v H$ will either be a path or a spider with 3 legs.  Since each path is a member of $G^{SSP}$, a path does not admit a barbell partition.  Furthermore, any fort of a spider with 3 legs must contain the final vertex of at least 2 of the spider's legs.  Thus, a spider with 3 legs cannot have two disjoint forts, and thus cannot have a barbell partition.
%\end{proof}

%\textcolor{blue}{Should we say something like ``for any vertex $v$'' in the next two theorems? I tried to add something about the vertex but since we can't write $v\in V(G)\cap V(H)$ I am not sure what is best.}

\begin{thm}\label{oplus-thm1}
Let $G$ and $H$ be graphs which are not paths.  Then $G \oplus_v H$, where $v$ is any identified vertex of both $G$ and $H$, admits a barbell partition.
\end{thm}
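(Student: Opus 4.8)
The plan is to produce a pair of separated forts of $K = G \oplus_v H$, one lying entirely on the $G$-side and the other entirely on the $H$-side, and then to invoke Observation \ref{fortbarb}. Since $v$ is a cut vertex of $K$, the sets $V(G) \setminus \{v\}$ and $V(H) \setminus \{v\}$ have no edges between them. So if I can exhibit a nonempty fort $F_1 \subseteq V(G) \setminus \{v\}$ and a nonempty fort $F_2 \subseteq V(H) \setminus \{v\}$, each a fort of $K$, then $\{F_1,F_2\}$ is automatically a pair of separated forts, and the barbell partition $\{V(K) \setminus (F_1 \cup F_2), F_1, F_2\}$ follows.

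First I would reduce the problem to finding these forts inside $G$ and $H$ individually. I claim that any fort $F$ of $G$ with $F \subseteq V(G) \setminus \{v\}$ is also a fort of $K$: a vertex of $V(H) \setminus \{v\}$ has no neighbor in $F$, while for $v$ and for every vertex of $V(G) \setminus (\{v\} \cup F)$ the neighborhood inside $F$ is identical to the one computed in $G$, so the defining condition $\abs{N_K(w) \cap F} \neq 1$ is inherited from $F$ being a fort of $G$. The symmetric statement holds on the $H$-side. Thus it suffices to show that $V(G) \setminus \{v\}$ contains a fort of $G$, and likewise for $H$.

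The heart of the argument, and the only place where the hypothesis that $G$ is not a path is used, is the following: for a graph $G$ that is not a path, $\{v\}$ is not a zero forcing set of $G$, for \emph{every} $v \in V(G)$. For connected $G$ this is the standard characterization that the zero forcing number equals $1$ precisely for paths (see \cite{HLA2}); if $G$ is disconnected, a single colored vertex cannot force across components, so again $\{v\}$ is not a zero forcing set. Applying Theorem \ref{fort} with $S = V(G) \setminus \{v\}$, so that $V(G) - S = \{v\}$ is not a zero forcing set, we conclude that $S = V(G) \setminus \{v\}$ contains a fort of $G$; call it $F_1$. The identical reasoning applied to $H$ produces a fort $F_2 \subseteq V(H) \setminus \{v\}$.

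Combining the pieces, $F_1$ and $F_2$ are forts of $K$ by the reduction above, they are nonempty and disjoint, and no vertex of $F_1$ is adjacent to a vertex of $F_2$ because they sit on the two sides separated by the cut vertex $v$, which lies in neither. Hence $\{F_1,F_2\}$ is a pair of separated forts of $K$, and by Observation \ref{fortbarb} the graph $K = G \oplus_v H$ admits a barbell partition. I expect the main obstacle to be the zero-forcing characterization of paths (equivalently, the claim that a non-path $G$ always has a fort avoiding any prescribed vertex); everything else is bookkeeping about how forts behave across a cut vertex.
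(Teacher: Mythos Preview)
Your proof is correct and follows essentially the same approach as the paper's: both use that a non-path $G$ has $\{v\}$ not a zero forcing set, apply Theorem~\ref{fort} to extract a fort of $G$ inside $V(G)\setminus\{v\}$ (and likewise for $H$), and observe that these two forts are separated in $G\oplus_v H$ since $v$ is the only vertex with neighbors on both sides. Your write-up is a bit more explicit about why a fort of $G$ avoiding $v$ remains a fort of $K$ and about the disconnected case, but the argument is the same.
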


%\textcolor{blue}{Here we need to mention that $P_1$ is also a path, so I added that to the definitions, but we don't need this for $C_n$ and $k_n$, please check.}

\begin{proof}
Since $G$ is not a path, it follows that $\{v\}$ cannot be a zero forcing set of $G$.  Thus, by Theorem \ref{fort}, $V(G) \setminus \{v\}$ must contain a fort $F_G$ of $G$.  Likewise, since $H$ is not a path, $V(H) \setminus \{v\}$ must contain a fort $F_H$ of $H$.  Since $v$ is the only vertex in $H$ with neighbors in $G$, and vice versa, it follows that $\{F_G,F_H\}$ is a pair of separated forts of $G \oplus_v H$, completing the proof.\end{proof}

\begin{thm}\label{oplus-thm2}
Let $G$ be a graph and $H$ be a graph admitting a barbell partition.  Then $K=G \oplus_v H$, where $v$ is any identified vertex of both $G$ and $H$,  admits a barbell partition.
\end{thm}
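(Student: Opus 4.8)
The plan is to take a barbell partition $\{R,W_1,W_2\}$ of $H$ and extend it to $K=G\oplus_v H$ by absorbing the vertices of $G$ other than $v$ into whichever part already contains $v$. Write $A=V(G)\setminus\{v\}$, and record the defining feature of the vertex sum: every vertex of $A$ has all of its $K$-neighbors inside $V(G)=A\cup\{v\}$, while every vertex of $V(H)\setminus\{v\}$ keeps exactly its $H$-neighborhood. In particular, for any $u\in A$ and any $S\subseteq V(H)\setminus\{v\}$ we have $N_K(u)\cap S=\emptyset$, and for any $u\in V(H)$ we have $N_K(u)\cap S=N_H(u)\cap S$.

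I would then split into two cases according to the location of the shared vertex $v$ in the partition of $H$. If $v\in R$, set $R'=R\cup A$ and keep $W_1,W_2$ unchanged. Since $W_1,W_2\subseteq V(H)\setminus\{v\}$, the observations above give that no edge joins $W_1$ to $W_2$ in $K$ (such an edge would be an edge of $H$, excluded by the barbell partition of $H$), and that for each $r\in R'$ we have $\abs{N_K(r)\cap W_i}=\abs{N_H(r)\cap W_i}\neq 1$ when $r\in R$, while $\abs{N_K(r)\cap W_i}=0$ when $r\in A$; the subcase $r=v$ is covered by the first computation because $N_K(v)\cap W_i=N_H(v)\cap W_i$. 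Hence $\{R',W_1,W_2\}$ is a barbell partition of $K$.

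If instead $v\in W_j$ for some $j\in\{1,2\}$, set $W_j'=W_j\cup A$ and keep $R$ and $W_{3-j}$ unchanged. The only condition to recheck is that no edge joins $W_j'$ to $W_{3-j}$: edges from $W_j$ to $W_{3-j}$ are excluded by the barbell partition of $H$, and no vertex of $A$ has a neighbor in $W_{3-j}\subseteq V(H)\setminus\{v\}$. The degree condition on $R$ is untouched, since $N_H(r)\cap A=\emptyset$ for $r\in R\subseteq V(H)$ gives $\abs{N_K(r)\cap W_j'}=\abs{N_H(r)\cap W_j}\neq 1$. Thus the resulting partition is again a barbell partition of $K$.

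The computations are routine; the one point requiring care — and the only place a naive argument breaks — is the case $v\in W_j$. Leaving the parts unchanged there would violate condition (3): a vertex of $A$ adjacent to $v$ would have $v$ as its unique neighbor in $W_j$, so that $\abs{N_K(u)\cap W_j}=1$. Absorbing $A$ into the part containing $v$ is exactly what repairs this, and since the vertices of $A$ carry no edges to the opposite part, the absorption affects none of the other conditions. Equivalently, one may phrase the whole argument through \Cref{fortbarb}: $\{W_1,W_2\}$ is a pair of separated forts of $H$, and the same absorption turns it into a pair of separated forts of $K$.
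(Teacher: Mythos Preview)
Your proof is correct and follows essentially the same approach as the paper: split into the cases $v\in R$ and $v\in W_j$, and in each case absorb $V(G)\setminus\{v\}$ into the part already containing $v$. The paper phrases the absorbed set as $V(G)$ rather than $A=V(G)\setminus\{v\}$, but since $v$ already lies in that part the resulting partitions are identical; your added remark explaining why absorption into $R$ would fail when $v\in W_j$, and the reformulation via \Cref{fortbarb}, are nice touches not present in the paper's version.
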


%\begin{figure}[h]
  %\centering
  %\includegraphics[width=0.6\linewidth]{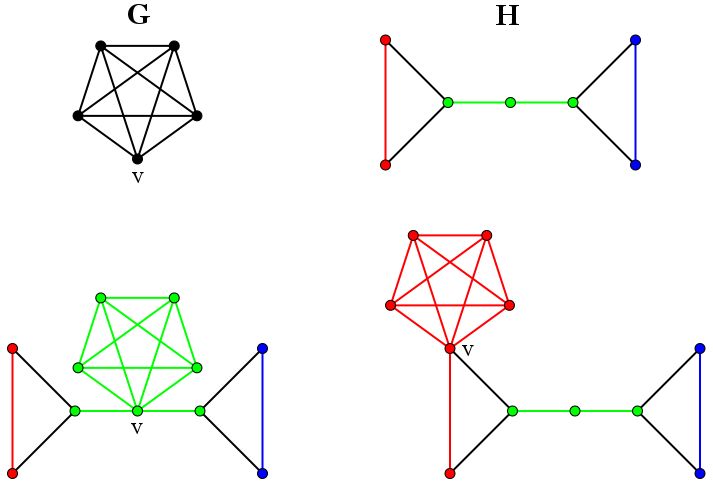}
%\end{figure}

\begin{proof}
Let $\{R, W_1, W_2\}$ be a barbell partition of $H$.

\noindent \textbf{Case 1:} $v \in R$

Let $R'=R \cup V(G)$.  We will show that $\{R',W_1,W_2\}$ is a barbell partition of $K=G \oplus_v H$. 

\vspace{0.1in}

\begin{addmargin}[0.87cm]{0cm}{\bf Claim 1.1:}
$W_1,W_2 \not = \emptyset$ and there do not exist vertices $w_1 \in W_1$ and $w_2 \in W_2$ such that $\{w_1,w_2\} \in E(K)$.
\vspace{0.1in}

\noindent {\em Proof of Claim 1.1.}
Since $\{R,W_1,W_2\}$ is a barbell partition of $H$, it follows that $W_1,W_2 \neq \emptyset$ and there are no edges between vertices in $W_1$ and $W_2$ in $H$.  Furthermore, since $V(G) \subset R'$, there are no edges between $W_1$ and $W_2$ in $K$.
\end{addmargin}

\vspace{0.1in}

\begin{addmargin}[0.87cm]{0cm}{\bf Claim 1.2:}
For each $r \in R'$ and $i \in \{1,2\}$, we have that $\left\vert N_{K}(r) \cap W_i \right\vert \neq 1$. 
\vspace{0.1in}

\noindent {\em Proof of Claim 1.2.} If $r \in R' \cap V(H)$, since $\{R,W_1,W_2\}$ is a barbell partition of $H$ and $V(G) \subseteq R'$, for each $i \in \{1,2\}$, we have $\left\vert N_K(r) \cap W_i \right\vert = \left\vert N_H(r) \cap W_i \right\vert \not = 1$. If $r \notin R' \cap V(H)$, since no vertex in $V(G) \setminus \{v\}$ is adjacent to a vertex in $V(H) \setminus \{v\}$, it follows that $\left\vert N_K(r) \cap W_i \right\vert =0$.
\end{addmargin}
\vspace{0.1in}

\noindent Thus $\{R',W_1,W_2\}$ is a barbell partition of $K=G \oplus_v H$.

%\noindent {\em Proof of Claim 1.2.}
%Since $\{R,W_1,W_2\}$ is a barbell partition of $H$ and $V(G) \subseteq R'$, for each $u \in R \cap V(H)$ and each $i \in \{1,2\}$, we have $\left\vert N_K(u) \cap W_i \right\vert = \left\vert N_H(u) \cap W_i \right\vert \not = 1$.  Since no vertex in $V(G) \setminus \{v\}$ is adjacent a vertex in $V(H) \setminus \{v\}$, it follows that for each $u \in V(G)\setminus \{v\}$ and each $i \in \{1,2\}$, we have that $\left\vert N_K(u) \cap W_i \right\vert =0$.
%\end{addmargin}
%\vspace{0.1in}
%
%\textcolor{blue}{How about this: \noindent {\em Proof of Claim 1.2.} If $r \in R' \cap V(H)$, since $\{R,W_1,W_2\}$ is a barbell partition of $H$ and $V(G) \subseteq R'$, for each $i \in \{1,2\}$, we have $\left\vert N_K(r) \cap W_i \right\vert = \left\vert N_H(r) \cap W_i \right\vert \not = 1$. If $r \notin R' \cap V(H)$, since no vertex in $V(G) \setminus \{v\}$ is adjacent a vertex in $V(H) \setminus \{v\}$, it follows that $\left\vert N_K(r) \cap W_i \right\vert =0$.}
%
%\noindent Thus $\{R',W_1,W_2\}$ is a barbell partition of $K=G \oplus_v H$.

\vspace{0.1in}

\noindent \textbf{Case 2 :}
Assume $v \in W_i$ for some $i \in \{1,2\}$

Without loss of generality suppose $v \in W_1$, and let $W_1'=W_1 \cup V(G)$.  We will show that $\{R,W'_1,W_2\}$ is a barbell partition of $K=G \oplus_v H$. 

\begin{addmargin}[0.87cm]{0cm}{\bf Claim 2.1:}
$W'_1,W_2 \not = \emptyset$ and there do not exist vertices $w_1 \in W'_1$ and $w_2 \in W_2$ such that $w_1w_2 \in E(K)$.
\vspace{0.1in}

\noindent {\em Proof of Claim 2.1.}
Since $W_1 \subset W_1'$, it follows that $W_1'$ and $W_2$ are nonempty. Since there are no edges between $W_1$ and $W_2$ and no vertex in $V(G) \setminus \{v\}$ is adjacent to a vertex in $V(H) \setminus \{v\}$, it follows that there are no edges between $W_1'$ and $W_2$.
\end{addmargin}

\vspace{0.1in}

\begin{addmargin}[0.87cm]{0cm}{\bf Claim 2.2:}
For each $r \in R$, we have that $\left\vert N_{K}(r) \cap W'_1 \right\vert \neq 1$ and $\left\vert N_{K}(r) \cap W_2 \right\vert \neq 1$.
\vspace{0.1in}

\noindent {\em Proof of Claim 2.2.}
Since $V(G) \subset W_1'$ and no vertex in $V(H) \setminus \{v\}$ is adjacent to a vertex in $V(G) \setminus \{v\}$, it follows that for each $r \in R$, we have that $\left\vert N_{K}(r) \cap W'_1 \right\vert \neq 1$ and $\left\vert N_{K}(r) \cap W_2 \right\vert \neq 1$.  
\end{addmargin}
\vspace{0.1in}
\noindent Thus $\{R, W_1',W_2\}$ is a barbell partition of $K=G \oplus_v H$.
\end{proof}

From the above results we have the following straightforward consequence.

\begin{obs}
    We can conclude from Theorem \ref{oplus-thm1} and \ref{oplus-thm2} that if $G \oplus_v H\in G^{SSP}$ then either $G \oplus_v H=P_n \oplus_v P_m$ (with $\deg (v) = 1$ for at least one of the graphs), or
    one of the graphs is a path and the other does not admit a barbell partition.
\end{obs}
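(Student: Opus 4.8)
The plan is to argue by contraposition, exploiting the fact recorded earlier (the lemma from \cite{SSPforGs}) that any graph admitting a barbell partition fails to lie in $G^{SSP}$. Thus, assuming $G \oplus_v H \in G^{SSP}$, the vertex sum $G \oplus_v H$ admits no barbell partition, and I would extract structural consequences for $G$ and $H$ from Theorems \ref{oplus-thm1} and \ref{oplus-thm2} read in contrapositive form.

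First I would apply the contrapositive of Theorem \ref{oplus-thm1}: since $G \oplus_v H$ has no barbell partition, it cannot be that both $G$ and $H$ are non-paths, so at least one of $G$, $H$ is a path. Next I would apply the contrapositive of Theorem \ref{oplus-thm2}, noting that the vertex sum is symmetric in its two arguments, so the theorem applies equally with the roles of $G$ and $H$ interchanged; this forces that neither $G$ nor $H$ admits a barbell partition.

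With these two facts in hand, the proof splits into two exhaustive cases according to how many of $G$, $H$ are paths. If both are paths, then $G \oplus_v H = P_n \oplus_v P_m$, and I would invoke the characterization recalled just before Theorem \ref{oplus-thm1} --- that $P_n \oplus_v P_m$ admits a barbell partition precisely when $\deg_{P_n}(v) = \deg_{P_m}(v) = 2$ --- to conclude that, since no barbell partition is present, $\deg(v) \neq 2$ in at least one of the two paths; as a vertex of a path has degree $1$ or $2$, this means $\deg(v) = 1$ for at least one of them, which is the first alternative. If exactly one of $G$, $H$ is a path, say $G$, then the other is a non-path which, by the second fact above, admits no barbell partition, yielding the second alternative.

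This is essentially a logical rearrangement of the two theorems rather than a computation, so there is no serious analytic obstacle. The only points requiring care are invoking the ``barbell partition $\Rightarrow \notin G^{SSP}$'' implication to translate the $G^{SSP}$ hypothesis into the absence of a barbell partition, using the symmetry of the vertex sum so that Theorem \ref{oplus-thm2} constrains both $G$ and $H$, and correctly reading off ``$\deg(v) = 1$ for at least one graph'' from the failure of the degree-two condition in the all-paths case, where whether $v$ is an endpoint or an interior vertex of a path is exactly what distinguishes degrees $1$ and $2$.
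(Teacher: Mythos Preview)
Your proposal is correct and is precisely the logical unpacking the paper intends: the statement is recorded as an observation with no proof beyond the phrase ``We can conclude from Theorem~\ref{oplus-thm1} and~\ref{oplus-thm2}\ldots'', and your contrapositive reading of those two theorems together with the $P_n \oplus_v P_m$ characterization stated just before Theorem~\ref{oplus-thm1} is exactly the intended argument.
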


\section{Barbell Partitions and Graph Products}

We close the discussion on barbell partitions by considering such vertex partitions associated with some classical graph products. We begin by considering the corona product of two graphs.  

\begin{defn}\label{defcor}
Let $G$ and $H$ be graphs with $V(G)=\{g_i\}_{i=1}^k$ and $V(H)=\{h_j\}_{j=1}^m$.  The corona product of $G$ with $H$, denoted $G \circ H$ is the graph with vertex set $V(G \circ H)=\{g_i\}_{i=1}^k \cup \{h_{i,j}\}_{i=1,}^k{}_{j=1}^m$ and edge set $E(G \circ H)$ such that given $u,v \in V(G \circ H)$, we have $uv \in E(G \circ H)$ provided one of the following is true:
\begin{itemize}
    \item $u,v \in \{g_i\}_{i=1}^k$ and $uv \in E(G)$
    \item $u = g_i$ and $v=h_{i,j}$ for some $i \in \{1,2,...,k\}$ and some $j \in \{1,2,...,m\}$
    \item $u=h_{i,j_1}$ and $u=h_{i,j_2}$ with $h_{j_1}h_{j_2} \in E(H)$.
\end{itemize}
\end{defn}

Regarding the corona product we consider the special case of the complete graph $K_n$ with $K_1$, denoted by $K_n\circ K_1$, is called the \emph{corona} of $K_n$.  

\begin{prop}
Let 
\[B = \begin{bmatrix}
A & D_{\mu}\\
D_{\mu} & D_{\lambda}\\
\end{bmatrix} \in \mathcal{S}(K_n\circ K_1)\] 
where $A \in \mathcal{S}(K_n)$,  
$D_{\mu}=\text{diag}(\mu_1,\mu_2,\ldots,\mu_n)$ and $D_{\lambda}=\text{diag}(\lambda_1,\lambda_2,\ldots,\lambda_n)$ with 
$\mu_i \neq 0$ for all $i \in [n]$. Then $B$ has the SSP if $D_{\lambda} = \lambda I_n$, and $\mu_i \neq \pm \mu_j$ for all $i,j \in [n]$ with $i\neq j$.
\end{prop}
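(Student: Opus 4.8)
The plan is to verify the three defining conditions of the SSP directly, exploiting the $n\times n$ block structure of $B$ coming from the corona. First I would write a candidate symmetric matrix in conformal block form,
\[X = \begin{bmatrix} X_{11} & Y \\ Y^{T} & Z \end{bmatrix},\]
and use the two Hadamard-type conditions to pin down its shape. Since $A \in \mathcal{S}(K_n)$ has every off-diagonal entry nonzero, $B \circ X = O$ forces all off-diagonal entries of $X_{11}$ to vanish, and $I \circ X = O$ kills its diagonal, so $X_{11} = O$. The block $D_\mu$ is diagonal with nonzero entries, so $B\circ X=O$ forces the diagonal of $Y$ to vanish, while $I\circ X=O$ forces the diagonal of $Z$ to vanish. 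Because $D_\lambda=\lambda I$ is diagonal, $B\circ X=O$ imposes no further constraint on the off-diagonal entries of $Z$. Thus at this stage $X$ reduces to a matrix with $X_{11}=O$, both $Y$ and $Z$ having zero diagonal, and $Z=Z^{T}$, with the off-diagonal entries of $Y$ and $Z$ still free.

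Next I would impose $[B,X]=O$ and read off the block equations of $BX=XB$. The $(1,1)$ and $(2,2)$ blocks give
\[D_\mu Y^{T} = Y D_\mu \qquad\text{and}\qquad D_\mu Y = Y^{T} D_\mu,\]
respectively; here the hypothesis $D_\lambda=\lambda I$ is essential, since it is exactly what makes the $\lambda Z$ terms in the $(2,2)$ block cancel and leaves a clean relation in $Y$ alone. Reading these entrywise yields $\mu_i Y_{ji} = \mu_j Y_{ij}$ and $\mu_i Y_{ij} = \mu_j Y_{ji}$ for all $i,j$. Eliminating $Y_{ji}$ between the two relations gives $(\mu_i^{2}-\mu_j^{2})Y_{ij}=0$, and since $\mu_i \neq \pm\mu_j$ for $i \neq j$ (and the diagonal of $Y$ is already zero), every entry of $Y$ must vanish, so $Y=O$.

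Finally, the $(1,2)$ block equation reads $AY + D_\mu Z = Y D_\lambda$; substituting $Y=O$ leaves $D_\mu Z = O$, and because $\mu_i \neq 0$ for all $i$ makes $D_\mu$ invertible, I conclude $Z=O$. Hence $X=O$, which is precisely the assertion that $B$ has the SSP.

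I expect the main obstacle to be the bookkeeping in the second step: one must combine the $(1,1)$ and $(2,2)$ block relations correctly to isolate the factor $\mu_i^{2}-\mu_j^{2}$, and it is here that both spectral hypotheses---that $D_\lambda$ is scalar and that $\mu_i \neq \pm\mu_j$---are indispensable, since dropping either would allow a nonzero off-diagonal $Y$ (or a surviving coupling with $Z$). The remaining steps are routine block multiplications together with the invertibility of $D_\mu$.
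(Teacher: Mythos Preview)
Your proposal is correct and follows essentially the same route as the paper: both arguments reduce $X$ to the block form with vanishing $(1,1)$ block, then combine the $(1,1)$ and $(2,2)$ block relations of $BX=XB$ (the paper's equations~(\ref{equ1corona}) and~(\ref{equ4corona})) to produce the factor $\mu_i^2-\mu_j^2$ after the scalar hypothesis $D_\lambda=\lambda I$ kills the $Z$-coupling, and finally use the $(1,2)$ block together with the invertibility of $D_\mu$ to force $Z=O$. The only cosmetic difference is that you specialize $D_\lambda=\lambda I$ before writing the $(2,2)$ relation, whereas the paper carries the general $(\lambda_i-\lambda_j)$ term through to its equation~(\ref{equ7corona}) and then sets it to zero.
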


\begin{proof}
Since $A \in \mathcal{S}(K_n)$, the corresponding block of the matrix $X$ is the zero matrix. Let
\[X = \begin{bmatrix}
O & X_{\mu}\\
X_{\mu}^T & Y_{\lambda}\\ 
\end{bmatrix} \] 
where $[X_{\mu}]_{i,j}=x_{i,n+j}$ and $[Y_{\lambda}]_{i,j}=x_{n+i,n+j}.$ If $BX = XB,$ then

\begin{align}
 &D_{\mu}X_{\mu}^T = X_{\mu}D_{\mu}, \label{equ1corona} \\  
 &AX_{\mu}+D_{\mu}Y_{\lambda} = X_{\mu}D_{\lambda}, \label{equ2corona}\\
 &D_{\lambda}X_{\mu}^T = X_{\mu}^TA + Y_{\lambda} D_{\mu},  \label{equ3corona}\ \, \text{and}\\
 & D_{\mu}X_{\mu}+ D_{\lambda}Y_{\lambda} = X_{\mu}^TD_{\mu}+ Y_{\lambda}D_{\lambda}.
\label{equ4corona}
\end{align}

From Equation \eqref{equ1corona} we obtain: 
\begin{eqnarray}
\label{equ5corona}
[D_{\mu}X_{\mu}^T - X_{\mu}D_{\mu}]_{i,j}= \mu_i x_{j, n +i}  - \mu_j x_{i ,n+j}=0 \text{ or } x_{j n+i} = \frac{\mu_j x_{i, n+j}}{\mu_i} 
\end{eqnarray}
while from Equation \eqref{equ4corona}, we get: 
\begin{eqnarray}
\label{equ6corona}
[X_{\mu}^TD_{\mu}+ Y_{\lambda}D_{\lambda} - (D_{\mu}X_{\mu}+ D_{\lambda}Y_{\lambda})]_{i,j} = \mu_j x_{j, n + i} - \mu_i x_{i, n + j} - (\lambda_i - \lambda_j) x_{n + i, n+j} = 0. 
\end{eqnarray}
Together, we yield: 
\begin{eqnarray}
\label{equ7corona}
 \frac{(\mu_j - \mu_i) (\mu_j + \mu_i)}{\mu_i}x_{i, n+j} - (\lambda_i - \lambda_j) x_{n + i, n+j} = 0. 
\end{eqnarray}
Because $D_{\lambda} = \lambda I_n$, and $\mu_i \neq \pm \mu_j$ for all $i,j \in [n]$ with $i\neq j$, $X_{\mu}=O$. So the equations \eqref{equ1corona} to \eqref{equ4corona} simplify to 
  
  \begin{align}
& D_{\mu}Y_{\lambda} = O, \label{equ1ppcorona} \\
& O = Y_{\lambda}D_{\mu}, \, \text{and} \label{equ2ppcorona} \\
& \lambda Y_{\lambda} = \lambda Y_{\lambda} \label{equ3ppcorona}
\end{align}

Equations \eqref{equ1ppcorona}  and \eqref{equ2ppcorona} yield that $Y_{\lambda} = O$ since each $\mu_i \neq 0$ for all $i\in [n]$.
\end{proof}

Considering the corona product of graphs each with more than one vertex leads to the next result connected to barbell partitions of the corona product.

\begin{thm}
Let $G$ and $H$ be graphs each with at least two vertices. Then $G \circ H$ has a barbell partition.
\end{thm}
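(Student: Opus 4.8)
The plan is to exhibit an explicit barbell partition built from two of the copies of $H$ sitting inside the corona product. Write $V(G)=\{g_i\}_{i=1}^k$ and use the copies of $H$ with vertex sets $V(H_i)=\{h_{i,j}\}_{j=1}^m$ as in Definition~\ref{defcor}. Since $G$ has at least two vertices we have $k\ge 2$, and since $H$ has at least two vertices we have $m\ge 2$; both inequalities will be used. I would set $W_1=\{h_{1,j}\}_{j=1}^m$, $W_2=\{h_{2,j}\}_{j=1}^m$, and $R=V(G\circ H)\setminus(W_1\cup W_2)$, so that $R$ contains every vertex $g_i$ together with every remaining copy $\{h_{i,j}\}_{j=1}^m$ for $i\ge 3$.

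The verification of the three defining conditions is then short. Condition (1) is immediate, since $m\ge 2$ forces $W_1,W_2\neq\emptyset$. For condition (2), the only vertex outside the $i$th copy that is adjacent to any vertex $h_{i,j}$ is $g_i$ (by Definition~\ref{defcor} every edge incident to a copy of $H$ lies either within that copy or joins it to its unique attached vertex $g_i$); since $g_1,g_2\in R$ and $g_1\neq g_2$, there are no edges between $W_1$ and $W_2$.

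The one point requiring care is condition (3): for each $r\in R$ and each $i\in\{1,2\}$ we must check $\abs{N_{G\circ H}(r)\cap W_i}\neq 1$. I would split $R$ by vertex type. Each vertex $g_\ell$ is adjacent to all of its own copy and to no other copy, so $\abs{N_{G\circ H}(g_\ell)\cap W_i}$ equals either $m$ (when $g_\ell\in\{g_1,g_2\}$ meets its own copy) or $0$; the value $m\ge 2$ is exactly what rules out the forbidden count $1$, and this is the sole place the hypothesis $m\ge 2$ is used. Every vertex $h_{i,j}$ with $i\ge 3$ has all of its neighbors inside $\{h_{i,j'}\}_{j'=1}^m\cup\{g_i\}$, which is disjoint from $W_1\cup W_2$, so the relevant intersections have size $0$. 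Hence no vertex of $R$ meets $W_1$ or $W_2$ in exactly one vertex, and $\{R,W_1,W_2\}$ is a barbell partition.

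Equivalently, the whole argument can be routed through Observation~\ref{fortbarb}: the same computation shows that $\{h_{1,j}\}_{j=1}^m$ and $\{h_{2,j}\}_{j=1}^m$ are each forts of $G\circ H$ (the only external vertex meeting the $i$th copy is $g_i$, which meets it in $m\ge 2$ vertices), and they are disjoint and non-adjacent, hence form a pair of separated forts. Either framing makes the result fall out at once, so I do not anticipate a genuine obstacle; the only substantive observation is that the attached vertices $g_1,g_2$ dominate their entire copies, which is why $H$ having at least two vertices is precisely the condition that prevents the neighbor count from being $1$.
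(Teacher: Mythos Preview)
Your proof is correct and follows essentially the same approach as the paper: you choose the identical partition $W_1=\{h_{1,j}\}_{j=1}^m$, $W_2=\{h_{2,j}\}_{j=1}^m$, $R=V(G\circ H)\setminus(W_1\cup W_2)$ and verify the three conditions in the same way, with the key point being that $g_i$ is the unique external neighbor of the $i$th copy and meets it in $m\ge 2$ vertices. Your additional reformulation via Observation~\ref{fortbarb} is a nice touch but not needed.
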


\begin{proof}
Let $V(G)=\{g_i\}_{i=1}^k$, $V(H)=\{h_j\}_{j=1}^m$, and $V(G \circ H)=\{g_i\}_{i=1}^k \cup \{h_{i,j}\}_{i=1,}^k{}_{j=1}^m$ as in Definition \ref{defcor}.  Since $\abs{V(G)} \geq 2$, one can let $W_1=\{h_{1,j}\}_{j=1}^m$, $W_2=\{h_{2,j}\}_{j=1}^m$, and $R=V(G) \setminus (W_1 \cup W_2)$.  We will now show that $\{R,W_1,W_2\}$ is a barbell partition of $G \circ H$.

Next note that $W_1, W_2 \neq \emptyset$ and for each $i \in \{1,2\}$, given $v \in V(G \circ H) \setminus W_i$ and $w_i \in W_i$, we have $vw_i \in E(G \circ H)$ if and only if $v=g_i$.  So there do not exist vertices $w_1 \in W_1$ and $w_2 \in W_2$ such that $w_1w_2 \in E(G \circ H)$.  In addition, since $\abs{V(H)} \geq 2$, it follows that for each $i \in \{1,2\}$, $\abs{N_{G \circ H}(g_i) \cap W_i} \geq 2$.  So for each $r \in R$ and each $i \in \{1,2\}$, $\abs{N_{G \circ H}(r) \cap W_i} \neq 1$.  Thus, $\{R, W_1, W_2\}$ is a barbell partition of $G \circ H$.
\end{proof}

If $\abs{V(G)}=1$ or $\abs{V(H)}=1$, then it is possible that $G \circ H$ admits a barbell partition.  In particular, if $\abs{V(G)}=1$ and $H$ is a graph which admits a barbell partition and has no isolated vertices, then by Corollary \ref{domin} it follows that $G \circ H$ admits a barbell partition.  In addition, if $\abs{V(H)}=1$ and $G$ is a graph which admits a barbell partition, then it follows by repeated application of Observation \ref{relpen} that $G \circ H$ admits a barbell partition.  

\begin{cor}
    Let $G$ and $H$ be graphs each with at least two vertices.  Then $G \circ H$ is not a member of  $G^{SSP}$.
\end{cor}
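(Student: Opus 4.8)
The plan is to obtain this corollary as an immediate consequence of the theorem that immediately precedes it together with the structural obstruction recorded at the start of Section~2. First I would observe that the hypotheses of the corollary---namely that $G$ and $H$ each have at least two vertices---are precisely the hypotheses of the preceding theorem, so that theorem applies verbatim and guarantees that $G \circ H$ admits a barbell partition. Concretely, the witness is already exhibited in that theorem's proof: with $W_1=\{h_{1,j}\}_{j=1}^m$, $W_2=\{h_{2,j}\}_{j=1}^m$, and $R=V(G \circ H)\setminus(W_1 \cup W_2)$, the triple $\{R,W_1,W_2\}$ is a barbell partition.

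Second, I would invoke the lemma of \cite{SSPforGs} recalled at the beginning of Section~2, which states that any graph possessing a barbell partition admits a matrix $M \in \mathcal{S}(G \circ H)$ that fails the SAP, and therefore fails the SSP. Since membership in $G^{SSP}$ demands that \emph{every} matrix in $\mathcal{S}(G \circ H)$ satisfy the SSP, the existence of a single such $M$ that does not is enough to conclude $G \circ H \notin G^{SSP}$.

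Chaining these two facts together completes the argument, so the proof reduces to a single sentence: by the preceding theorem $G \circ H$ has a barbell partition, and hence by the cited lemma $G \circ H \notin G^{SSP}$. There is essentially no obstacle to overcome here, since both ingredients are already established in the excerpt; the only point requiring attention is confirming that the hypotheses align, and they do, because the ``at least two vertices'' condition on both $G$ and $H$ is exactly what the preceding theorem requires.
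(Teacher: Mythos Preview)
Your proposal is correct and matches the paper's intended reasoning exactly: the corollary is stated without proof because it follows immediately from the preceding theorem (which produces a barbell partition of $G \circ H$) together with the lemma from \cite{SSPforGs} recalled in Section~2 that any graph with a barbell partition lies outside $G^{SSP}$. There is nothing to add.
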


We next turn to the standard definitions for the Cartesian product and tensor product of two graphs.

\begin{defn}
Let $G$ and $H$ be graphs.  Then the Cartesian product of $G$ and $H$ denoted $G \Box H$ is the graph with vertex set $V(G \Box H)=V(G) \times V(H)$ and edge set $E(G \Box H)$ such that given $(g_1,h_1), (g_2,h_2) \in V(G \Box H)$, $(g_1,h_1)(g_2,h_2) \in E(G \Box H)$ provided either
\begin{itemize}
    \item $g_1=g_2$ and $h_1h_2 \in E(H)$ or
    \item $h_1=h_2$ and $g_1g_2 \in E(G)$.
\end{itemize}
\end{defn}

\begin{defn}
Let $G$ and $H$ be graphs.  Then the tensor product of $G$ and $H$ denoted $G \times H$ is the graph with vertex set $V(G \times H)=V(G) \times V(H)$ and edge set $E(G \times H)$ such that given $(g_1,h_1), (g_2,h_2) \in V(G \times H)$, $(g_1,h_1),(g_2,h_2) \in E(G \times H)$ provided $g_1g_2 \in E(G)$ and $h_1h_2 \in E(H)$.
\end{defn}

The next result represents a closure-type statement regarding the above graph products and graphs that admit barbell partitions.

\begin{thm}
Let $G$ be a graph and $H$ be a graph admitting a barbell partition, then 
    \begin{itemize}
        \item $L_1=G \Box H$ admits a barbell partition.
        \item $L_2=G \times H$ admits a barbell partition.
        %\item $K_3=G \boxtimes H$ admits a barbell partition.
    \end{itemize}
\end{thm}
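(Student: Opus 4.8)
The plan is to lift a fixed barbell partition $\{R, W_1, W_2\}$ of $H$ to both products by ``fattening'' each part across all of $G$. Concretely, in either product I would set $\tilde{W}_1 = V(G) \times W_1$, $\tilde{W}_2 = V(G) \times W_2$, and $\tilde{R} = V(G) \times R$. Since $\{R, W_1, W_2\}$ partitions $V(H)$, this triple partitions $V(G) \times V(H)$, which is the common vertex set of $L_1 = G \Box H$ and $L_2 = G \times H$; moreover $\tilde{W}_1$ and $\tilde{W}_2$ are nonempty because $W_1$, $W_2$, and $V(G)$ are. It then remains to verify the edge condition (no $\tilde{W}_1$--$\tilde{W}_2$ edges) and the neighborhood condition ($\abs{N(v) \cap \tilde{W}_i} \neq 1$ for each $v \in \tilde{R}$ and $i \in \{1,2\}$) for each product.

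For the edge condition I would argue uniformly. Any edge joining a vertex $(g_1, h_1)$ with $h_1 \in W_1$ to a vertex $(g_2, h_2)$ with $h_2 \in W_2$ forces, in both products, the relation $h_1 h_2 \in E(H)$ (in the tensor product directly, and in the Cartesian product after ruling out $h_1 = h_2$, which is impossible as $W_1 \cap W_2 = \emptyset$). But $h_1 \in W_1$ and $h_2 \in W_2$ have no edge between them in $H$, a contradiction. Hence $\tilde{W}_1$ and $\tilde{W}_2$ are separated in both $L_1$ and $L_2$.

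The neighborhood condition is where the two products diverge, and it is the step I would treat most carefully. Fix $(g, r) \in \tilde{R}$, so $r \in R$. In the Cartesian product the only neighbors of $(g,r)$ with second coordinate in $W_i$ lie in the $H$-fiber over $g$, giving $\abs{N_{L_1}((g,r)) \cap \tilde{W}_i} = \abs{N_H(r) \cap W_i} \neq 1$ immediately. In the tensor product a neighbor of $(g, r)$ has the form $(g', h')$ with $g' \in N_G(g)$ and $h' \in N_H(r)$, so the count factors as $\abs{N_{L_2}((g,r)) \cap \tilde{W}_i} = \abs{N_G(g)} \cdot \abs{N_H(r) \cap W_i}$. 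Here the key observation is that a product of two nonnegative integers equals $1$ only when both factors equal $1$; since $\abs{N_H(r) \cap W_i} \neq 1$, this product is $0$ (when either factor vanishes) or at least $2$, but never exactly $1$. Thus $\{\tilde{R}, \tilde{W}_1, \tilde{W}_2\}$ satisfies all three axioms for both $L_1$ and $L_2$.

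The one genuine subtlety, which I would flag as the main obstacle, is precisely the multiplicative behavior of degrees in the tensor product: one cannot expect the inequality $\abs{N_H(r) \cap W_i} \neq 1$ to survive unchanged, but must instead note that multiplying by the extra factor $\abs{N_G(g)}$ can never manufacture a neighbor count of exactly $1$ out of a factor that is itself never $1$. Everything else reduces to a routine unpacking of the two product definitions, and condition~1 is immediate from $V(G), W_1, W_2 \neq \emptyset$.
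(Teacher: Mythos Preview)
Your proposal is correct and follows essentially the same approach as the paper: you lift the barbell partition of $H$ by taking $\tilde{R}=V(G)\times R$, $\tilde{W}_i=V(G)\times W_i$, verify the separation condition using $h_1\neq h_2$ and $h_1h_2\notin E(H)$, and handle the neighborhood condition via $\abs{N_{L_1}((g,r))\cap\tilde{W}_i}=\abs{N_H(r)\cap W_i}$ and $\abs{N_{L_2}((g,r))\cap\tilde{W}_i}=\deg_G(g)\cdot\abs{N_H(r)\cap W_i}$, noting that neither can equal~$1$. This matches the paper's argument in both structure and detail.
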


\begin{proof}
%\begin{figure}[h]
  %\centering
  %\includegraphics[width=0.7\linewidth]{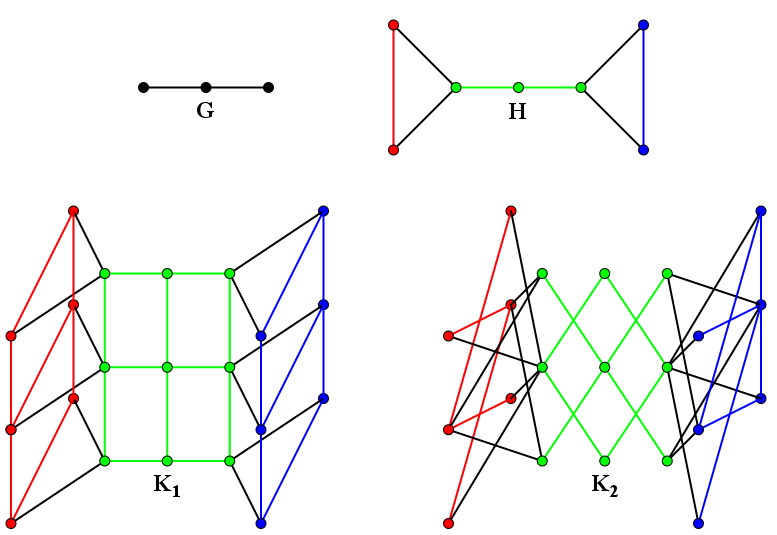}
  %\caption{Illustrations of Various Graph Products}
  %\label{box_barbell}
%\end{figure}

Let $\{R, W_1, W_2\}$ be a barbell partition of $H$, and let $R'$, $W_1'$, and $W_2'$ be defined as follows:   
\begin{itemize}
    \item $R'=\left\{ (g,h) \in V(G) \times V(H): h \in R\right\}$,
    \item $W_1'=\left\{ (g,h) \in V(G) \times V(H): h \in W_1\right\}$, and
    \item $W_2'=\left\{ (g,h) \in V(G) \times V(H): h \in W_2\right\}$.
\end{itemize}
We will show that $\{R', W_1', W_2'\}$ is a barbell partition of $L_j$ for each $j \in \{1,2\}$.

\vspace{0.1in}

\begin{addmargin}[0.87cm]{0cm}{\bf Claim 1:}
$W'_1,W'_2 \not = \emptyset$ and for each $j \in \{1,2\}$, there do not exist vertices $w_1 \in W'_1$ and $w_2 \in W'_2$ such that $w_1w_2 \in E(L_j)$.
\vspace{0.1in}

\noindent {\em Proof of Claim 1.}
Since $W_1$ and $W_2$ are nonempty, $W_1'$ and $W_2'$ are nonempty.  Now, let $w_1 \in W_1'$ and $w_2 \in W'_2$ be arbitrary.  So there exist $g_1,g_2 \in V(G)$, $h_1 \in W_1$, and $h_2 \in W_2$ such that $w_1=(g_1,h_1)$ and $w_2=(g_2,h_2)$.  Since $h_1 \in W_1$ and $h_2 \in W_2$, it follows that $h_1 \neq h_2$ and $h_1h_2 \not \in E(H)$.  Thus $w_1$ and $w_2$ are not adjacent in $L_j$ for each $j \in \{1,2\}$. Finally, since $w_1$ and $w_2$ were chosen arbitrarily, for each $j \in \{1,2\}$ there do not exist vertices $w_1 \in W'_1$ and $w_2 \in W'_2$ such that $w_1w_2 \in E(L_j)$.
\end{addmargin}

\vspace{0.1in}

\begin{addmargin}[0.87cm]{0cm}{\bf Claim 2:}
For each $r \in R'$, $j \in \{1,2\}$, and $i \in \{1,2\}$, we have that $\left\vert N_{L_j}(r) \cap W'_i \right\vert \neq 1$.
\vspace{0.1in}

\noindent {\em Proof of Claim 2.}
Let $r \in R'$ and $i_0 \in \{1,2\}$ be arbitrary. Since $r \in R'$ there exist $h_r \in R$ and $g_r \in V(G)$ such that $r=(g_r,h_r)$.  Now, let $k \in \mathbb Z$ such that $\left\vert N_H(h_r) \cap W_{i_0} \right\vert =k$.  Since $\{R,W_1,W_2\}$ is a barbell partition of $H$, it follows that $k \neq 1$.

First consider $L_1$, and note that since $h_r \neq w_{i_0}$ for each $w_{i_0} \in W_{i_0}$, $r$ is neighbors with $(g,w_{i_0}) \in W'_{i_0}$ if and only if $g=g_r$ and $h_r$ is neighbors with $w_{i_0}$ in $H$.  So, it follows that
\[\left\vert N_{L_1}(r) \cap W_{i_0} \right\vert = k.\]
Next consider $L_2$, and note that for each neighbor $w_{i_0} \in W_{i_0}$ of $h_r$, $r$ has $\deg_G(g_r)$ many neighbors in $W'_{i_0}$, specifically the set of vertices 
\[\big{\{}(g,w_{i_0}): g \in N_G(g_r)\big{\}}.\]
So, it follows that
\[\left\vert N_{L_2}(r) \cap W_{i_0} \right\vert = k\cdot \deg_G(g_r).\]
%Similarly, since $r \in R'$ is neighbors with $(g,w_{i_0}) \in W'_{i_0}$ in $K_3$ if and only if $r \in R'$ is neighbors with $(g,w_{i_0}) \in W'_{i_0}$ in either $K_1$ or $K_2$, it follows that \[\left\vert N_{K_3}(r) \cap W_{i_0} \right\vert = k + k\cdot \deg_G(g_r).\]
Since $k \neq 1$, it follows that $k\cdot \deg_G(g_r) \neq 1$.  Finally, since $r \in R'$ and $i_0 \in \{1,2\}$ were arbitrary choices, for each $r \in R'$, $j \in \{1,2\}$, and $i \in \{1,2\}$, we have that $\left\vert N_{L_j}(r) \cap W'_i \right\vert \neq 1$.
\end{addmargin}
\vspace{0.1in}

\noindent Thus, $\{R', W_1', W_2'\}$ is a barbell partition of $L_j$ for each $j \in \{1,2\}$.
\end{proof}

\begin{cor}
    Let $G$ and $H$ be graphs such that $H$ admits a barbell partition.  Then 
    $G \Box H$ and $G \times H$  is not a member of  $G^{SSP}$.
\end{cor}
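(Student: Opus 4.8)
The plan is to chain together the immediately preceding theorem with the foundational lemma on barbell partitions recalled in Section~2. First I would invoke the theorem directly above this corollary, which asserts that whenever $H$ admits a barbell partition, both $L_1 = G \Box H$ and $L_2 = G \times H$ admit barbell partitions. This is exactly the hypothesis provided: $H$ admits a barbell partition, so the theorem applies verbatim and furnishes explicit barbell partitions $\{R',W_1',W_2'\}$ of each product, obtained by lifting a barbell partition $\{R,W_1,W_2\}$ of $H$ along the first coordinate.

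Next I would apply the Lemma from \cite{SSPforGs} (stated at the start of Section~2), which guarantees that any graph possessing a barbell partition contains some matrix $M \in \mathcal{S}(G)$ that fails the SAP, and hence fails the SSP. Since membership in $G^{SSP}$ requires that \emph{every} matrix in $\mathcal{S}(G)$ satisfy the SSP, the existence of a single such witness $M$ immediately excludes the graph from $G^{SSP}$. Applying this exclusion criterion separately to $G \Box H$ and to $G \times H$ yields the stated conclusion.

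Because both implications are already established earlier in the paper, there is no substantive obstacle here; the corollary is a direct consequence. The only points requiring care are verifying that the hypothesis of the preceding theorem matches ours exactly (namely, that it is $H$, not $G$, that is assumed to carry the barbell partition) and that we are using the lemma in the direction ``admits a barbell partition $\Longrightarrow$ not in $G^{SSP}$,'' which is precisely the direction proved in \cite{SSPforGs}.
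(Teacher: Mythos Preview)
Your proposal is correct and matches the paper's intended reasoning: the corollary is stated without proof precisely because it follows immediately by combining the preceding theorem (both products inherit a barbell partition from $H$) with the lemma from \cite{SSPforGs} recalled in Section~2. There is nothing to add.
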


%\textcolor{red}{(Beginning of new material)}

\begin{thm}
Let $G$ and $H$ be graphs each containing a pair of disjoint forts.  Then $K=G \Box H$ admits a barbell partition.
\end{thm}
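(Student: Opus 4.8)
The plan is to exploit the fact that the Cartesian product of a fort of $G$ with a fort of $H$ is again a fort of $G \Box H$, and that coordinatewise disjointness forces two such product forts apart in $G \Box H$. Concretely, let $A, B$ be the disjoint (nonempty) forts of $G$ and let $C, D$ be the disjoint (nonempty) forts of $H$. I would set $W_1 = A \times C$ and $W_2 = B \times D$, viewed as subsets of $V(G) \times V(H) = V(G \Box H)$, and show that $\{W_1, W_2\}$ is a pair of separated forts; the conclusion then follows immediately from Observation \ref{fortbarb}.

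First I would prove the building block: if $P$ is a fort of $G$ and $Q$ is a fort of $H$, then $P \times Q$ is a fort of $G \Box H$. Take any $(g,h) \notin P \times Q$, so $g \notin P$ or $h \notin Q$. By the product edge rule, a neighbor of $(g,h)$ lying inside $P \times Q$ must either share the first coordinate $g$ (which requires $g \in P$ and contributes $\abs{N_H(h) \cap Q}$ such neighbors) or share the second coordinate $h$ (which requires $h \in Q$ and contributes $\abs{N_G(g) \cap P}$). A short case check on which of $g \in P$, $h \in Q$ holds shows that the total number of neighbors in $P \times Q$ equals $\abs{N_H(h)\cap Q}$, $\abs{N_G(g)\cap P}$, or $0$; in each case the fort property of $Q$ or $P$, applied to the external coordinate $h \notin Q$ or $g \notin P$, yields a count $\neq 1$. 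Hence $P \times Q$ is a fort. Applying this to $A \times C$ and to $B \times D$ produces two forts of $G \Box H$.

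Next I would verify separation. Disjointness is immediate, since $(A \times C) \cap (B \times D) = (A \cap B) \times (C \cap D) = \emptyset$ because $A \cap B = \emptyset$. For the absence of edges, observe that an edge of $G \Box H$ joining some $(a,c) \in A \times C$ to some $(b,d) \in B \times D$ would require either $a = b$ or $c = d$; the former is impossible since $A \cap B = \emptyset$ and the latter since $C \cap D = \emptyset$. Thus no vertex of $A \times C$ is adjacent to a vertex of $B \times D$. Both sets are nonempty as products of nonempty sets, so $\{A \times C,\, B \times D\}$ is a pair of separated forts of $G \Box H$, and Observation \ref{fortbarb} gives the barbell partition $\{V(G \Box H) \setminus ((A\times C) \cup (B \times D)),\, A \times C,\, B \times D\}$.

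The only real content lies in the first step, namely verifying that a product of forts is a fort; the separation established in the third step comes essentially for free from the coordinatewise disjointness together with the rigid edge structure of the Cartesian product. I therefore expect the product-fort case analysis to be the main, though routine, obstacle, the one subtlety being to handle correctly the mixed cases in which exactly one of $g \in P$, $h \in Q$ holds.
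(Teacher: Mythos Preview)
Your proposal is correct and follows essentially the same approach as the paper: both take $W_1 = F_G^1 \times F_H^1$ and $W_2 = F_G^2 \times F_H^2$ and verify the same case analysis on whether $g$ lies in the $G$-fort and $h$ in the $H$-fort. The only difference is packaging: you isolate the statement ``a product of forts is a fort'' as a lemma and invoke Observation~\ref{fortbarb}, whereas the paper verifies the barbell conditions directly, but the computations are identical.
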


\begin{proof}
Let $F_G^1, F_G^2$ be a pair of forts of $G$ such that $F_G^1 \cap F_G^2=\emptyset$, and likewise let $F_H^1, F_H^2$ be a pair of forts of $H$ such that $F_H^1 \cap F_H^2=\emptyset$.  Let $W_1=\{(g,h): g \in F_G^1 \text{ and } h \in F_H^1\}$, $W_2=\{(g,h): g \in F_G^2 \text{ and } h \in F_H^2\}$, and $R=V(K) \setminus (W_1 \cup W_2)$.  We will show that $\{R,W_1,W_2\}$ is a barbell partition of $K$.

\vspace{0.1in}

\begin{addmargin}[0.87cm]{0cm}{\bf Claim 1:}
$W_1,W_2 \not = \emptyset$ and for each $j \in \{1,2\}$, there do not exist vertices $w_1 \in W_1$ and $w_2 \in W_2$ such that $w_1w_2 \in E(K)$.

\vspace{0.1in}

\noindent {\em Proof of Claim 1.}
By construction $W_1,W_2 \neq \emptyset$.  Furthermore, given $(g_1,h_1) \in W_1$ and $(g_2,h_2) \in W_2$, since $F_G^1 \cap F_G^2= \emptyset$ and $F_H^1 \cap F_H^2 =\emptyset$, it follows that $g_1 \neq g_2$ and $h_1 \neq h_2$.  Thus $(g_1,h_1)(g_2,h_2) \not \in E(K)$.  Furthermore, since $(g_1,h_1)$ and $(g_2,h_2)$ were chosen arbitrarily, there do not exist vertices $w_1 \in W_1$ and $w_2 \in W_2$ such that $w_1w_2 \in E(K)$.
\end{addmargin}

\vspace{0.1in}

\begin{addmargin}[0.87cm]{0cm}{\bf Claim 2:}
For each $r \in R$ and $i \in \{1,2\}$, we have that $\left\vert N_{K}(r) \cap W_i \right\vert \neq 1$.
\vspace{0.1in}

\noindent {\em Proof of Claim 2.}
First consider $W_1$.  For each $r=(g,h) \in R$, either $g \not \in F_G^1$ or $h \not \in F_H^1$.  First if $g \not \in F_G^1$ and $h \not \in F_H^1$, then $|N_K(r) \cap W_1|=0$.  So without loss of generality suppose $g \not \in F_G^1$ and $h \in F_H^1$.  Since $g \not \in F_G^1$, there exists $k \in \mathbb N$ such that $\abs{N_G(g) \cap F_G^1}=k \neq 1$.  Furthermore for each $g' \in F_G^1$, $r=(g,h)$ is neighbors with $(g',h) \in W_1$ if and only if $g$ is neighbors with $g'$ in $G$.  So, it follows that $\left\vert N_{K}(r) \cap W_1 \right\vert = k$.  Thus, for each $r=(g,h) \in R$, $\left\vert N_{K}(r) \cap W_1 \right\vert \neq 1$.  A similar argument shows that for each $r=(g,h) \in R$, $\left\vert N_{K}(r) \cap W_2 \right\vert \neq 1$.
\end{addmargin}

\vspace{0.1in}

Thus $\{R,W_1,W_2\}$ is a barbell partition of $K=G\Box H$.
\end{proof}

Many graphs contain a pair of disjoint forts. Examples of graphs which contain a disjoint pair of forts, but which do not admit barbell partitions are complete graphs $K_n$, $n \geq 4$ and even cycles.

\begin{cor}
Let $m,k \in \mathbb Z^+$ with $m,k \geq 4$.  Then $G=K_m \Box K_k$ admits a barbell partition. 
\end{cor}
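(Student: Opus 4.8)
The plan is to apply the preceding theorem, which states that if $G$ and $H$ each contain a pair of disjoint forts, then $G \Box H$ admits a barbell partition. Thus the entire task reduces to verifying that $K_m$ contains a pair of disjoint forts whenever $m \geq 4$. Since the corollary takes $G=H=K_m$ (with $m=k$ in the statement, though the hypothesis $m,k \geq 4$ suggests treating $K_m$ and $K_k$ separately), it suffices to exhibit two disjoint nonempty forts in any complete graph on at least four vertices.

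First I would recall the definition of a fort: a subset $F \subseteq V(K_m)$ is a fort if no vertex outside $F$ is adjacent to exactly one vertex of $F$. In a complete graph, every vertex outside $F$ is adjacent to \emph{all} vertices of $F$, so a vertex outside $F$ is adjacent to exactly one vertex of $F$ precisely when $|F|=1$. Hence the key observation is that a subset $F$ of $K_m$ is a fort if and only if $|F| \neq 1$, i.e., $F = \emptyset$ or $|F| \geq 2$. This makes identifying disjoint forts straightforward: any two disjoint subsets each of size at least $2$ will serve.

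Next I would construct the pair explicitly. Since $m \geq 4$, I can partition (or simply select) $V(K_m)$ to contain two disjoint sets $F^1 = \{v_1, v_2\}$ and $F^2 = \{v_3, v_4\}$, each of size $2$. By the observation above, each is a fort of $K_m$ (every external vertex is adjacent to both of its elements, hence never to exactly one), and they are disjoint by construction. The same argument applies verbatim to $K_k$ since $k \geq 4$. Therefore both $K_m$ and $K_k$ contain a pair of disjoint forts, and the previous theorem immediately yields that $K_m \Box K_k$ admits a barbell partition.

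I do not anticipate a genuine obstacle here, as the result is a direct corollary. The only point requiring minor care is the fort condition at the endpoints of the construction: one must confirm that the external vertices (of which there are $m-2 \geq 2$ in $K_m$) genuinely fail to be adjacent to exactly one element of each $F^i$, which follows from completeness. One could note in passing, as the surrounding text does, that $K_m$ for $m \geq 4$ is an example of a graph possessing disjoint forts yet admitting no barbell partition of its own, which is why the Cartesian product construction is needed to produce the barbell partition rather than lifting one from a factor.
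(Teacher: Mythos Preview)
Your proposal is correct and matches the paper's intended approach exactly: the corollary is meant as an immediate consequence of the preceding theorem on Cartesian products of graphs with disjoint forts, together with the observation (stated just before the corollary) that $K_n$ for $n\geq 4$ contains a pair of disjoint forts. Your explicit verification that any two disjoint $2$-element subsets of $K_m$ are forts is precisely the content the paper leaves implicit.
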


\begin{cor}
Let $C_m$ and $C_k$ each be even cycles with $m,k\geq 4$. Then $G=C_m \Box C_k$ admits a barbell partition.
\end{cor}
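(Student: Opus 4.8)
The plan is to invoke the immediately preceding theorem, which guarantees that the Cartesian product of two graphs, each containing a pair of disjoint forts, admits a barbell partition. Consequently, it suffices to produce, for each even cycle, a pair of disjoint forts; the corollary then follows by applying that theorem to $G=C_m\Box C_k$.

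First I would label the vertices of an even cycle $C_{2\ell}$ as $1,2,\dots,2\ell$ in cyclic order and take the bipartition into $F_1=\{1,3,\dots,2\ell-1\}$ (odd labels) and $F_2=\{2,4,\dots,2\ell\}$ (even labels). These sets are disjoint and nonempty. The one substantive step is to verify that each is a fort. Since every vertex of a cycle has degree $2$, the defining condition for a fort, namely that no vertex outside the set is adjacent to exactly one vertex of the set, is equivalent to requiring that every vertex outside the set has either $0$ or $2$ neighbors in it. Each vertex $2j\in F_2$ has neighbors $2j-1$ and $2j+1$ (indices taken modulo $2\ell$), both of which are odd; hence every vertex outside $F_1$ has exactly two neighbors in $F_1$, so $F_1$ is a fort. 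The identical argument shows $F_2$ is a fort, and thus $\{F_1,F_2\}$ is a pair of disjoint forts of $C_{2\ell}$.

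Applying this to $C_m$ and $C_k$, each of which is an even cycle with $m,k\geq 4$, we obtain that both contain a pair of disjoint forts. The preceding theorem then yields immediately that $G=C_m\Box C_k$ admits a barbell partition. I do not anticipate any genuine obstacle: the only point requiring care is the fort verification, which reduces to the elementary observation that in an even cycle the two neighbors of any vertex always lie in the opposite class of the even/odd bipartition, so each such class is adjacent to every outside vertex in exactly two places rather than one.
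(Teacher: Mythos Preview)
Your proposal is correct and follows essentially the same approach as the paper: the corollary is obtained by applying the preceding theorem on Cartesian products of graphs each containing a pair of disjoint forts, together with the observation (which the paper states just before the corollary) that even cycles contain such a pair. Your explicit construction of the forts as the two bipartition classes and the degree-$2$ verification is exactly the intended justification, spelled out in slightly more detail than the paper provides.
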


Path graphs and odd cycles are examples of graphs which do not contain a pair of disjoint forts.  It is easy to check that $P_2 \Box P_3$ does not admit a barbell partition.  However, while grid graphs do not seem to admit barbell partitions, the following theorem shows that certain prism grid graphs do admit barbell partitions, even when the cycles they are built from may be of odd length.

%\textcolor{red}{(End of new material). This looks good to me - thanks Houston (SF)}

\begin{thm}
Let $m,k \in \mathbb Z^+$ such that $k \geq 4$, then $K=C_k \Box C_{mk}$ admits a barbell partition.
\end{thm}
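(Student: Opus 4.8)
The plan is to exhibit an explicit pair of separated forts in $K$ and then invoke Observation \ref{fortbarb}, which equates such a pair with a barbell partition. First I would coordinatize $V(K)=\mathbb{Z}_k\times\mathbb{Z}_{mk}$, writing a vertex as $(a,b)$ with $a\in\mathbb{Z}_k$ and $b\in\mathbb{Z}_{mk}$, so that the neighbors of $(a,b)$ are exactly $(a\pm 1,b)$ and $(a,b\pm 1)$ (with each index reduced in its own modulus). The central construction is the family of diagonal sets $D_c=\{(a,b): b-a\equiv c \pmod k\}$ for $c\in\mathbb{Z}_k$, which partition $V(K)$. Here is the one place the hypothesis genuinely enters: because $mk$ is a multiple of $k$, reduction of the second coordinate modulo $k$ is well defined and, crucially, consistent across the wrap-around of the long cycle $C_{mk}$ (as $b$ passes from $mk-1$ to $0$, the residue $b-a \bmod k$ increments by $1$ exactly as it does at every interior step, since $mk\equiv 0 \pmod k$). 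Without divisibility the diagonal would fail to close up at the seam and $D_c$ would not be a fort.

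Next I would verify that each $D_c$ is a fort. For a vertex $(a,b)\notin D_c$, so $b-a\not\equiv c \pmod k$, a direct adjacency check shows that among its four neighbors the quantity (second coordinate minus first) equals $(b-a)-1$ for the pair $(a+1,b),(a,b-1)$ and equals $(b-a)+1$ for the pair $(a-1,b),(a,b+1)$. Hence $(a,b)$ has $0$ or $2$ neighbors in $D_c$, and it could only have neighbors along both of these diagonals simultaneously if $(b-a)-1\equiv c\equiv (b-a)+1$, i.e. $2\equiv 0\pmod k$, which is impossible for $k\geq 4$. In particular no external vertex has exactly one neighbor in $D_c$, so each $D_c$ is a nonempty fort.

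The same computation shows that every neighbor of a vertex of $D_c$ lies in $D_{c-1}\cup D_{c+1}$, so an edge joins $D_{c_1}$ to $D_{c_2}$ only when $c_2\equiv c_1\pm 1\pmod k$. Taking $W_1=D_0$ and $W_2=D_2$, these are disjoint and nonempty, and since $2\not\equiv 0,\pm 1\pmod k$ whenever $k\geq 4$, there are no edges between them; thus $\{W_1,W_2\}$ is a pair of separated forts of $K$, and Observation \ref{fortbarb} yields the barbell partition $\{V(K)\setminus(W_1\cup W_2),W_1,W_2\}$. The only real obstacle is the bookkeeping at the seam of $C_{mk}$: one must confirm that the diagonal residue is preserved under the wrap-around, which is exactly where $k\mid mk$ is indispensable. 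Everything else reduces to a short neighbor count, and the bound $k\geq 4$ is used only to guarantee both that $D_c$ is a fort and that two of the diagonals can be chosen mutually non-adjacent.
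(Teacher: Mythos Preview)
Your proof is correct and rests on the same underlying idea as the paper's: partition $V(K)$ into the diagonal classes $D_c=\{(a,b):b-a\equiv c\pmod k\}$ and build the barbell partition from them. Where the two diverge is in packaging and in the choice of $W_2$. The paper takes $W_1=D_0$ and $W_2=D_2\cup D_3\cup\cdots\cup D_{k-2}$ (with $R=D_1\cup D_{k-1}$) and then verifies the barbell conditions directly, carrying out a multi-case analysis to handle the wrap-around in each coordinate. You instead take $W_1=D_0$, $W_2=D_2$, observe once that each $D_c$ is a fort with neighbors only in $D_{c\pm 1}$, and then invoke Observation~\ref{fortbarb}. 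Your route is shorter and sidesteps the seam case analysis entirely by working modulo $k$ from the outset (where the divisibility $k\mid mk$ makes the residue $b-a\bmod k$ well defined across the $C_{mk}$-wrap); the paper's larger $W_2$ is not needed for the existence claim, and the fort formulation makes the verification a single neighbor count rather than a barbell check.
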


\begin{figure}[h]
  \centering
  \includegraphics[width=0.4\linewidth]{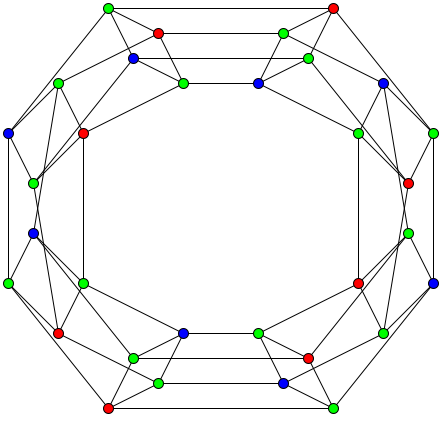}
  \caption{$C_4 \Box C_{8}$}
  \label{join-2}
\end{figure}

\setcounter{case}{0}

\begin{proof}

For $N \in \{k,mk\}$, enumerate the vertices of $C_N$ as $\{j\}_{j=1}^N$ such that given $j_1,j_2 \in V(C_N)$, we have that $j_1j_2 \in E(C_N)$ provided either $\vert j_1-j_2 \vert=1$ or $\{j_1,j_2\}=\{1,N\}$, and let $\{R,W_1,W_2\}$ be such that
\begin{itemize}
    \item $W_1=\left\{(j_1,j_2) \in V(C_k) \times V(C_{mk}): j_2=Mk+j_1 \text{ with }0\leq M \leq m-1\right\}$,
    \item $R=\left\{(j_1,j_2) \in V(C_k) \times V(C_{mk}): j_2=Mk+j_1+1 \text{ or } j_2=Mk+j_1+(k-1) \text{ with } -1\leq M \leq m-1\right\}$,
    \item $W_2=\left\{(j_1,j_2) \in V(C_k) \times V(C_{mk}): j_2=Mk+j_1+N \text{ with }-1\leq M \leq m-1 \text{ and } 2 \leq N \leq k-2\right\}$.
\end{itemize}

An example is shown in Figure \ref{join-2} where $R$ is formed by the green vertices, $W_1$ is the set of red vertices, and  $W_2$ is the set of blue vertices. We will now show that $\{R,W_1,W_2\}$ is a barbell partition of $G$. 
%\textcolor{blue}{Alternate version of the argument presented here in red so that no one ever has to read this mess again (I am quite happy with my result but the calculations and assessing them are a nuisance).-Houston}\textcolor{red}{By the nature of the construction it follows that 
%\begin{itemize}
    %\item $W_1,W_2,R \neq \emptyset$,
    %\item if $w \in W_1$ and $wv \in E(K)$, then $v \in R$,
    %\item if $r \in R$ and $rv \in E(K)$, then $v \not \in R$, and
    %\item if $r \in R$, then $\abs{N_K(r) \cap W_1}=2$.
%\end{itemize}  
%Since $K$ is $4$-regular, it follows that $\{R,W_1,W_2\}$ is a barbell partition of $K$.}

\vspace{0.1in}

\begin{addmargin}[0.87cm]{0cm}{\bf Claim 1:}
$W_1,W_2 \not = \emptyset$ and there do not exist vertices $w_1 \in W_1$ and $w_2 \in W_2$ such that $w_1w_2 \in E(K)$.
\vspace{0.1in}

\noindent {\em Proof of Claim 1.}
Since $(1,1) \in W_1$ and $(1,c) \in W_2$, it is clear that $W_1,W_2 \neq \emptyset$.  Next, note that $\{R,W_1,W_2\}$ is a partition of $V(G)$ and let $v \in W_1$ be arbitrary.  We will show that $N_K(v) \subset R$ and thus, since $R \cap W_2 = \emptyset$, that there do not exist vertices $w_1 \in W_1$ and $w_2 \in W_2$ such that $w_1w_2 \in E(K)$.  Since $v \in W_1$ it follows that $v$ is of the form $(j_1, j_2)$ with $j_2=Mk+j_1$ and $0\leq M \leq m-1$.  To show that $N_K(v) \subset R$, we will show that there exist distinct values $a,b$ such that $(j_1,a),(j_1,b) \in N_K(v) \cap R$ and distinct values $c,d$ such that $(c,j_2),(d,j_2) \in N_K(v) \cap R$, and thus since $K$ is 4-regular, that $N_K(v) \subset R$.

\begin{case}
$j_2 \not \in \{1,mk\}$
\end{case}

Since $j_2 \not \in \{1,mk\}$, it follows that $(j_1,j_2)(j_1,j_2+1),(j_1,j_2)(j_1,j_2-1) \in E(K)$.  It now remains to argue that $(j_1,j_2-1),(j_1,j_2+1) \in R$.  First, since $1< j_2 <mk$ and $1 \leq j_1 \leq k$, we have that $1 \leq j_2-1 < j_2+1 \leq mk$ and $1 \leq j_1 \leq k$.  Since $j_2=Mk+j_1$ for some $M$ with $0 \leq M \leq m-1$, $j_2+1=Mk+j_1+1$ and so $(j_1,j_2+1) \in R$.  In addition, since $M \geq 0$, $j_2-1=Mk+j_1-1=(M-1)k+j_1+(k-1)$ with $M-1 \geq -1$ and so $(j_1,j_2-1) \in R$.

\begin{case}
$j_2=1$
\end{case}

Since $j_2=1$, it follows that $(j_1,j_2)(j_1,2),(j_1,j_2)(j_1,mk) \in E(K)$.  It now remains to argue that $(j_1,2),(j_1,mk) \in R$.  First note, $1 \leq j_1 \leq k$.  Since $j_2=Mk+j_1=1$, we have that $M=0$ and $j_1=1$, so $2=Mk+j_1+1$ and thus $(j_1,2) \in R$.  In addition, $mk=(m-1)k+j_1+(k-1)$ and thus $(j_1,mk) \in R$.

\begin{case}
$j_2=mk$
\end{case}

This case follows from similar lines of reasoning as in Case 2.
%Since $j_2=mk$, it follows that $(c_{j_1},c_{j_2})(c_{j_1},c_{mk-1}),(c_{j_1},c_{j_2})(c_{j_1},c_{1}) \in E(K)$.  It now remains to argue that $(c_{j_1},c_{mk-1}),(c_{j_1},c_{1}) \in R$.  First note, $1 \leq j_1 \leq k$.  Since $j_2=Mk+j_1=mk$, we have that $M=m-1$ and $j_1=k$, so $mk-1=(m-2)k+j_1+(k-1)$.  Since $m \geq 1$ we have $m-2 \geq -1$, and thus $(c_{j_1},c_{mk-1}) \in R$.  In addition, $1=-k + k +1$ and thus $(c_{j_1},c_{1}) \in R$.

\vspace{0.1in}

\noindent In any case, there exist distinct values $a,b$ such that $(j_1,a),(j_1,b) \in N_K(v) \cap R$.  It now remains to show that there exist distinct values $c,d$ such that $(c,j_2),(d,j_2) \in N_K(v) \cap R$.  Before considering the cases, note that if $x=j_2-Mk-1$ or $x=j_2-Mk-(k-1)$ with $-1 \leq M \leq m-1$, then $(x,j_2) \in R$.

\setcounter{case}{0}

\begin{case}
$j_1 \not \in \{1,k\}$
\end{case}

Since $j_1 \not \in \{1,k\}$, it follows that $(j_1,j_2)(j_1+1,j_2),(j_1,j_2)(j_1-1,j_2) \in E(K)$.  It now remains to argue that $(j_1+1,j_2),(j_1-1,j_2) \in R$.  First, since $1< j_1 <k$ and $1 \leq j_2 \leq mk$, we have that $1 \leq j_1-1 < j_1+1 \leq k$ and $1 \leq j_2 \leq mk$.  Since $j_2=j_1+Mk$ for some $M$ with $0 \leq M \leq m-1$, $j_1-1=j_2-Mk-1$ and so $(j_1-1,j_2) \in R$.  In addition, $j_1+1=j_2-(M-1)k-(k-1)$ and since $M \geq 0$ we have $M-1 \geq -1$, and thus $(j_1+1,j_2) \in R$.  

\begin{case}
$j_1=1$
\end{case}

Since $j_1=1$, it follows that $(j_1,j_2)(2,j_2),(j_1,j_2)(k,j_2) \in E(K)$.  It now remains to argue that $(2,j_2),(k,j_2) \in R$.  First note, $1 \leq j_2 \leq mk$.  Since $j_2=j_1+Mk$ we have $j_2=Mk+1$ for some $M$ with $0 \leq M \leq m-1$.  So $2=j_2-((M-1)k)-(k-1)$ and $k=j_2-((M-1)k)-1$.  Since $M \geq 0$, we have $M-1 \geq -1$, and thus $(2,j_2),(k,j_2) \in R$.

\begin{case}
$j_1=k$
\end{case}

This case follows from similar lines of reasoning as in Case 2
%Since $j_1=k$, it follows that $(c_{j_1},c_{j_2})(c_{k-1},c_{j_2}),(c_{j_1},c_{j_2})(c_{1},c_{j_2}) \in E(K)$.  It now remains to argue that $(c_{k-1},c_{j_2}),(c_{1},c_{j_2}) \in R$.  First note, $1 \leq j_2 \leq mk$.  Since $j_2=j_1+Mk$ we have $j_2=(M+1)k$ for some $M$ with $0 \leq M \leq m-1$.  So $k-1=j_2-Mk-1$ and $1=j_2-Mk-(k-1)$, and thus $(c_{k-1},c_{j_2}),(c_{1},c_{j_2}) \in R$.

\end{addmargin}

\vspace{0.1in}

\begin{addmargin}[0.87cm]{0cm}{\bf Claim 2:}
For each $r \in R$ and $i \in \{1,2\}$, we have that $\left\vert N_K(r) \cap W_i \right\vert =2$.
\vspace{0.1in}

\noindent {\em Proof of Claim 2.}
Using techniques similar to those exhibited in Claim 1, the reader can check that for each $r \in R$ there exist values $a,b,c,d$ such that such that $(j_1,a),(b,j_2) \in N_K(r) \cap W_1$ and distinct values $c,d$ such that $(j_1,c),(d,j_2) \in N_K(r) \cap W_2$, with $r$ of the form $(j_1,j_2)$.  Thus since $K$ is 4-regular it follows that for each $r \in R$ and each $i \in \{1,2\}$ we have $\left\vert N_K(r) \cap W_i \right\vert = 2$.
\end{addmargin}
\vspace{0.1in}

\noindent Thus, $\{R, W_1, W_2\}$ is a barbell partition of $K$.
\end{proof}

\begin{thm} \label{thmtensnopen}
Let $G$ be a graph which is not a complete graph and $H$ be a graph with no pendant vertices such that $\abs{V(H)} \geq 2$.  Then $K=G \times H$ admits a barbell partition.
\end{thm}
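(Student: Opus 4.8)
The plan is to construct an explicit barbell partition directly, exploiting the product structure of neighborhoods in the tensor product. Since $G$ is not complete, there exist distinct vertices $g_1, g_2 \in V(G)$ with $g_1 g_2 \notin E(G)$. I would take the two ``fibers'' over these vertices, setting
\[
W_1 = \{g_1\} \times V(H), \qquad W_2 = \{g_2\} \times V(H), \qquad R = V(K) \setminus (W_1 \cup W_2).
\]
Because $\abs{V(H)} \geq 2$ and $g_1 \neq g_2$, the sets $W_1$ and $W_2$ are nonempty and disjoint, so $R = \left(V(G) \setminus \{g_1,g_2\}\right) \times V(H)$.

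Next I would verify there are no edges between $W_1$ and $W_2$. By the definition of the tensor product, a vertex $(g_1,h)$ can be adjacent to a vertex $(g_2,h')$ only if $g_1 g_2 \in E(G)$; since $g_1 g_2 \notin E(G)$, no such edge exists, giving condition (2) immediately.

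The crux is condition (3), and this is where the product structure of neighborhoods does the work: a vertex $(g,h)$ is adjacent to $(g_1,h')$ precisely when $g g_1 \in E(G)$ and $h h' \in E(H)$. Hence for any $r = (g,h) \in R$,
\[
\abs{N_K(r) \cap W_1} = \begin{cases} \deg_H(h) & \text{if } g g_1 \in E(G), \\ 0 & \text{otherwise}, \end{cases}
\]
and symmetrically for $W_2$ with $g_2$ in place of $g_1$. Because $H$ has no pendant vertices, $\deg_H(h) \neq 1$ for every $h \in V(H)$, so in either case this count is never $1$. This establishes condition (3) for both $W_i$, and thus $\{R, W_1, W_2\}$ is a barbell partition of $K$.

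The argument is short, and the only real obstacle is recognizing the correct partition; once the fiber choice $W_i = \{g_i\} \times V(H)$ is made, the two hypotheses are used in a clean and essentially independent fashion: the non-completeness of $G$ is exactly what separates $W_1$ from $W_2$, while the absence of pendant vertices in $H$ is exactly what rules out a single neighbor. Equivalently, one could phrase the conclusion through \Cref{fortbarb}, observing that each $W_i$ is a fort of $K$ (no vertex outside $W_i$ has exactly one neighbor in it, by the same degree computation) and that $\{W_1,W_2\}$ is a pair of separated forts.
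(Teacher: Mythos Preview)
Your proof is correct and follows essentially the same approach as the paper: the same fiber partition $W_i=\{g_i\}\times V(H)$ is used, and the verification of the three conditions proceeds in the same way. Your version is in fact slightly cleaner, since you use the hypothesis ``no pendant vertices'' directly as $\deg_H(h)\neq 1$ (allowing $0$ or $\geq 2$), whereas the paper first splits off the disconnected case in order to obtain $\delta(H)\geq 2$; that reduction is not actually needed.
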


%\begin{figure}[h]
  %\centering
  %\includegraphics[width=0.25\linewidth]{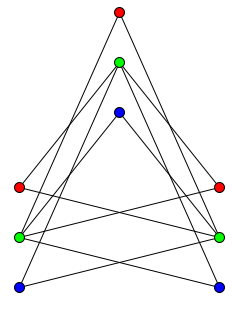}
%\end{figure}

\begin{proof}
If $G$ or $H$ are disconnected, then $K=G \times H$ is disconnected in which case by Observation \ref{discon}, $K$ admits a barbell partition.  So suppose both $G$ and $H$ are connected.  Since $H$ is connected and has no pendant vertices, it follows that $\delta(H) \geq 2$.  Since $G$ is not a complete graph there exist vertices $u,v \in V(G)$ such that $uv \not \in E(G)$.  Let
\[W_1=\left\{(u,h): h \in V(H)\right\}, W_2=\left\{(v,h): h \in V(H)\right\}, \text{ and } R=\left\{(g,h): g \in V(G) \setminus \{u,v\} \text{ and } h \in V(H)\right\}.\]
We will show that $\{R,W_1,W_2\}$ is a barbell partition of $K$.

\begin{addmargin}[0.87cm]{0cm}{\bf Claim 1:}
$W_1,W_2 \not = \emptyset$ and there do not exist vertices $w_1 \in W_1$ and $w_2 \in W_2$ such that $w_1w_2 \in E(K)$.

\vspace{0.1in}

\noindent {\em Proof of Claim 1.}
Since $V(H) \neq \emptyset$, it follows that $W_1,W_2 \not = \emptyset$.  Furthermore, since $uv \not \in E(G)$, there do not exist vertices $w_1 \in W_1$ and $w_2 \in W_2$ such that $w_1w_2 \in E(K)$.
\end{addmargin}

\vspace{0.1in}

\begin{addmargin}[0.87cm]{0cm}{\bf Claim 2:}
For each $r \in R$, $\left\vert N_G(r) \cap W_i \right\vert \neq 1$ for $i \in \{1,2\}$.
\vspace{0.1in}

\noindent {\em Proof of Claim 2.}
Let $r \in R$ be arbitrary.  Since $r \in R$, there exists $g \in V(G) \setminus \{u,v\}$ and $h \in V(H)$ such that $r=(g,h)$.  First, note that if $gu \not \in E(G)$, then $r$ will have no neighbors in $W_1$.  So suppose $gu \in E(G)$.  Since $H$ is such that $\delta(H)\geq 2$ there exists $h_1,h_2 \in V(H)$ such that $hh_1,hh_2 \in E(H)$.  Since $gu \in E(G)$ and $hh_1,hh_2 \in E(H)$, $r$ will be adjacent to both $uh_1$ and $uh_2$, each of which are members of $W_1$.  So $\left\vert N_K(r) \cap W_1 \right\vert \geq 2$.  In either case, for each $r \in R$ we have that $\left\vert N_K(r) \cap W_1 \right\vert \neq 1$.  Likewise, for each $r \in R$, $\left\vert N_K(r) \cap W_2 \right\vert \neq 1$.
\end{addmargin}

\vspace{0.1in}

Thus $\{R, W_1, W_2\}$ forms a barbell partition of $K=G \times H$.
\end{proof}

\begin{ex}
The requirement in Theorem \ref{thmtensnopen} that $H$ has no pendant vertices may seem unnecessary. However, we observe, by example, a pair of graphs $G$ and $H$ where $G$ is not complete, $H$ contains at least two vertices (each pendant vertices), but the product $G \times H$ does not admit a barbell partition.  This identifies that the criterion regarding pendant vertices is quite necessary.

\begin{figure}[h!]
	\begin{subfigure}[b]{.3\textwidth}
			\begin{center}
			\begin{tikzpicture}[scale=2,thick]
				\tikzstyle{every node}=[minimum width=0pt, inner sep=2pt, circle]
				\draw (0.16,0.02) node[draw] (1) { \tiny 1};
				\draw (0.16,1.1) node[draw] (2) { \tiny 2};
				\draw (-0.625,1.9) node[draw] (3) { \tiny 3};
				\draw (-1.41,1.1) node[draw] (4) { \tiny 4};
				\draw (-1.41,0.02) node[draw] (5) { \tiny 5};
				\draw  (1) edge (2);
				\draw  (2) edge (3);
				\draw  (4) edge (5);
				\draw  (1) edge (5);
				\draw  (3) edge (4);
				\draw  (2) edge (4);
			\end{tikzpicture}
		\caption*{$H$}
		\end{center}
	\end{subfigure}
	\begin{subfigure}[b]{.6\textwidth}
		\begin{center}
			\begin{tikzpicture}[scale=2,thick]
				\tikzstyle{every node}=[minimum width=0pt, inner sep=2pt, circle]
				\draw (-4,1) node[draw] (0) { \tiny 5,a};
				\draw (-4,0) node[draw] (1) { \tiny 1,b};
				\draw (-3,1) node[draw] (2) { \tiny 4,b};
				\draw (-3,0) node[draw] (3) { \tiny 2,a};
				\draw (-2,1) node[draw] (4) { \tiny 3,a};
				\draw (-2,0) node[draw] (5) { \tiny 3,b};
				\draw (-1,1) node[draw] (6) { \tiny 2,b};
				\draw (-1,0) node[draw] (7) { \tiny 4,a};
				\draw (0,0) node[draw] (8) { \tiny 5,b};
				\draw (0,1) node[draw] (9) { \tiny 1,a};
				\draw  (0) edge (2);
				\draw  (2) edge (3);
				\draw  (0) edge (1);
				\draw  (1) edge (3);
				\draw  (2) edge (4);
				\draw  (3) edge (5);
				\draw  (4) edge (6);
				\draw  (5) edge (7);
				\draw  (6) edge (7);
				\draw  (6) edge (9);
				\draw  (8) edge (9);
				\draw  (7) edge (8);
			\end{tikzpicture}
		\caption*{$K_2\times H$}
		\end{center}
	\end{subfigure}
	\caption{}
\end{figure}

Assume $H\times K_2$ has a barbell partition. Now, assume vertex $(3,a)$ is in $R$. Then, both $(4,b)$ and $(2,b)$ need to be either in $R$ or (without loss of generality) in $W_1$. If they are both in $R$. Then note that both $(5,a)$ and $(2,a)$ are either in $R$ or in $W_1$. In the first case, that would imply that $(1,b)$ and $(3,b)$ are in $R$ as well as $(4,a)$, $(5,b)$ and finally $(1,a)$, that is, the whole graph is in $R$, which is a contradiction. On the other hand if both $(5,a)$ and $(2,a)$ are in $W_1$, that would imply that $(3,b)$ is not in $W_2$ and that $(4,a)$ is either in $R$ or $W_1$ (not in $W_2$), then $(5,b)$ is not in $W_2$ and finally $(1,a)$ would not be in $W_2$ either and $W_2 =\emptyset$, a contradiction. Now, let us assume that both $(4,b)$ and $(2,b)$ are in $W_1$. This implies that the only vertex that can be in $W_2$ is $(3,b)$, a contradiction. Assuming that $(3,a)$ is in $W_1$ we also find a contradiction. Thus $H\times K_2$ has no barbell partition.
\end{ex}

Before we close the topic of tensor products we consider the tensor product of two path graphs and the tensor product of two complete graphs.  First we have the following. 

\begin{thm} \label{thmpathstens}
Let $m,k \in \mathbb Z^+$ such that $m,k \geq 2$, then $G=P_k \times P_m$ admits a barbell partition.
\end{thm}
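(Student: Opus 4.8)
The plan is to sidestep any explicit construction of the three parts and instead exploit the fact that paths are bipartite. The key observation is that the tensor product of two connected bipartite graphs is disconnected, so $P_k \times P_m$ is disconnected whenever $m,k \geq 2$; once this is established, the conclusion is immediate from Observation \ref{discon}, which guarantees that every disconnected graph admits a barbell partition.

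To establish disconnectedness I would proper-$2$-color each path by assigning to its vertex $i$ the color $c(i) = i \bmod 2$, and then attach to each vertex $(g,h)$ of the product the invariant $c(g)+c(h) \bmod 2$. Along any edge $(g_1,h_1)(g_2,h_2)$ of $P_k \times P_m$ we have $g_1g_2 \in E(P_k)$ and $h_1h_2 \in E(P_m)$, so $c(g_1)\neq c(g_2)$ and $c(h_1)\neq c(h_2)$; hence $c(g_1)+c(h_1) \equiv c(g_2)+c(h_2) \pmod 2$. Thus this quantity is constant on each component, and the two classes $W_1=\{(g,h): c(g)+c(h)\equiv 0\}$ and $W_2=\{(g,h): c(g)+c(h)\equiv 1\}$ have no edges between them. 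Taking $R=\emptyset$, the partition $\{R,W_1,W_2\}$ satisfies conditions (1) and (2) of a barbell partition outright, with condition (3) holding vacuously; equivalently, one simply observes that $P_k \times P_m$ is disconnected and invokes Observation \ref{discon}.

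The only point requiring care — and the place where the hypothesis $m,k \geq 2$ is actually used — is verifying that both classes $W_1$ and $W_2$ are nonempty, which is where condition (1) could fail. This is witnessed by $(1,1) \in W_1$ and $(1,2) \in W_2$, the latter existing precisely because each path has at least two vertices. I expect no genuine obstacle beyond this bookkeeping: the entire argument reduces to the single fact that both factors are bipartite, making the parity of $c(g)+c(h)$ a component invariant, so unlike the earlier prism-graph theorem there is no need for a case analysis over vertex positions.
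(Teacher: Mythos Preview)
Your proposal is correct and takes essentially the same approach as the paper: both proofs observe that $P_k \times P_m$ is disconnected and then invoke Observation~\ref{discon}. The only difference is that the paper simply asserts the disconnectedness in one line, whereas you spell out the standard parity argument explaining \emph{why} the tensor product of two bipartite graphs is disconnected; your added detail is sound and the bookkeeping on nonemptiness of $W_1$ and $W_2$ is handled correctly.
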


\begin{proof}
$G$ is disconnected, so it immediately admits a barbell partition.
\end{proof}

On the other hand, results concerning complete graphs are a bit more nuanced. It can be checked by exhaustion that $K_3 \times K_3$ does not admit a barbell partition.  However, as witnessed by the following theorem, there are instances in which $K_n \times K_m$ does admit a barbell partition.

\begin{thm}
Let $n,m \in \mathbb N$ with $n \geq 2$ and $m \geq 6$.  Then $G=K_n \times K_m$ admits a barbell partition.
\end{thm}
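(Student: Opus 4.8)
The plan is to exhibit an explicit barbell partition, exploiting the fact that in a tensor product of two complete graphs adjacency forces \emph{both} coordinates to differ. The first thing I would record is the adjacency rule: since $G=K_n \times K_m$ and both factors are complete, two vertices $(g_1,h_1)$ and $(g_2,h_2)$ are adjacent precisely when $g_1 \neq g_2$ \emph{and} $h_1 \neq h_2$. The immediate consequence I would isolate is that any two vertices sharing their first coordinate are non-adjacent; that is, each ``row'' $\{g\}\times V(K_m)$ is an independent set. This makes condition (2) of a barbell partition trivial to arrange: if both $W_1$ and $W_2$ are placed inside a single row, there can be no edges between them.

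Accordingly, I would fix a vertex $g_0 \in V(K_n)$, choose disjoint sets $S_1,S_2 \subseteq V(K_m)$, and set $W_1=\{g_0\}\times S_1$, $W_2=\{g_0\}\times S_2$, and $R=V(G)\setminus(W_1\cup W_2)$. Since $S_1$ and $S_2$ must be disjoint nonempty subsets of an $m$-element set and (as explained below) each must have size at least $3$, this is exactly where the hypothesis $m \geq 6$ enters: I would take $\abs{S_1}=\abs{S_2}=3$.

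The only genuine verification is condition (3), which I would check by splitting on the first coordinate of $r=(g,h)\in R$. If $g=g_0$, then $r$ lies in the same row as $W_1,W_2$ and hence has no neighbors there, so $\abs{N_G(r)\cap W_i}=0$. If $g\neq g_0$, then $r$ is adjacent to $(g_0,h')$ for every $h'\neq h$, whence $\abs{N_G(r)\cap W_i}$ equals $\abs{S_i}$ when $h\notin S_i$ and $\abs{S_i}-1$ when $h\in S_i$. Demanding that neither value be $1$ forces $\abs{S_i}\notin\{1,2\}$, i.e. $\abs{S_i}\geq 3$; this is the crux of the argument and the reason $m\geq 6$ is the sharp threshold for this construction. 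With $\abs{S_1}=\abs{S_2}=3$ the relevant counts lie in $\{2,3\}$, so condition (3) holds for every $r\in R$ and $\{R,W_1,W_2\}$ is a barbell partition.

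I do not expect any real obstacle here: once the coordinate-difference characterization of adjacency is in hand, the remainder is a short case check, and the hypotheses $n\geq 2$ and $m\geq 6$ are used only to guarantee that the ambient graph is the intended tensor product and that two size-$3$ blocks fit in a single row. If preferred, the same partition can be recast through Observation \ref{fortbarb}: the sets $W_1$ and $W_2$ as defined are readily seen to form a pair of separated forts, since no vertex outside $\{g_0\}\times S_i$ has exactly one neighbor in it.
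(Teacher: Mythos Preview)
Your proof is correct and follows essentially the same approach as the paper: both place $W_1$ and $W_2$ inside a single ``row'' $\{g_0\}\times V(K_m)$, use the tensor adjacency rule (both coordinates must differ) to get condition~(2) for free, and then count neighbors of $r=(g,h)$ in $W_i$ as $\abs{S_i}$ or $\abs{S_i}-1$ depending on whether $h\in S_i$. The only cosmetic difference is that the paper takes $S_1,S_2$ to be the two halves of $V(K_m)$ (sizes $\lceil m/2\rceil$ and $\lfloor m/2\rfloor$) rather than your two blocks of size~$3$; both choices make the bound $m\geq 6$ tight for this construction.
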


\begin{figure}[h]
  \centering
  \includegraphics[width=0.6\linewidth]{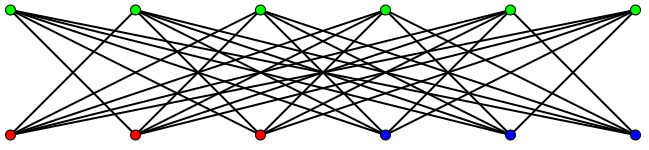}
  \label{join-3}
\end{figure}

\begin{proof}
Enumerate the vertices of $K_n = \{u_i\}_{i=1}^n$ and the vertices of $K_m = \{v_j\}_{j=1}^m$.  Let 
\[R=\left\{(u_i,v): i> 1 \text{ and }v \in V(K_m)\right\},\]
\[W_1=\left\{(u_1,v_j): j \leq \left\lceil \frac{m}{2} \right\rceil \right\}, \text{ and } W_2=\left\{(u_1,v_j): j > \left\lceil \frac{m}{2}\right\rceil \right\}.\]
We will show that $\{R,W_1,W_2\}$ is a barbell partition of $G$.

\begin{addmargin}[0.87cm]{0cm}{\bf Claim 1:}
$W_1,W_2 \not = \emptyset$ and there do not exist vertices $w_1 \in W_1$ and $w_2 \in W_2$ such that $w_1w_2 \in E(G)$.

\vspace{0.1in}

\noindent {\em Proof of Claim 1.}
Since $m > 2$, $W_1,W_2 \not = \emptyset$.  Furthermore, since $W_1 \cup W_2=\left\{(u_1,v): v \in V(K_m)\right\}$, it follows that $W_1 \cup W_2$ is an independent set of vertices in $G=K_n \times K_m$.
\end{addmargin}

\vspace{0.1in}

\begin{addmargin}[0.87cm]{0cm}{\bf Claim 2:}
For each $r \in R$, $\left\vert N_G(r) \cap W_i \right\vert \neq 1$ for $i \in \{1,2\}$.
\vspace{0.1in}

\noindent {\em Proof of Claim 2.}
Given $r \in R$, $r$ is of the form $(u_i,v_j)$ with $i>1$.  So $r$ is adjacent to every member of $W_1 \cup W_2$ except for $(u_1,v_j)$, and since $m \geq 6$, it follows that 
\[\left\vert N_G(r) \cap W_i \right\vert \geq \left\lfloor \frac{m}{2} \right\rfloor -1 \geq 2, \text{ for each } i \in \{1,2\}.\]
\end{addmargin}

\vspace{0.1in}

Thus $\{R, W_1, W_2\}$ forms a barbell partition of $G=K_n \times K_m$.
\end{proof}

We finish this section by considering one final graph product known as the strong product.

\begin{defn}
Let $G$ and $H$ be graphs.  Then the strong product of $G$ and $H$ denoted by $G \boxtimes H$ is the graph with vertex set $V(G \boxtimes H)=V(G) \times V(H)$ and edge set $E(G \boxtimes H)$ such that given $(g_1,h_1), (g_2,h_2) \in V(G \boxtimes H)$, $(g_1,h_1)(g_2,h_2) \in E(G \boxtimes H)$ provided either
\begin{itemize}
    \item $g_1=g_2$ and $h_1h_2 \in E(H)$ or
    \item $h_1=h_2$ and $g_1g_2 \in E(G)$ or
    \item $g_1g_2 \in E(G)$ and $h_1h_2 \in E(H)$.
\end{itemize}
\end{defn}

\begin{obs}
For $m,n\geq 1$ the graph $K_n \boxtimes K_m$ is isomorphic to $K_{n+m}$, and so does not admit a barbell partition.
\end{obs}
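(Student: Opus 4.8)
The plan is to reduce the statement to the single fact that a complete graph admits no barbell partition, after first verifying that $K_n \boxtimes K_m$ is itself complete. Note that the vertex set is $V(K_n) \times V(K_m)$, which has $nm$ vertices, so what I would actually prove is that every two distinct vertices are adjacent, i.e.\ that $K_n \boxtimes K_m \cong K_{nm}$; this is all the desired conclusion requires.

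First I would take two arbitrary distinct vertices $(g_1,h_1)$ and $(g_2,h_2)$ of $V(K_n) \times V(K_m)$ and split into the three possibilities for how they can differ: $g_1 = g_2$ (which forces $h_1 \neq h_2$), or $h_1 = h_2$ (which forces $g_1 \neq g_2$), or both $g_1 \neq g_2$ and $h_1 \neq h_2$. Because $K_n$ and $K_m$ are complete, any two distinct coordinates are joined by an edge, so in the first case the defining adjacency condition ``$g_1=g_2$ and $h_1h_2 \in E(K_m)$'' holds, in the second the condition ``$h_1=h_2$ and $g_1g_2 \in E(K_n)$'' holds, and in the third the condition ``$g_1g_2 \in E(K_n)$ and $h_1h_2 \in E(K_m)$'' holds. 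Since the only remaining situation, $g_1=g_2$ together with $h_1=h_2$, would make the two vertices coincide, every pair of distinct vertices is adjacent, and hence $K_n \boxtimes K_m$ is complete.

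Finally I would invoke that no complete graph admits a barbell partition: if $\{R,W_1,W_2\}$ were such a partition, then choosing $w_1 \in W_1$ and $w_2 \in W_2$ (possible since both parts are nonempty) produces two distinct vertices, which in a complete graph must be adjacent, contradicting the requirement that there be no edges between $W_1$ and $W_2$. There is essentially no genuine obstacle here; the only point needing care is confirming that the three adjacency clauses of the strong product jointly exhaust every pair of distinct vertices, which the case split above makes transparent.
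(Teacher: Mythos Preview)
Your argument is correct. The paper states this as an observation without proof, so there is nothing to compare; you have supplied exactly the routine verification one would expect. You were also right to flag that the product has $nm$ vertices and hence is isomorphic to $K_{nm}$ rather than $K_{n+m}$ as written in the statement --- the intended conclusion (that the graph is complete and therefore has no barbell partition) is unaffected.
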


\begin{thm}
Let $G$ be a graph which is not a complete graph and $H$ be a graph with more than one vertex.  Then $K=G \boxtimes H$ admits a barbell partition.
\end{thm}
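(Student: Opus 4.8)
The plan is to follow the same strategy as Theorem~\ref{thmtensnopen}: first dispose of the disconnected case, then exhibit an explicit partition when both factors are connected. Since the strong product contains the Cartesian product as a spanning subgraph, $G \boxtimes H$ is connected if and only if both $G$ and $H$ are connected; concretely, if $G$ has components $G_1,G_2$ then none of the three adjacency rules can join a vertex of $V(G_1)\times V(H)$ to a vertex of $V(G_2)\times V(H)$, and symmetrically for $H$. Hence if either $G$ or $H$ is disconnected, $K=G\boxtimes H$ is disconnected and Observation~\ref{discon} immediately supplies a barbell partition. I would therefore assume from here on that both $G$ and $H$ are connected; in particular, since $H$ is connected with more than one vertex, every vertex of $H$ has degree at least $1$, i.e.\ $\delta(H)\geq 1$.

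With both factors connected, I would reuse the construction from the tensor-product theorem. Because $G$ is not complete, choose distinct nonadjacent vertices $u,v\in V(G)$ and set $W_1=\{(u,h):h\in V(H)\}$, $W_2=\{(v,h):h\in V(H)\}$, and $R=V(K)\setminus(W_1\cup W_2)$. Nonemptiness of $W_1$ and $W_2$ is clear since $V(H)\neq\emptyset$. For the separation condition, take $(u,h_1)\in W_1$ and $(v,h_2)\in W_2$: the first adjacency rule of the strong product cannot apply since $u\neq v$, and neither the second nor the third rule can apply since $uv\notin E(G)$, so there is no edge between $W_1$ and $W_2$.

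The crux is the third barbell condition, and this is where the strong product behaves more favorably than the tensor product. Fix $r=(g,h)\in R$, so that $g\notin\{u,v\}$, and count $N_K(r)\cap W_1$. A vertex $(u,h')$ is adjacent to $(g,h)$ only through the second rule (yielding $(u,h)$ exactly when $gu\in E(G)$) or the third rule (yielding $(u,h')$ exactly when $gu\in E(G)$ and $h'\in N_H(h)$). Thus if $gu\notin E(G)$ the count is $0$, while if $gu\in E(G)$ the count is $1+\deg_H(h)$. Since $\delta(H)\geq 1$, in the latter case the count is at least $2$, so $\left\vert N_K(r)\cap W_1\right\vert\neq 1$; the identical argument handles $W_2$. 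Therefore $\{R,W_1,W_2\}$ is a barbell partition of $K$.

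The main thing to get right is this neighbor count: the extra diagonal edge $(g,h)(u,h)$ present in the strong product (but absent in the tensor product) contributes the summand $1$, so the total is $1+\deg_H(h)$ rather than $\deg_H(h)$. This is precisely why the hypothesis on $H$ can be relaxed to merely having more than one vertex --- equivalently, $\delta(H)\geq 1$ after the connected reduction --- whereas the tensor-product statement of Theorem~\ref{thmtensnopen} required no pendant vertices, i.e.\ $\delta(H)\geq 2$. The only point needing care is that the connected reduction legitimately forces $\delta(H)\geq 1$, which it does since a connected graph on at least two vertices has no isolated vertex.
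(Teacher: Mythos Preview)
Your proposal is correct and follows essentially the same approach as the paper: reduce to the connected case via Observation~\ref{discon}, pick nonadjacent $u,v\in V(G)$, set $W_i$ to be the fibre over $u$ (respectively $v$), and verify that each $r=(g,h)\in R$ has either zero or at least two neighbours in each $W_i$. Your explicit count $1+\deg_H(h)$ is a slightly sharper bookkeeping of the same observation the paper makes (namely that $(g,h)$ is adjacent to both $(u,h)$ and $(u,h')$ when $gu\in E(G)$ and $h'\in N_H(h)$), and your closing remark comparing the hypothesis to that of Theorem~\ref{thmtensnopen} is a nice addition.
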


%\begin{figure}[h]
  %\centering
  %\includegraphics[width=0.45\linewidth]{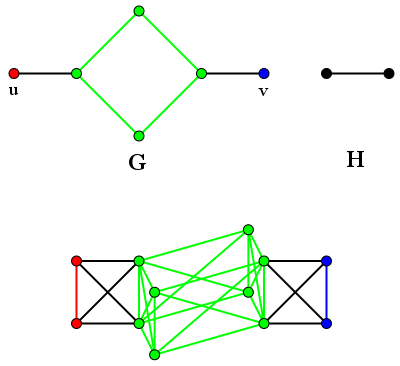}
  %\label{join-1}
%\end{figure}

\begin{proof}
If $G$ or $H$ are disconnected, then $K=G \boxtimes H$ is disconnected in which case by Observation \ref{discon}, $K$ admits a barbell partition.  So suppose both $G$ and $H$ are connected.  Since $G$ is not a complete graph there exist vertices $u,v \in V(G)$ such that $uv \not \in E(G)$.  Let
\[W_1=\left\{(u,h): h \in V(H)\right\}, W_2=\left\{(v,h): h \in V(H)\right\}, \text{ and } R=\left\{(g,h): g \in V(G) \setminus \{u,v\} \text{ and } h \in V(H)\right\}.\]
We will show that $\{R,W_1,W_2\}$ is a barbell partition of $K$.

\begin{addmargin}[0.87cm]{0cm}{\bf Claim 1:}
$W_1,W_2 \not = \emptyset$ and there do not exist vertices $w_1 \in W_1$ and $w_2 \in W_2$ such that $w_1w_2 \in E(K)$.

\vspace{0.1in}

\noindent {\em Proof of Claim 1.}
Since $V(H) \neq \emptyset$, it follows that $W_1,W_2 \not = \emptyset$.  Furthermore, since $u \neq v$ and $uv \not \in E(G)$, there do not exist vertices $w_1 \in W_1$ and $w_2 \in W_2$ such that $w_1w_2 \in E(K)$.
\end{addmargin}

\vspace{0.1in}

\begin{addmargin}[0.87cm]{0cm}{\bf Claim 2:}
For each $r \in R$, $\left\vert N_G(r) \cap W_i \right\vert \neq 1$ for $i \in \{1,2\}$.
\vspace{0.1in}

\noindent {\em Proof of Claim 2.}
Let $r \in R$ be arbitrary.  Since $r \in R$, there exists $g \in V(G) \setminus \{u,v\}$ and $h \in V(H)$ such that $r=(g,h)$.  First, note that if $gu \not \in E(G)$, then $r$ will have no neighbors in $W_1$.  So suppose $gu \in E(G)$.  Since $H$ is a connected graph on at least two vertices there exists $h' \in V(H)$ such that $hh' \in E(H)$.  Since $gu \in E(G)$ and $hh' \in E(H)$, $r$ will be adjacent to both $uh$ and $uh'$, each of which are members of $W_1$.  So $\left\vert N_K(r) \cap W_1 \right\vert \geq 2$.  In either case, for each $r \in R$ we have that $\left\vert N_K(r) \cap W_1 \right\vert \neq 1$.  Likewise, for each $r \in R$, $\left\vert N_K(r) \cap W_2 \right\vert \neq 1$.
\end{addmargin}

\vspace{0.1in}

Thus $\{R, W_1, W_2\}$ forms a barbell partition of $K=G \boxtimes H$.
\end{proof}

\begin{cor}
    Let $G$ be a graph which is not a complete graph and $H$ be a graph of more than one vertex.  Then $K=G \boxtimes H$ is not a member of $G^{SSP}$.
\end{cor}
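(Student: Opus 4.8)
The plan is to obtain this corollary as an immediate logical consequence of the theorem stated immediately above together with the barbell-partition obstruction recalled in Section 2. First I would invoke the preceding theorem, whose hypotheses coincide exactly with those of the corollary (namely that $G$ is not complete and $H$ has more than one vertex), to conclude that $K = G \boxtimes H$ admits a barbell partition. This is the substantive content, and it has already been established: the barbell partition is exhibited explicitly via $W_1=\{(u,h): h \in V(H)\}$, $W_2=\{(v,h): h \in V(H)\}$, and $R$ the remaining vertices, where $uv \notin E(G)$.

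Second, I would apply the lemma of \cite{SSPforGs} recorded at the start of Section 2, which asserts that any graph possessing a barbell partition contains a matrix $M \in \mathcal{S}(K)$ failing the SAP, and hence failing the SSP. As noted in the discussion following that lemma, this forces such a graph out of the class $G^{SSP}$. Composing the two facts gives $K \notin G^{SSP}$ directly.

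Since the combinatorial construction of the barbell partition was entirely carried out in the proof of the theorem, there is no genuine obstacle remaining; the corollary is a one-line deduction. The only point warranting a moment's attention is verifying that the hypotheses transfer verbatim between the theorem and the corollary, so that the existence of the barbell partition may be cited without any further checking. As this correspondence is exact, no additional argument is required and the proof is complete.
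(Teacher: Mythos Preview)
Your proposal is correct and matches the paper's intended approach exactly: the corollary is stated without proof precisely because it follows immediately from the preceding theorem (which produces the barbell partition) together with the Section~2 lemma from \cite{SSPforGs} that barbell partitions obstruct membership in $G^{SSP}$. There is nothing to add.
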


\section*{Acknowledgements}

This work started at the MRC workshop “Finding Needles in Haystacks: Approaches to Inverse
Problems using Combinatorics and Linear Algebra”, which took place in June 2021 with support from the
National Science Foundation and the American Mathematical Society. The authors are grateful to the
organizers of this meeting. In particular, this material is based upon work supported by the National Science Foundation under Grant Number DMS 1916439%1641020.

Shaun M. Fallat was also supported in part by an NSERC Discovery Research Grant, Application No.: RGPIN--2019--03934.
%%%%%%%%%%%%%%%%%%%%%%%%%%%%%%%%%%%%%%%%%%%%%%%%%%%%%%%%%%%%%%%%%%%%%%%%%


\begin{thebibliography}{100}



\bibitem{AACF} B. Ahmadi, F. Alinaghipour, M.S. Cavers, S. M. Fallat, K. Meagher, and S. Nasserasr, Minimum number of distinct eigenvalues of graphs, \emph{Electron. J. Linear Algebra} 26 (2013)   673--691.

\bibitem{AAB} J. Ahn, C. Alar, B. Bjorkman, S. Butler, J. Carlson, A. Goodnight, H. Knox, C. Monroe, M.C. Wigal, Ordered multiplicity inverse eigenvalue problem for graphs on six vertices,  \emph{Electron. J. Linear Algebra} 37 (2021), 316–358.


\bibitem{Arnold71} V. I. Arnol'd.  On matrices depending on parameters. (Russian) \emph{Uspehi Mat.\ Nauk,} 26:101--114, 1971.  (English translation in 
\emph{Russian Math.\ Surveys}, 26:29--43, 1971.)


\bibitem{BFH} W. Barrett, S. Fallat, H.T. Hall, L. Hogben, J.C.-H. Lin, B.L. Shader, Generalizations of the Strong Arnold Property and the minimum number of distinct eigenvalues of a graph, \emph{Electron. J. Combin} 24(2) (2017) 1--28.


\bibitem{IEPG2}
W.~Barrett, S.~Butler, S.~M. Fallat, H.~T. Hall, L.~Hogben, J.~C.-H. Lin,
  B.~L.~Shader, and M.~Young.
\newblock The inverse eigenvalue problem of a graph: {M}ultiplicities and
  minors.
\newblock {\em J. Combin. Theory Ser.~B,}, 142:276--306, 2020.


 \bibitem{FHLS}
 S. Fallat, H.T. Hall, J. C.-H. Lin, B. Shader. The bifurcation lemma for strong properties in the inverse eigenvalue problem of a graph. \emph{Linear Algebra Appl.} 648 (2022), 70–87. 

% \bibitem{cv} D. Cvetkovi\'c, P. Rowlinson,  S. Simi\'c, {\it An introduction to the theory of graph spectra\/}, Cambridge University Press, Cambridge, 2012.

 \bibitem{FH} S. Fallat, L. Hogben, The minimum rank of symmetric matrices described by a graph: A survey, \emph{Linear Algebra Appl.} 426 (2007) 558–582.


 \bibitem{Fal} S. Fallat, L. Hogben, Variants on the minimum rank problem: A survey II, Preprint, arXiv:1102-5142v1, 2011.
 
 

\bibitem{HLA2}	S. Fallat and L. Hogben. Minimum Rank, Maximum Nullity, and Zero Forcing Number of Graphs.  In \emph{Handbook of Linear Algebra},  2nd edition, L. Hogben editor, pp. 46-1\,--\,46-36, CRC Press, Boca Raton, 2014. 

 %\bibitem{GR} C. Godsil, G. Royle, {\it Algebraic Graph Theory\/}, Springer, 2001.

\bibitem{fort}
C.C. Fast and I.V. Hicks, Effects of vertex degrees on the zero-forcing number and
propagation time of a graph, \emph{Discr. Appl. Math.} 250 (2018) 215–226.

%\bibitem{Hog} L. Hogben, Spectral graph theory and the inverse eigenvalue problem of a graph, Electron. J. Linear Algebra, 14 (2005) 12--31.

%\bibitem{Hor} R. Horn, C. Johnson, Matrix Analysis, Cambridge Univ. Press, Cambridge, 1989.

\bibitem{HLS2022inverse} Hogben, L. and Lin, J.C.H. and Shader, B.L., \emph{Inverse Problems and Zero Forcing for Graphs,}
Mathematical Surveys and Monographs, https://books.google.com/books?id=cKF9EAAAQBAJ, American Mathematical Society, 2022.


\bibitem{LOS} R. H. Levene, P. Oblak, H. \v{S}migoc, A Nordhaus–Gaddum conjecture for the minimum number of distinct eigenvalues of a graph, \emph{Linear Algebra
Appl.} 564 (2019) 236--263.

\bibitem{SSPforGs} Lin, J. C. H., Oblak, P., and Šmigoc, H. (2020). The strong spectral property for graphs. {\em Linear Algebra Appl.}, 598, 68-91.


\bibitem{CdVF}
Y.~Colin de Verdi\`ere.
\newblock Sur un nouvel invariant des graphes et un crit\`ere de planarit\'e.
\newblock {\em J. Combin. Theory Ser.~B,}, 50:11--21, 1990.

\bibitem{CdV}
Y.~Colin de Verdi\`ere.
\newblock On a new graph invariant and a criterion for planarity.
\newblock In {\it Graph Structure Theory}, pp. 137--147, American Mathematical Society, Providence, RI, 1993.
 


\end{thebibliography}
\end{document}